\documentclass[12pt]{amsart}

\usepackage{amsfonts,amscd,amssymb,enumerate,enumitem,hyperref,cleveref}
\usepackage[normalem]{ulem}

\allowdisplaybreaks

\date{\today}

\newtheorem{thm}{Theorem}[section]
\newtheorem{cor}[thm]{Corollary}
\newtheorem{lem}[thm]{Lemma}

\newtheorem{prop}[thm]{Proposition}
\theoremstyle{definition}

\theoremstyle{remark}
\newtheorem{rem}[thm]{Remark}

\numberwithin{equation}{section}

\newcommand{\R}{\mathbb R}

\newcommand{\Na}{\mathbb N}
\newcommand{\He}{\mathbb H}

\newcommand{\C}{{\mathbb C}}

\newcommand{\s}{\mathcal{S}}

\newcommand{\Fs}{\mathcal{F}^\lambda(\C^{2n}) }

\renewcommand{\Re}{\operatorname{Re}}
\renewcommand{\Im}{\operatorname{Im}}
\newcommand{\tr}{\operatorname{tr}}

\usepackage{color}

\title[Quantum Hermite functions]
{Quantum Hermite functions \\
and Fourier transform of operators}

\author[R. Garg and S. Thangavelu]{Rahul Garg and Sundaram Thangavelu}

\address[R. Garg]{Department of Mathematics, Indian Institute of Science Education and Research Bhopal, Bhopal--462066, India.}
\email{rahulgarg@iiserb.ac.in}

\address[S. Thangavelu]{National Science Chair, Department of Mathematics, Indian Institute of Science, Bangalore--560012, India and Honorary Professor, University of Queensland, Brisbane, Australia.}
\email{veluma@iisc.ac.in}

\begin{document}

\begin{abstract}
We construct operator analogues of Hermite functions which form an orthonormal basis for the Hilbert space $ \mathcal{S}_2$ of Hilbert-Schmidt operators on $ L^2(\R^n).$ We use this orthonormal basis to define Fourier transform on $ \mathcal{S}_2 $ and study some of its basic properties.
\end{abstract}

\maketitle

\section{Introduction} \label{Sec-intro}
In this article we are concerned with operator analogues of Hermite functions forming an interesting orthonormal basis for the Hilbert space $ \s_2$  of all Hilbert-Schmidt operators on $ L^2(\R^n)$  equipped with  the inner product $ (S,T) = \tr(ST^\ast).$ It is well known that there is a one to one correspondence between $\s_2 $ and $ L^2(\R^{2n}) $ since every $ T \in \s_2 $ is an integral operator with kernel $ K \in L^2(\R^{2n}).$ A lesser known fact is that for every non-zero $ \lambda \in \R ,$ there is a unitary operator $ \pi_\lambda : L^2(\R^{2n}) \rightarrow \s_2 $ known as the Weyl transform. Consequently, every orthonormal basis for $ L^2(\R^{2n}) $ gives rise to an orthonormal basis for $ \s_2 $ and vice versa. For example, the complete orthonormal system of (scaled) Hermite functions $ \Phi_\mu^\lambda, \mu \in \mathbb N^{2n} $  leads to the  basis $ \pi_\lambda(\Phi_\mu^\lambda) .$ Another example is provided by the family of special Hermite functions  $ \Phi_{\alpha,\beta}^\lambda, \alpha, \beta \in \mathbb N^n.$  We do not have a good description of the operators $ \pi_\lambda(\Phi_\mu^\lambda) $ but the operators $ \pi_\lambda(\Phi_{\alpha,\beta}^\lambda) $ have  explicit descriptions in terms of the Hermite orthonormal basis  for $ L^2(\R^n).$ In this note we construct a new orthonormal basis $ \Psi_\mu^\lambda, \mu \in \mathbb N^{2n} $ for $ L^2(\R^{2n}) $ so that $ S_\mu^\lambda = \pi_\lambda(\Psi_\mu^\lambda) $ are very explicit having a definition analogous to that of Hermite functions, and hence we call them the operator analogues of Hermite functions or Quantum Hermite functions. Interestingly, the functions $ \Psi_\mu^\lambda $ turn out to be suitable dilations of the Hermite functions $ \Phi_\mu$ on $ \R^{2n}.$\\

We begin by recalling the definition of the normalised Hermite functions on $ \R^n.$ For any $ \alpha \in \mathbb N^n,$ let $ \partial^\alpha = \Pi_{j=1}^n \partial_j^{\alpha_j} $ where $ \partial_j = \frac{\partial}{\partial x_j} $ are the partial derivatives. We define the Hermite functions $ \Phi_\alpha $ and the Hermite polynomials $ H_\alpha$ by
\begin{equation} \label{hermite} 
\Phi_\alpha(x) = \left( \pi^{n/2} \alpha! 2^{|\alpha|}\right)^{-1/2} H_\alpha(x) \, e^{-\frac{1}{2}|x|^2} = (-1)^{|\alpha|} \left( \pi^{n/2} \alpha! 2^{|\alpha|}\right)^{-1/2} ( \partial^\alpha e^{-|x|^2})\, e^{\frac{1}{2}|x|^2} 
\end{equation} 
and for any $ \lambda \in \R, \lambda \neq 0,$ we let $ \Phi_\alpha^\lambda(x) = |\lambda|^{n/4} \Phi_\alpha(\sqrt{|\lambda|} x).$ These functions $\Phi_\alpha^\lambda $, $\alpha \in \mathbb N^n$, form an orthonormal basis for $ L^2(\R^n).$ The functions $ \Phi_\alpha^\lambda $ turn out to be eigenfunctions of the (scaled) Hermite operator $ H(\lambda) = -\Delta+\lambda^2 |x|^2$ with eigenvalues $ (2|\alpha|+n) |\lambda|.$ The Hermite operator has a self adjoint extension which generates a semigroup, called the Hermite semigroup, denoted by $ e^{-tH(\lambda)}, t >0.$ In defining the operator analogues of $ \Phi_\alpha^\lambda $ the role of the Gaussian $ e^{-|x|^2} $ will be played by  the operator $ e^{-H(\lambda)}.$ \\

On the space $ B(L^2(\R^n)) $ of all bounded linear operators on $ L^2(\R^n) $ we have a commuting family $ \mathcal{D}_j = \mathcal{D}_j(\lambda), j = 1, 2, \ldots, 2n $ of derivations. For $ \mu \in \mathbb N^{2n} $, we define $ \mathcal{D}^\mu = \Pi_{j=1}^{2n} \mathcal{D}_j^{\mu_j}$ and these operators play the role of the partial derivatives $ \partial^\alpha.$ 
In analogy with \eqref{hermite} we define  the operator analogues of the Hermite functions by 
 \begin{equation}\label{op-hermite}  S_\mu^\lambda =  \left( \pi^{n} \mu! 2^{|\mu|}\right)^{-1/2} ( \mathcal{D}^\mu e^{-H(\lambda)})\,  e^{\frac{1}{2}H(\lambda)}. 
\end{equation} 

\begin{thm} 
The family of operators $ S_\mu^\lambda, \mu \in \mathbb N^{2n}$ forms an orthonormal basis for $ \s_2.$
\end{thm}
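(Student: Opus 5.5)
The plan is to transfer the whole statement to $L^2(\R^{2n})$ through the Weyl transform $\pi_\lambda$. Concretely, I will show that $S_\mu^\lambda = \pi_\lambda(\Psi_\mu^\lambda)$, where $\Psi_\mu^\lambda$ is a suitable dilation of the Hermite function $\Phi_\mu$ on $\R^{2n}$, normalised so that the Plancherel constant of $\pi_\lambda$ is absorbed. Since $\pi_\lambda : L^2(\R^{2n}) \to \s_2$ is unitary after this normalisation, and the dilated functions $\Psi_\mu^\lambda$, $\mu\in\Na^{2n}$, form an orthonormal basis of $L^2(\R^{2n})$, the theorem follows at once. So everything reduces to an explicit computation of $\pi_\lambda^{-1}(S_\mu^\lambda)$.

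\textbf{Step 1: derivations become partial derivatives.} Here $\pi_\lambda(f)=\int f(z)\pi_\lambda(z)\,dz$ with $\pi_\lambda(x,u)\varphi(\xi)=e^{i\lambda(x\cdot\xi+\frac12 x\cdot u)}\varphi(\xi+u)$. Differentiating $\pi_\lambda(z)$ in $x_j$ or $u_j$ gives $\pi_\lambda(z)$ multiplied on the left or right by $i\lambda\xi_j$ or $\partial_{\xi_j}$. Hence each $\mathcal{D}_j$, which is an inner derivation (a commutator with $\xi_j$ or $\partial_{\xi_j}$, suitably scaled), satisfies $\mathcal{D}_j\pi_\lambda(f)=\pi_\lambda(\partial_j f)$ up to a fixed constant. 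I will fix the constants so that this holds exactly. Exponentiating, $e^{w\cdot \mathcal D}T=\pi_\lambda(w')\,T\,\pi_\lambda(w')^\ast$ for a linear function $w'$ of $w$, corresponding to translating $f$ by $w$.

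\textbf{Step 2: the Gaussian operators.} I will use Mehler's formula in the form $e^{-tH(\lambda)}=\pi_\lambda(p_t^\lambda)$ with
\[
p_t^\lambda(z)=c_n(\lambda)(\sinh t|\lambda|)^{-n}e^{-\frac{|\lambda|}{4}\coth(t|\lambda|)|z|^2}.
\]
By Step 1, $\mathcal D^\mu e^{-H(\lambda)}=\pi_\lambda(\partial^\mu p_1^\lambda)$, and $\partial^\mu p_1^\lambda$ is a Hermite polynomial times a Gaussian on $\R^{2n}$.

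\textbf{Step 3: right multiplication by the unbounded $e^{\frac12 H(\lambda)}$.} This is the main obstacle, since $e^{\frac12 H(\lambda)}$ is not of the form $\pi_\lambda$ of a function and one must check that the product is a Hilbert--Schmidt operator at all. I would handle it through generating functions. By Step 1,
\[
\sum_\mu \frac{w^\mu}{\mu!}\mathcal D^\mu e^{-H(\lambda)}=\pi_\lambda(w')\,e^{-H(\lambda)}\,\pi_\lambda(w')^\ast .
\]
Writing $e^{-H(\lambda)}=e^{-\frac12 H(\lambda)}e^{-\frac12 H(\lambda)}$, the product with $e^{\frac12 H(\lambda)}$ becomes
\[
\pi_\lambda(w')\,e^{-\frac12 H(\lambda)}\,\bigl(e^{-\frac12 H(\lambda)}\pi_\lambda(w')^\ast e^{\frac12 H(\lambda)}\bigr).
\]
The bracketed conjugation of a Heisenberg translation by the Hermite semigroup is explicit: for imaginary-time or complexified $w$ it is again a (complex) Heisenberg element times a scalar, which one sees from the action on creation and annihilation operators, namely $e^{-sH}A_j e^{sH}=e^{2s|\lambda|}A_j$ and $e^{-sH}A_j^\ast e^{sH}=e^{-2s|\lambda|}A_j^\ast$. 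Thus the generating operator is a product of Heisenberg elements with $e^{-\frac12 H(\lambda)}=\pi_\lambda(p_{1/2}^\lambda)$, i.e.\ $\pi_\lambda$ of a twisted translate of a Gaussian. Computing its symbol (a Gaussian times $e^{\langle w,z\rangle\text{-linear}-q(w)}$) and comparing with the Hermite generating function
\[
\sum_\mu \frac{w^\mu}{\mu!}H_\mu(z)=e^{2w\cdot z-w\cdot w}
\]
will identify $\pi_\lambda^{-1}(S_\mu^\lambda)$ as a dilate $\Psi_\mu^\lambda(z)=c\,\Phi_\mu(a_\lambda z)$. This also shows a posteriori that each $S_\mu^\lambda$ is Hilbert--Schmidt. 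The convergence of the series in $w$ is justified by the Gaussian decay in $z$, as for the classical generating function.

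\textbf{Step 4: conclusion.} With $\Psi_\mu^\lambda$ identified, I check that the factor $(\pi^n\mu!2^{|\mu|})^{-1/2}$ together with the dilation exactly matches the $L^2(\R^{2n})$ normalisation of $\Phi_\mu$ combined with the Plancherel constant $(2\pi)^n|\lambda|^{-n}$ of $\pi_\lambda$. I expect the bookkeeping of these constants to be delicate but routine. Orthonormality and completeness in $\s_2$ then follow from the unitarity of the normalised $\pi_\lambda$ and the completeness of $\{\Phi_\mu\}$ in $L^2(\R^{2n})$.
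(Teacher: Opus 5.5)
Your proposal has a genuine gap: Step 1 is false, Step 2 inherits the error, and with the derivations you describe the statement itself fails.

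\textbf{The dictionary in Step 1 is wrong.} Under the Weyl transform $\pi_\lambda(f)=\int f(\xi)\pi_\lambda(\xi)\,d\xi$, a commutator with $Q_j$ or $P_j$ corresponds to \emph{multiplying} $f$ by a coordinate, not to differentiating it. This is Proposition 2.1: $[\pi_\lambda(g),Q_j]=\pi_\lambda(u_jg)$ and $i[\pi_\lambda(g),P_j]=\pi_\lambda(\lambda x_jg)$. Differentiation corresponds instead to a symmetrised product, $\pi_\lambda(\partial_{x_j}f)=-\frac{i\lambda}{2}(Q_j\pi_\lambda(f)+\pi_\lambda(f)Q_j)$, which is not a derivation. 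The dictionary you have in mind holds for the Weyl \emph{symbol}, which is a symplectic Fourier transform of $f$, not for $f$ itself. So no choice of constants gives $\mathcal D_j\pi_\lambda(f)=\pi_\lambda(\partial_jf)$. In fact $\mathcal D^\mu e^{-H(\lambda)}$ is $\pi_\lambda$ of a monomial in linear forms times $p_1^\lambda$, not of $\partial^\mu p_1^\lambda$, and $e^{w\cdot\mathcal D}$ acts on $f$ as a modulation, not a translation. The operator-level computation in Step 3 does not depend on this dictionary, so it could be salvaged. That requires conjugating by $e^{-\frac12 H(\lambda)}$ via $e^{-sH(\lambda)}A_je^{sH(\lambda)}=e^{2s|\lambda|}A_j$, and writing $\pi_\lambda(w')^{-1}$ rather than $\pi_\lambda(w')^\ast$ since $w'$ is complex. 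But it only works once the derivations are the right ones, and that is where the real gap lies.

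\textbf{The choice of derivations is the content of the theorem.} With your derivations (commutators with $\xi_j$ or $\partial_{\xi_j}$ separately, rescaled), $S_\mu^\lambda e^{-\frac12H(\lambda)}$ is a multiple of $\pi_\lambda(x^{\mu'}u^{\mu''}p_1^\lambda)$ for $\mu=(\mu',\mu'')$. Hence $B_\lambda\pi_\lambda^{-1}(S_\mu^\lambda)$ is a multiple of $z^{\mu'}w^{\mu''}$. These monomials are not orthogonal in $\Fs$, because the weight contains the factor $e^{\lambda\Im(z\cdot\bar w)}$. By Proposition 3.1, $\langle z_j,w_j\rangle_{\Fs}$ is a multiple of the $\mathcal F(\C^{2n})$ inner product of $c_{\lambda/2}z_j+is_{\lambda/2}w_j$ and $c_{\lambda/2}w_j-is_{\lambda/2}z_j$ (these are $z_j\circ\sigma_\lambda^{-1}$ and $w_j\circ\sigma_\lambda^{-1}$ up to $\delta_\lambda$). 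That inner product equals $2i\,c_{\lambda/2}s_{\lambda/2}\|z_j\|^2\neq 0$, so $S_{e_j}^\lambda$ and $S_{e_{j+n}}^\lambda$ would not even be orthogonal.

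Orthonormality holds only for specific complex combinations, $D_j=i(c_{\lambda/2}\partial_{P_j}-\lambda s_{\lambda/2}\partial_{Q_j})$ and $D_{j+n}=\lambda c_{\lambda/2}\partial_{Q_j}-s_{\lambda/2}\partial_{P_j}$, with normalisation $c_\mu^\lambda(\lambda s_\lambda)^{-|\mu|/2}$. These act as multiplication of $f$ by the components of $(x,u)_\lambda$, that is, by $\sigma_\lambda$ up to $\delta_\lambda$. This is the linear map carrying $\mathcal F(\C^{2n})$ isometrically onto $\Fs$: a unimodular change of variables with $w_0(\sigma_\lambda\zeta)$ proportional to $w_\lambda(\zeta)$. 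Finding $\sigma_\lambda$ is the key idea, and your step ``fix the constants'' hides it.

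\textbf{How the paper proceeds.} The paper gets orthonormality for free from $\sigma_\lambda$. It defines $\Psi_\mu^\lambda=B_\lambda^\ast T_\lambda\zeta_\mu$ and composes the unitaries $T_\lambda$, $B_\lambda^\ast$ and $\pi_\lambda$. Only afterwards does it verify the derivation formula and the identity $\Psi_\mu^\lambda=D_\lambda\Phi_\mu$. If you insert the correct $D_j$, your generating-function argument becomes a legitimate, more direct alternative that bypasses the twisted Fock space.
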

 
The operators $ S_\mu^\lambda $ occur as the Weyl transforms of certain functions $ \Psi_\mu^\lambda \in L^2(\R^{2n})$ which  form an orthonormal basis for $ L^2(\R^{2n}).$ We will show that these functions are nothing but a suitable dilation of the Hermite functions on $ \R^{2n}.$ In fact
\begin{equation}
\Psi_\mu^\lambda(x,u) =  ( (\lambda/2) \coth (\lambda/2) )^{n/2} \, \Phi_\mu( \sqrt{ (\lambda/2) \coth (\lambda/2)}(x,u)).
\end{equation}
Actually we define the functions $ \Psi_\mu^\lambda $ first and $ S_\mu^\lambda $ will be defined as $ \pi_\lambda(\Psi_\mu^\lambda).$ We will show that $ S_\mu^\lambda $ satisfies equation \eqref{op-hermite}. By calculating the non-commutative derivations of $ e^{-H(\lambda)} $ it can be shown that 
\begin{equation}
S_\mu^\lambda  =  P_\mu^\lambda(A(\lambda), A^\ast(\lambda)) \, e^{-\frac{1}{2} H(\lambda)} 
\end{equation}
where $P_\mu^\lambda(A(\lambda), A^\ast(\lambda))$ is a non-commutative polynomial in the vectors $ A(\lambda)=(A_j(\lambda))$ and $ A^\ast(\lambda)=(A_j^\ast(\lambda)) $ of annihilation and creation operators.  The construction of $ \Psi_\mu^\lambda $ and hence of $ S_\mu^\lambda $ are motivated by a search for the twisted analogue of the following  fact which we recall now.\\ 
 
If we let $ \mathcal{F}(\C^n) $ stand for the Fock space consisting of all entire functions $ F $ that are square integrable with respect  to the Gaussian measure $ d\nu_n(z)= (4\pi)^{-n/2}\,e^{-\frac{1}{2}|z|^2} \, dz ,$ then the Bargmann transform $ B $ takes $ L^2(\R^n)$ unitarily onto $ \mathcal{F}(\C^n) .$  It is well known  that $ B $ takes the Hermite functions $ \Phi_\alpha $ onto the monomials $ \zeta_\alpha(z) = ( \pi^{n/2} 2^{|\alpha|} \alpha !)^{-1/2} \, z^\alpha.$  There is a family of (twisted) Fock spaces $ \mathcal{F}^\lambda(\C^{2n}) $ indexed by  $ \lambda \in \R $ which coincide with the classical Fock space $\mathcal{F}(\C^{2n})$ when  $ \lambda =0.$ These spaces are related  to the twisted Bergman spaces $ \mathcal{B}_t^\lambda(\C^{2n}) $ studied in \cite{KTX} which appear naturally in connection with the heat kernel transform or the Segal-Bargmann transform on the Heisenberg group $ \He^n.$  These spaces are defined in terms of  the weight function
$$ w_\lambda(z,w) = 4^n \, c_{n,\lambda} \, e^{\lambda \Im( z\cdot \bar{w})} \, e^{-\frac{1}{2} \lambda (\coth \lambda) |(z,w)|^2}, \quad (z,w) \in \C^{2n}.$$
Thus $ \mathcal{F}^\lambda(\C^{2n}) $ is  the space of all entire functions $ F $ on $\C^{2n} $ which are square integrable with respect to the measure $ w_\lambda(z,w) \, dz \, dw.$ 
We remark that the constant $ c_{n,\lambda} $ is chosen in such a way that  $ c_{n,0} = (4\pi)^{-n}$ but we take $ w_0(z,w) = (4\pi)^{-n}\, e^{-\frac{1}{2}  |(z,w)|^2}$  so that $ d\nu_{2n}(z,w) = w_0(z,w) dz\, dw.$ Note that $ \mathcal{F}^0(\C^{2n}) = \mathcal{F}(\C^{2n})$ is just the standard Fock space of entire functions on $ \C^{2n} .$ \\

The analogue of $ B $ is the twisted Bargmann transform $ B_\lambda ,$ studied in \cite{GT}, which takes $ L^2(\R^{2n}) $ unitarily onto $ \Fs.$   As discovered by the authors  of  \cite{BD}, there  exists  a very simple relation  between the spaces $\Fs $ and $\mathcal{F}(\C^{2n}).$  There is an invertible linear transformation $ \sigma_\lambda : \C^{2n} \rightarrow \C^{2n} $ such that $ F \circ \sigma_\lambda \in \Fs $ whenever $ F \in \mathcal{F}(\C^{2n}).$ In view of this $ \zeta_\mu \circ \sigma_\lambda \in \Fs $ and hence there exists $ \Psi_\mu^\lambda \in L^2(\R^{2n}) $ such that $ B_\lambda \Psi_\mu^\lambda = \zeta_\mu \circ \sigma_\lambda.$ Since $ \zeta_\mu, \mu \in \mathbb N^{2n} $ is an orthonormal basis for $ \mathcal{F}(\C^{2n}),$ it follows that $ \Psi_\mu^\lambda, \mu \in \mathbb N^{2n} $ forms an orthonormal basis for $ L^2(\R^{2n}) $ and hence $ S_\mu^\lambda = \pi_\lambda(\Psi_\mu^\lambda) $ an orthonormal basis for $ \s_2.$\\

One of the several interesting properties of the Fock space $ \mathcal{F}(\C^n) $ is the interplay  between the Fourier transform $ \mathcal{F}$ on $ \R^n $ and the Bargmann transform. More precisely, if $ U $ is the unitary operator defined on the Fock space by $ UF(z) = F(-iz),$ then $ U \circ B = B \circ \mathcal{F} .$ Motivated by this relation, in Section 3.5 we define  Fourier transform of  Hilbert-Schmidt operators. In order to do this, we make use of the Gauss-Bargmann transform $ \mathcal{G}_\lambda = B_\lambda \circ \pi_\lambda^\ast$ which takes $ \mathcal{S}_2 $ unitarily onto $ \Fs $. Once again the map $ U: \Fs \rightarrow \Fs $ defined by $ UF(z,w) = F(-iz,-iw) $ is unitary and hence  $ \mathcal{F} $ defined by the relation $ U \circ \mathcal{G}_\lambda = \mathcal{G}_\lambda \circ \mathcal{F} $ defines a unitary operator $ \mathcal{F} : \mathcal{S}_2 \rightarrow \mathcal{S}_2 $ which we call the Fourier transform of operators. We establish several basic properties of this Fourier transform. In particular, we show that the operators $ S_\mu^\lambda $ are eigenvectors for this Fourier transform, viz. $ \mathcal{F}(S_\mu^\lambda) = (-i)^{|\mu|}  S_\mu^\lambda.$ This is the analogue of the well known property $ \mathcal{F}(\Phi_\alpha) =(-i)^{|\alpha|} \Phi_\alpha$  satisfied by the Hermite functions.\\

The plan of the paper is as follows. In Section 2 we  set up the notation and recall  the preliminary material, especially  results on twisted Fock spaces and their relation to the standard Fock spaces. In Section 3 we introduce the operator analogues of the Hermite functions and establish their basic properties. In subsection 3.5 we define Fourier transform of operators and bring out the analogy between classical Fourier transform and the new Fourier transform on $ \mathcal{S}_2.$ As an example we state and prove an exact analogue of Hardy's theorem for the Fourier transform of operators. We also describe Fourier transform of radial operators and list some important problems to be studied in a forthcoming paper.

\section{ Preliminaries} \label{Sec-prelim}

\subsection{Hermite functions, Bargmann transform and the Fock space} 
For all the properties of the Bargmann transform we  use in this article, we refer to the paper \cite{B} of Bargmann. For the results on Hermite functions, we refer to Szego \cite{GS} and the monograph \cite{ST-princeton}.

\medskip 
Consider  the monomials  $ \zeta_\alpha(w) =  ( \pi^{n/2} 2^{|\alpha|} \alpha !)^{-1/2} w^\alpha,\, w \in \C^n .$  They form an orthonormal system in $ L^2(\C^n, d\nu_n) $ where  $ d\nu_n(w) = (4\pi)^{-n/2}\, e^{-\frac{1}{2}|w|^2} \, dw.$ Then the Hermite functions $ \Phi_\alpha $ defined in \eqref{hermite} satisfy
 the generating function identity 
 \begin{equation}\label{gen-id} 
   \sum_{\alpha \in \Na^n} \Phi_\alpha(x)\, \zeta_\alpha(w)  = \pi^{-n/2} e^{\frac{1}{4}w^2} e^{-\frac{1}{2}(x-w)^2}.
 \end{equation}
This can be easily verified in the one dimensional case by Taylor expanding  the   entire function $  g_x(w) = e^{-w^2+2xw} $ and noting that
 $$ e^{-w^2+2xw} = \sum_{k=0}^\infty \frac{1}{k!} H_k(x)\, w^k .$$
Since $ g_x(w) = e^{x^2} e^{-(x-w)^2} $, it is clear that $ \frac{d^k}{dw^k}g_x(0) = H_k(x)$ is a polynomial of degree $ k.$   The higher dimensional case follows from the one dimensional case as $ H_\alpha(x) = \Pi_{j=1}^nH_{\alpha_j}(x_j).$
 It is well  known that this family $ \Phi_\alpha, \alpha \in \Na^n $ forms an orthonormal basis for $ L^2(\R^n).$   We can  use the above identity to define the Bargmann transform and deduce  many of its properties easily. \\
 
  For $ f \in L^2(\R^n) $ we define its Bargmann transform $ Bf $  by integrating $f $ against the kernel on the right hand side of the generating function above. Thus
 $$ Bf(w) = \pi^{-n/2} e^{\frac{1}{4} w^2} \int_{\R^n} f(x) \,e^{-\frac{1}{2}(x-w)^2} dx =  \pi^{-n/2} e^{-\frac{1}{4} w^2} \int_{\R^n} f(x) \,e^{-\frac{1}{2}x^2}\, e^{x\cdot w} dx.$$
 In view of the above identity, it is clear that $ B\Phi_\alpha = \zeta_\alpha $  and we have the expansion 
 $$ Bf(w) =  \sum_{\alpha \in \Na^n} (f, \Phi_\alpha)\, \zeta_\alpha(w). $$ 
 As the functions $ \zeta_\alpha $ form an orthonormal system in $ L^2(\C^n, d\nu),$ from the Plancherel theorem for the Hermite expansions we deduce that
 $$ \int_{\C^n} |Bf(w)|^2\, d\nu_n(w) =  \sum_{\alpha \in \Na^n} |(f, \Phi_\alpha)|^2\, = \int_{\R^n} |f(x)|^2\, dx.$$
 Thus for $ f \in L^2(\R^n) $ its Bargmann transform $ Bf $ is an entire function which is square integrable with respect to $ d\nu_n.$ \\
 
We define the Fock space $ \mathcal{F}(\C^n) $ as the subspace of $ L^2(\C^n, d\nu) $ consisting of entire functions. Then it is clear that $ B: L^2(\R^n) \rightarrow \mathcal{F}(\C^n) $ is an isometry.  It can be shown that any $ F \in \mathcal{F}(\C^n) $ is of the form $ Bf $ for a unique $ f \in L^2(\R^n).$ This makes $ B $ unitary  whose inverse is given by its adjoint $ B^\ast.$  The reproducing kernel for $ \mathcal{F}(\C^n) $ is given by $ \pi^{-n/2} e^{\frac{1}{2}z \cdot \bar{w}}.$ This means that for any $ F \in \mathcal{F}(\C^n) $ we have
$$ \pi^{-n/2}\, \int_{\C^n} F(w)\, e^{\frac{1}{2}z \cdot \bar{w}}\, d\nu_n(w) \, dw = F(z).$$

\subsection{Heisenberg group and the Weyl transform} We begin by recalling some well known facts about the Heisenberg groups $ \He^n $ and their representations. For more details we refer to  the monographs \cite{GBF, ST-uncertainty}. The group $  \He^n$ is  just $ \R^{2n} \times \R  =  \C^n \times \R $ equipped with the group law
$$ (z,t)(w,s) =  (z+w, t+s+\frac{1}{2} \Im(z \cdot \bar{w})).$$
Note that  if we let $ z = x+iy, w = u+iv,$ then $ \Im(z \cdot \bar{w}) =(u \cdot y-x \cdot v)$ is nothing but the symplectic form $ [\xi,\eta] $ on $ \R^{2n}$ between $ \xi = (x,u)$ and $ \eta =(y,v).$ Therefore, the group law can also be written in the form
$$ (\xi,t)(\eta,s) =(\xi+\eta, t+s+\frac{1}{2}[\xi,\eta]).$$
The group $ \He^n $ is a step two nilpotent Lie group, it is therefore unimodular and the Haar measure is simply given by the Lebesgue measure on $ \R^{2n+1}.$  The  spaces $ L^p(\He^n) $ are defined with respect to the Lebesgue measure. The convolution between two functions $ f, g \in L^1(\He^n) $ is defined in the usual way
$$ f \ast g(\xi,t) = \int_{\He^n} f ( (\xi,t)(\eta,s)^{-1}) \, g(\eta,s)\, d\eta\, ds.$$

The convolution on the Heisenberg group gives rise to a family of convolutions for functions on $ \R^{2n} $ known as twisted convolutions. To define them let us set up some notation.
For a function $ f \in L^1(\He^n) $ let us denote by $ f^\lambda$ the inverse Fourier transform (upto a constant multiple) of $ f $ in the central variable given by 
$$ f^\lambda(\xi) = \int_{-\infty}^\infty  f(\xi,t)\, e^{i\lambda t}\, dt.$$
Since $ (\eta,s)^{-1} = (-\eta,-s) $ a simple calculation shows that
$$ (f \ast g)^\lambda(\xi) = \int_{\R^{2n}} f^\lambda(\xi-\eta)\, g^\lambda(\eta)\,e^{i\frac{\lambda}{2} [\xi,\eta]}\, d\eta .$$
The right hand side defines what is known as the $ \lambda$-twisted convolution of $ f^\lambda $ with $ g^\lambda $, and is denoted by $ f^\lambda \ast_\lambda g^\lambda.$
By defining  the $\lambda$-twisted translation $ \tau_\lambda(\eta)$ by the equation 
\begin{align} \label{def:twisted-translation}
\tau_\lambda(\eta)g^\lambda(\xi) = g^\lambda(\xi-\eta)\, e^{-i\frac{\lambda}{2} [\xi,\eta]}, 
\end{align}
we see that  the twisted convolution $ f^\lambda \ast_\lambda g^\lambda $  is given by the following integral
\begin{equation}\label{t-con} f^\lambda \ast_\lambda g^\lambda(\xi) = \int_{\R^{2n}} f^\lambda(\eta) \,  \tau_\lambda(\eta) g^\lambda(\xi) \, d\eta. \end{equation}
Taking partial Fourier transform in the central variable is a useful technique which reduces problems on  $ \He^n$ to problems on $ \R^{2n}.$

\medskip 
We now briefly recall the representation theory of $ \He^n$ which is needed in order to define the group Fourier transform. For each $ \lambda \in \R, \lambda \neq 0,$ there is an irreducible unitary representation $ \pi_\lambda $ of $ \He^n$ on $ L^2(\R^n) $ explicitly given by 
$$ \pi_\lambda(x,y,t)f(\xi) = e^{i\lambda t} e^{i\lambda(x \cdot \xi +\frac{1}{2} x\cdot y)} f(\xi+y), \quad f \in L^2(\R^n).$$
We use the notation $ \pi_\lambda(x,y) $ for the operator $ \pi_\lambda(x,y,0).$  The Fourier transform of a function $ f \in L^1(\He^n) $ is the operator valued function 
$$ \hat{f}(\lambda) =: \int_{\He^n} f(\xi,t) \, \pi_\lambda(\xi,t)\,\, d\xi\, dt.$$
Recalling the definition of $ \pi_\lambda $ and $ f^\lambda $ we see that $ \hat{f}(\lambda) = \pi_\lambda(f^\lambda) $ where 
$$ \pi_\lambda(f^\lambda) =: \int_{\R^{2n}} \, f^\lambda(\xi)\, \pi_\lambda(\xi)\, d\xi $$
is known as the Weyl transform of $ f^\lambda.$ Properties of the Fourier transform $ f \rightarrow \hat{f} $ on $ \He^n$ are proved by studying the Weyl transform. \\

The Weyl transform initially defined on $ L^1(\R^{2n}) \cap  L^2(\R^{2n}) $  has an extension to the whole of $ L^2(\R^{2n})$ as a Hilbert-Schmidt operator.
It takes $ L^2(\R^{2n}) $ onto the the space $ \mathcal{S}_2 $ of Hilbert -Schmidt operators acting on $ L^2(\R^n).$ The inversion formula for the Weyl transform reads as
\begin{equation}\label{weyl-inversion}
f(x,u) = (2\pi)^{-n/2}\, |\lambda|^{n/2}\, \tr (\pi_\lambda(-x,-u) \pi_\lambda(f)).
\end{equation}
It can be shown that  $ \pi_\lambda(f \ast_\lambda g) = \pi_\lambda(f)\, \pi_\lambda(g) .$ 
Let us consider  the position and momentum operators of quantum mechanics,
$$ Q_j \varphi(\xi)= \xi_j\, \varphi(\xi) \quad \text{and} \quad P_j\varphi(\xi) = \partial_{\xi_j}\varphi(\xi)$$ 
acting on suitable functions $\varphi$ on $\R^n$. Also, for any two operators $A$ and $B$, let $ [A,B] $ stand for the commutator $ AB-BA.$ The relations stated in the next proposition play an important role in this article.

\begin{prop} For any $ g \in L^2(\R^{2n}) $ for which $ x_jg, u_j g \in L^2(\R^{2n}) $ we have 
$$  \pi_\lambda( \lambda x_jg) = i [ \pi_\lambda(g), P_j],\,\,\,\, \pi_\lambda(  u_jg) = [ \pi_\lambda(g), Q_j]. $$
\end{prop}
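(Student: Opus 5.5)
The plan is to prove the two identities first at the level of the individual operators $\pi_\lambda(x,u)$, then integrate against $g$, and finally remove regularity assumptions by density. Everything rests on a direct computation with the explicit formula
$$ \pi_\lambda(x,u)\varphi(\xi) = e^{i\lambda(x\cdot \xi + \frac{1}{2} x\cdot u)}\, \varphi(\xi+u), $$
applied to a Schwartz function $\varphi \in \s(\R^n)$, which lies in the domains of $P_j$ and $Q_j$ and is preserved by $\pi_\lambda(x,u)$.

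For the first identity, compute $\pi_\lambda(x,u)P_j\varphi(\xi) = e^{i\lambda(x\cdot \xi + \frac{1}{2}x\cdot u)}(\partial_j\varphi)(\xi+u)$. Differentiating the product in $\xi_j$ gives
$$ P_j\pi_\lambda(x,u)\varphi(\xi) = i\lambda x_j\, \pi_\lambda(x,u)\varphi(\xi) + e^{i\lambda(x\cdot \xi + \frac{1}{2}x\cdot u)}(\partial_j\varphi)(\xi+u). $$
Hence $[\pi_\lambda(x,u),P_j] = -i\lambda x_j\,\pi_\lambda(x,u)$ on $\s(\R^n)$, that is, $i[\pi_\lambda(x,u),P_j] = \lambda x_j\,\pi_\lambda(x,u)$.

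For the second identity, $\pi_\lambda(x,u)Q_j\varphi(\xi) = (\xi_j+u_j)\,e^{i\lambda(\cdots)}\varphi(\xi+u)$, while $Q_j\pi_\lambda(x,u)\varphi(\xi) = \xi_j\,e^{i\lambda(\cdots)}\varphi(\xi+u)$. Therefore $[\pi_\lambda(x,u),Q_j] = u_j\,\pi_\lambda(x,u)$.

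Next, take $g \in \s(\R^{2n})$, or more generally $g$ with $g$, $x_jg$, $u_jg$ in $L^1\cap L^2$. Multiply the pointwise identities by $g(x,u)$ and integrate in $(x,u)$, applying both sides to a fixed $\varphi\in\s(\R^n)$. Because $\pi_\lambda(g)$ is an integral operator with a kernel built from $g$ by partial Fourier transform in $x$, it maps $\s(\R^n)$ into $\s(\R^n)$ when $g\in\s(\R^{2n})$. This justifies interchanging $P_j$ and $Q_j$ with the integral, by differentiation under the integral sign and dominated convergence. The result is
$$ \pi_\lambda(\lambda x_jg)\varphi = i[\pi_\lambda(g),P_j]\varphi, \qquad \pi_\lambda(u_jg)\varphi = [\pi_\lambda(g),Q_j]\varphi. $$

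The main obstacle is the extension to general $g\in L^2(\R^{2n})$ with $x_jg, u_jg\in L^2$. For such $g$ the commutator must be interpreted suitably: for instance, as an operator defined on $\s(\R^n)$ that extends to a Hilbert--Schmidt operator, or weakly through $(\,\cdot\,\varphi,\psi)$ with $\varphi,\psi\in\s(\R^n)$. I would choose Schwartz functions $g_k$ such that $g_k\to g$, $x_jg_k\to x_jg$ and $u_jg_k\to u_jg$ in $L^2$; truncation followed by mollification works. The Plancherel theorem for the Weyl transform (unitarity of $\pi_\lambda$ up to the constant) then gives convergence of the left-hand sides in $\s_2$. On the right-hand side, $\pi_\lambda(g_k)\to\pi_\lambda(g)$ in operator norm, so in the weak formulation
$$ \big([\pi_\lambda(g_k),P_j]\varphi,\psi\big) = \big(\pi_\lambda(g_k)P_j\varphi,\psi\big) + \big(\pi_\lambda(g_k)\varphi,P_j\psi\big), $$
using $P_j^\ast=-P_j$ on Schwartz functions. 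The right-hand side here converges to the corresponding expression for $g$, and the analogous computation holds for $Q_j$. This yields the identities in the stated generality.
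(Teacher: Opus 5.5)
The paper gives no proof of this proposition; it only cites \cite[Proposition 2.3]{LLT}. Your argument is correct and complete on its own terms.

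The pointwise identities $[\pi_\lambda(x,u),P_j]=-i\lambda x_j\,\pi_\lambda(x,u)$ and $[\pi_\lambda(x,u),Q_j]=u_j\,\pi_\lambda(x,u)$ on $\mathcal{S}(\R^n)$ are right. They are the infinitesimal form of the relation $\pi_\lambda(a,b)\pi_\lambda(x,u)\pi_\lambda(-a,-b)=e^{i\lambda(x\cdot b-u\cdot a)}\pi_\lambda(x,u)$ that the paper uses later. Integrating against a nice $g$ is justified by dominated convergence. The density step works because $g_k\to g$ in $L^2$ forces $\pi_\lambda(g_k)\to\pi_\lambda(g)$ in $\mathcal{S}_2$, and hence in operator norm.

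Two small points:
\begin{enumerate}
\item $Q_j$ is symmetric rather than skew, so its weak form reads $(\pi_\lambda(g)Q_j\varphi,\psi)-(\pi_\lambda(g)\varphi,Q_j\psi)$, with a minus sign instead of your plus.
\item Your weak identity upgrades to a strong one. It shows that $\psi\mapsto(\pi_\lambda(g)\varphi,P_j\psi)$ (respectively $Q_j\psi$) is $L^2$-bounded. Hence $\pi_\lambda(g)\varphi$ lies in the maximal domain of $P_j$ (respectively $Q_j$) for every $\varphi\in\mathcal{S}(\R^n)$, and the commutator is genuinely defined on Schwartz functions.
\end{enumerate}

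There is a more structural route, suggested by the way the paper pairs this statement with the vector-field relations of the next proposition from the same source. Work on $\He^n$ in coordinates $(x,y,t)$, with $y$ playing the role of $u$. Let $X_j=\partial_{x_j}+\frac12 y_j\partial_t$ and $Y_j=\partial_{y_j}-\frac12 x_j\partial_t$ be the left-invariant fields. Then $X_j-\widetilde X_j=y_j\partial_t$ and $Y_j-\widetilde Y_j=-x_j\partial_t$. The group Fourier transform turns left-invariant fields into composition on the right, and right-invariant fields into composition on the left, with $-d\pi_\lambda(X_j)=-i\lambda Q_j$ and $-d\pi_\lambda(Y_j)=-P_j$. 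So the difference of the two transforms is automatically a commutator, and passing to $f^\lambda$ (where $\partial_t$ becomes $-i\lambda$) gives both identities at once.

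That route explains conceptually why commutators appear and fits the Heisenberg-group framework of the paper. Yours is more elementary, stays on $\R^{2n}$, and makes explicit the domain issue for general $g\in L^2(\R^{2n})$, which the statement leaves implicit.
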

\vskip0.1in
For a proof of the above relations, see \cite[Proposition 2.3]{LLT}. The operators $ [ \pi_\lambda(f), P_j] $ and $ [ \pi_\lambda(f), Q_j] $ have been used to study Weyl multipliers and proving Paley-Wiener theorems 
on the Heisenberg group, see the articles \cite{GM, ST-JFA, BT}. We also require  some more properties of the Fourier transform on the Heisenberg group which we state below. Let 
$$ \widetilde{X}_j = \frac{\partial}{\partial x_j}-\frac{1}{2}y_j \frac{\partial}{\partial t}, \quad \widetilde{Y}_j = \frac{\partial}{\partial y_j}+\frac{1}{2}x_j \frac{\partial}{\partial t}, \quad T = \frac{\partial}{\partial t} $$
be the right invariant vector fields on the Heisenberg group forming a basis for its Lie algebra. Then the following relations have been proved in \cite[Proposition 2.3]{LLT}. 

\begin{prop} 
Under suitable conditions on functions $f$ on $\He^n$, we have 
$$ \widehat{(i \widetilde{X}_j f)}(\lambda) = \lambda \, Q_j \, \widehat{f}(\lambda) \quad \text{and} \quad \widehat{(-\widetilde{Y}_j f)}(\lambda) = P_j \, \widehat{f}(\lambda). $$
\end{prop}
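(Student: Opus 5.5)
The plan is to prove both identities by integrating by parts in the defining integral $\hat{f}(\lambda)=\int_{\He^n} f(x,y,t)\,\pi_\lambda(x,y,t)\,dx\,dy\,dt$, so that the vector field moves from $f$ onto the operator-valued function $(x,y,t)\mapsto \pi_\lambda(x,y,t)$. It then remains to identify what $\widetilde{X}_j$ and $\widetilde{Y}_j$ do to $\pi_\lambda(x,y,t)$. I would work with $f$ in the Schwartz class $\mathcal{S}(\He^n)$, which is what "suitable conditions" will mean. I would also test all operator identities weakly, against Schwartz functions $\varphi,\psi$ on $\R^n$. This makes the unbounded operators $Q_j,P_j$ harmless and keeps the boundary terms zero.

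\textbf{Step 1: differentiate the representation.} For fixed $\varphi\in\mathcal{S}(\R^n)$ we have $\pi_\lambda(x,y,t)\varphi(\xi)=e^{i\lambda t}e^{i\lambda(x\cdot\xi+\frac12 x\cdot y)}\varphi(\xi+y)$, and I would differentiate this directly.
\begin{itemize}
\item $\partial_{x_j}$ produces the factor $i\lambda(\xi_j+\frac12 y_j)$, and $-\frac12 y_j\partial_t$ produces $-\frac12 y_j\, i\lambda$. Hence
$$\widetilde{X}_j\big(\pi_\lambda(x,y,t)\varphi\big)(\xi)=i\lambda\,\xi_j\,\pi_\lambda(x,y,t)\varphi(\xi),$$
that is, $\widetilde{X}_j\pi_\lambda(\cdot)=i\lambda\, Q_j\,\pi_\lambda(\cdot)$.
\item $\partial_{y_j}$ gives $\frac{i\lambda}{2}x_j\,\pi_\lambda\varphi+e^{i\lambda t}e^{i\lambda(x\cdot\xi+\frac12x\cdot y)}(\partial_j\varphi)(\xi+y)$, and $\frac12x_j\partial_t$ adds another $\frac{i\lambda}{2}x_j\,\pi_\lambda\varphi$. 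Hence $\widetilde{Y}_j\pi_\lambda\varphi=i\lambda x_j\,\pi_\lambda\varphi+\pi_\lambda P_j\varphi$.
\end{itemize}
Differentiating $\pi_\lambda(x,y,t)\varphi(\xi)$ in $\xi_j$ gives exactly the same expression, $P_j\pi_\lambda\varphi=i\lambda x_j\,\pi_\lambda\varphi+\pi_\lambda P_j\varphi$. So $\widetilde{Y}_j\pi_\lambda(\cdot)=P_j\,\pi_\lambda(\cdot)$ on Schwartz functions.

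\textbf{Step 2: integrate by parts.} Pair with $\psi$ and write
$$\big(\widehat{\widetilde{X}_jf}(\lambda)\varphi,\psi\big)=\int (\widetilde{X}_jf)(x,y,t)\,\big(\pi_\lambda(x,y,t)\varphi,\psi\big)\,dx\,dy\,dt.$$
The coefficients of $\widetilde{X}_j$ involve only variables it does not differentiate, so its formal adjoint is $-\widetilde{X}_j$. Since $f$ is Schwartz and the matrix coefficient is bounded and smooth, with derivatives of polynomial growth, the boundary terms vanish. This gives
$$\big(\widehat{\widetilde{X}_jf}(\lambda)\varphi,\psi\big)=-\int f\,\big(i\lambda Q_j\pi_\lambda\varphi,\psi\big)=-i\lambda\,\big(\hat f(\lambda)\varphi,Q_j\psi\big).$$
Thus $\widehat{(i\widetilde{X}_jf)}(\lambda)=\lambda Q_j\hat f(\lambda)$ in the weak sense. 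The same computation with $\widetilde{Y}_j$ gives $\widehat{(\widetilde{Y}_jf)}(\lambda)=-P_j\hat f(\lambda)$, which is the second identity.

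\textbf{Main obstacle.} The only genuinely delicate point is the $\widetilde{Y}_j$ computation. There the derivative in $y$ shifts the argument of $\varphi$ while the phase depends on $\xi$, so one must check that the combination $\partial_{y_j}+\frac12x_j\partial_t$ is a \emph{left} multiplication by $P_j$ and not $\pi_\lambda P_j$. This works because the two $\frac{i\lambda}{2}x_j$ terms add up to precisely the term produced by differentiating the phase $e^{i\lambda x\cdot\xi}$ in $\xi$.

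\textbf{Extension.} Finally, I would note that $\hat f(\lambda)$ maps $\mathcal{S}(\R^n)$ into itself for $f\in\mathcal{S}(\He^n)$, so the weak identities upgrade to operator identities on $\mathcal{S}(\R^n)$. I would then extend by density to whatever class of $f$ the later sections need.
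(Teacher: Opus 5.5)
Your proof is correct. The paper gives no proof of this proposition; it only cites \cite[Proposition 2.3]{LLT}. So your argument is a self-contained verification rather than a reconstruction of one in the text.

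\textbf{The computation.} Your pointwise identities $\widetilde{X}_j\pi_\lambda(x,y,t)=i\lambda Q_j\pi_\lambda(x,y,t)$ and $\widetilde{Y}_j\pi_\lambda(x,y,t)=P_j\pi_\lambda(x,y,t)$ on $\mathcal{S}(\R^n)$ are right. Both fields are divergence free, so the integration by parts is legitimate for $f\in\mathcal{S}(\He^n)$. The sign bookkeeping also works: $Q_j$ is symmetric and $P_j$ skew-symmetric, but in either case moving the operator across the inner product and back costs no net sign. This yields $\widehat{\widetilde{X}_jf}(\lambda)=-i\lambda Q_j\hat f(\lambda)$ and $\widehat{\widetilde{Y}_jf}(\lambda)=-P_j\hat f(\lambda)$, as required. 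Your claim that $\hat f(\lambda)$ preserves $\mathcal{S}(\R^n)$ is true because $\hat f(\lambda)=\pi_\lambda(f^\lambda)$ is an integral operator with Schwartz kernel. This is exactly what upgrades the weak identities to operator identities.

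\textbf{A more structural route.} The cancellation you flagged as the main obstacle for $\widetilde{Y}_j$ is automatic from a group-theoretic viewpoint. A right-invariant field acts by $\widetilde{X}f(g)=\frac{d}{ds}f(\exp(sX)g)|_{s=0}$. Left-invariance of Haar measure and the homomorphism property give $\int_{\He^n} f(\exp(sX)g)\,\pi_\lambda(g)\,dg=\pi_\lambda(\exp(-sX))\hat f(\lambda)$. Hence $\widehat{\widetilde{X}f}(\lambda)=-d\pi_\lambda(X)\hat f(\lambda)$. It then only remains to differentiate the one-parameter groups $\pi_\lambda(se_j,0,0)\varphi(\xi)=e^{i\lambda s\xi_j}\varphi(\xi)$ and $\pi_\lambda(0,se_j,0)\varphi(\xi)=\varphi(\xi+se_j)$, which give $d\pi_\lambda(X_j)=i\lambda Q_j$ and $d\pi_\lambda(Y_j)=P_j$.

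Your coordinate computation is precisely this argument unpacked. The abstract route explains why the operator lands on the left and why no cross terms survive. Yours has the advantage of making the domain and boundary-term issues explicit.
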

\medskip
Recalling that   $\widehat{f}(\lambda) =  \pi_\lambda(f^\lambda) $ for any  function $ f $ on $ \He^n $ and defining 
$$ \widetilde{X}_j^\lambda = \frac{\partial}{\partial x_j}+ \frac{i}{2} \lambda y_j ,\,\,\,  \widetilde{Y}_j^\lambda = \frac{\partial}{\partial y_j}-\frac{i}{2} \lambda x_j  $$ 
we can easily obtain the following relations:
$$  \pi_\lambda(i \widetilde{X}_j^\lambda f^\lambda) = \lambda\, Q_j \pi_\lambda(f^\lambda) \quad \text{and} \quad \pi_\lambda(-\widetilde{Y}_j^\lambda f^\lambda) = P_j\, \pi_\lambda(f^\lambda) $$ 
By defining  the complex vector  fields 
$$ Z_j^\lambda = (\widetilde{X}_j^\lambda +i\widetilde{Y}_j^\lambda ) \quad \text{and} \quad  \overline{Z}_j^\lambda =(\widetilde{X}_j^\lambda -i\widetilde{Y}_j^\lambda ) $$ 
the  above formulas  can also be written in the form
\begin{equation} 
\label{Z-j-action}
\pi_\lambda( Z_j^\lambda f^\lambda) = -i  A_j(\lambda) \pi_\lambda(f^\lambda) \quad \text{and} \quad \pi_\lambda( \overline{Z}_j^\lambda f^\lambda) =  -i A_j^\ast(\lambda) \pi_\lambda(f^\lambda)
\end{equation}
where $A_j(\lambda)$ and $A_j^\ast(\lambda)$ are the annihilation and creation operators of quantum mechanics:
\begin{equation}\label{anni-crea}  A_j(\lambda) = \partial_{\xi_j}  +|\lambda| \xi_j \quad \text{and} \quad A_j^\ast(\lambda) = -\partial_{\xi_j}  +|\lambda| \xi_j. \end{equation}

\subsection{The twisted Bargmann transform and the twisted Fock spaces}
It is well known that the Bargmann transform $ B $ is closely related to the Schr\"odinger representation $ \pi $ of  $ \He^n$  which is unitarily equivalent to a representation realised on the Fock space $ \mathcal{F}(\C^n).$  In a similar way, for each non-zero $ \lambda \in \R $ we have a nilpotent Lie group $ \He^n_\lambda,$  called the twisted Heisenberg group and a unitary representation $ \rho_\lambda $ realised on a space $ \Fs ,$  known as the twisted Fock space and a twisted version of the Bargmann transform denoted by $ B_\lambda $ which takes $ L^2(\R^{2n}) $ unitarily onto $ \Fs.$  We begin by recalling the necessary background material to state relevant results on $ \Fs.$ The basic reference for this section is \cite{KTX, GT, ST-IJPAM}. We also refer the reader to \cite{ST-princeton, ST-uncertainty} for results related to Hermite and special Hermite operators.\\

Along with the Hermite operator $ H(\lambda) $ on $ \R^n$  we also consider the special Hermite operator $ L_\lambda $ on $ \R^{2n} $ generating the semigroup $ e^{-t L_\lambda}.$  It is known that the (twisted) convolution kernel associated with this semigroup is given by
$$ p_t^\lambda(x,u) = (4\pi)^{-n} \, \lambda^n (\sinh t\lambda)^{-n} \, e^{-\frac{1}{4}\lambda (\coth t\lambda)(|x|^2+|u|^2)},\,\,\,  $$
so that $ e^{-tL_\lambda}f = f \ast_\lambda p_t^\lambda.$ The kernel $ p_t^\lambda(x,u)$  has  a holomorphic extension $ p_t^\lambda(z,w) $ to $ \C^{2n}.$   The twisted Bargmann transform $ B_\lambda $ is defined on $ L^2(\R^{2n}) $ by
\begin{align} \label{def:Gauss-Bargmann-transform-tiwsted-fock}
B_\lambda f(z,w) = (2\pi)^{-n/2}\, |\lambda|^{n/2}\,  c_{n,\lambda} \,\, p_1^\lambda(z,w)^{-1}\, \tr\left( \pi_\lambda(-z,-w) \pi_\lambda(f) e^{-\frac{1}{2}H(\lambda)} \right) 
\end{align}
where $ c_{n,\lambda} = (4\pi)^{-n} \, \lambda^n (\sinh \lambda)^{-n}.$  
In view of the inversion formula \eqref{weyl-inversion} for the Weyl transform, using the fact that $ \pi_\lambda(p_t^\lambda) = e^{-tH(\lambda)} ,$  the twisted  Bargmann transform is also given by
$$ B_\lambda f(z,w) =  \,c_{n,\lambda} \,\, p_1^\lambda(z,w)^{-1}\,  f \ast_\lambda p_{1/2}^\lambda(z,w). $$
The image of $ L^2(\R^{2n}) $ under $ B_\lambda $ is known to be a weighted Bergman space with the explicit weight function
$$ w_\lambda(\zeta) = 4^n c_{n,\lambda}\, \, e^{-\frac{1}{2} \lambda (\coth \lambda)|\zeta|^2}\, e^{\lambda [ \Re \zeta,\,\Im \zeta ]},\,\,  \zeta = (z,w). $$
Here $ [\xi,\eta] $ is the symplectic form on $ \R^{2n} $ defined by $ [(x,u),(y,v)] = (u\cdot y-v\cdot x)$ and the constant $c_{n,\lambda} $ has been chosen in such a way that $ B_\lambda $ is an isometry: 
$$ \int_{\C^{2n}} |B_\lambda f(z,w)|^2\, w_\lambda(z,w) \, dz\, dw = \int_{\R^{2n}} |f(x,u)|^2\, dx\, du.$$ 

The twisted Fock space $ \Fs $ is defined as the space of all entire functions on $ \C^{2n} $ square integrable with respect to the measure $ w_\lambda(\zeta)\, d\zeta $ where $ d\zeta $ is the Lebesgue measure on $ \C^{2n}.$  When equipped with the inner product 
$$ \langle F, G \rangle = \int_{\C^{2n}} F(\zeta) \, \overline{G(\zeta)} \, w_\lambda(\zeta) \, d\zeta ,$$ 
the space $ \Fs $ becomes a reproducing kernel Hilbert space and the Bargmann transform
$ B_\lambda: L^2(\R^{2n}) \rightarrow \Fs $ becomes  a unitary operator. The reproducing kernel for $ \Fs$ is known explicitly.  For $ \zeta = (z,w), \zeta^\prime = (z^\prime,w^\prime),$ we define $ [\zeta, \zeta^\prime] =
(w \cdot z^\prime - z \cdot w^\prime)  $ be the extension of the symplectic for $\R^{2n}$ to $ \C^{2n}.$ With this notation, the reproducing kernel is  given by
\begin{equation}\label{rep-ker}
K_\zeta^\lambda(\zeta^\prime) = d_{n,\lambda}\, \,e^{ \frac{\lambda}{2} (\coth \lambda)\, (\overline{\zeta} \cdot \zeta^\prime)}\, e^{i\frac{\lambda}{2}\,  [ \overline{\zeta}, \zeta^\prime]} 
\end{equation}
 where $ d_{n,\lambda} $ is an explicit constant depending  on $ \lambda $ and the dimension. We remark that  when $ \lambda =0$ the kernel $ K_\zeta^0(\zeta^\prime)$ is a constant multiple of  the reproducing kernel for the Fock space $ \mathcal{F}(\C^{2n}).$ We refer the reader to \cite{KTX, GT} for more details of this transform and further properties of $ \Fs.$ We note that when $ \lambda = 0$  the twisted Fock space reduces to the classical Fock space $ \mathcal{F}(\C^{2n}).$\\

\begin{rem} Since $ p_{1/2}^ \lambda \ast_\lambda p_{1/2}^\lambda = p_1^\lambda $ it follows that $ B_\lambda p_{1/2}^\lambda = c_{n,\lambda}$ and hence the constant  $ c_{n,\lambda} $ is determined by 
the equation
$$ 4^n  c_{n,\lambda}^3\,  \int_{\C^{2n}} e^{-\frac{1}{2} \lambda (\coth \lambda)|\zeta|^2}\, e^{\lambda [ \Re \zeta,\,\Im \zeta ]}\, d\zeta =  \int_{\R^{2n}} |p_{1/2}^\lambda(x,u)|^2 \, dx\, du.$$
In particular, in the limiting case we get $ c_{n,0} = (4\pi)^{-n} $ so that $ w_0(z,w) dz dw = d\nu_0(z,w).$ Similary, the constant $ d_{n,\lambda} $ in the definition of $ K_\zeta^\lambda(\zeta^\prime)$ is given by the equation
$$ F(\zeta) = 4^n d_{n,\lambda} \int_{\C^{2n}} F(\zeta^\prime)\,\,e^{ \frac{\lambda}{2} (\coth \lambda)\, (\overline{\zeta^\prime} \cdot \zeta)}\, e^{i\frac{\lambda}{2}\,  [ \overline{\zeta^\prime}, \zeta]} \, w_\lambda(\zeta^\prime)\, d\zeta^\prime.$$ 
By taking $ F(\zeta) = c_{n,\lambda} $ we can easily evaluate the constant $ d_{n,\lambda}.$ By passing to the limit as $ \lambda $ goes to zero we see that $ 4^n K_\zeta^0(\zeta^\prime) $ is the reproducing kernel for $ \mathcal{F}(\C^{2n}) $ and hence $ d_{n,0} = (4\pi)^{-n}.$
\end{rem}

\subsection{ Derivations on the space of bounded linear operators}We denote by $ B(L^2(\R^n))$ the space of all bounded linear operators on $ L^2(\R^n).$ Given a densely defined  linear operator  $ A $  and 
 $ T \in  B(L^2(\R^n)) $ we define $ \partial_A(T) = [T,A]= TA-AT $ whenever it makes sense as an element of $B(L^2(\R^n)).$  Then it is easily checked that $ \partial_A(ST) = \partial_A(S) T + S \partial_A(T)$ and for this reason $ \partial_A $ is called a derivation.
 Two derivations $ \partial_A $ and $ \partial_B $ do not commute in general. Since
$ [ [T,A], B] = TAB- ATB-BTA+BAT ,$  we can rewrite the above  to get 
$$ \partial_B \circ \partial_A (T) =  T [A,B]+ TBA-ATB-BTA-[A,B]T+ABT.$$
From the above relation, we easily deduce that  $ \partial_A $ commutes with $ \partial_B  $ whenever $ [A,B] = cI.$\\

The annihilation and creation operators defined in \eqref{anni-crea} earn their name  because of their action on the Hermite functions as shown below:
$$ A_j(\lambda)\Phi_\alpha^\lambda =  \sqrt{2\alpha_j |\lambda|}\, \Phi_{\alpha-e_j}^\lambda \quad \text{and} \quad A_j(\lambda)^\ast\Phi_\alpha^\lambda =  \sqrt{(2\alpha_j+2)|\lambda|}\, \Phi_{\alpha+e_j}^\lambda. $$ 
The derivations $ \partial_{A_j(\lambda)} $ and $ \partial_{A_j^\ast(\lambda)}$ play an important role in this work.  As can be easily seen $ [A_j(\lambda), A_k^\ast(\lambda)] =  2 |\lambda| \delta_{jk} \, I$, and hence it follows that  $ \{ \partial_{A_j(\lambda)} ,  \partial_{A_j^\ast(\lambda)}: j = 1, 2, \ldots, n \} $ is a commuting family of derivations. The action of these derivations on the Hermite semigroup are easy to calculate.

\begin{lem}\label{lem.2.4} For any $ \lambda \neq 0, t >0 $ and $ j=1, 2, \ldots, n$ we have
$$ \partial_{A_j(\lambda)} e^{-tH(\lambda)} = (e^{2t|\lambda|}-1) A_j(\lambda) e^{-tH(\lambda)},\,\, \partial_{A_j(\lambda)^\ast} e^{-tH(\lambda)} = (e^{-2t|\lambda|}-1) A_j(\lambda)^\ast e^{-tH(\lambda)}.$$
\end{lem}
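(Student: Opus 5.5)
The plan is to reduce both identities to an intertwining relation between $e^{-tH(\lambda)}$ and the operators $A_j(\lambda)$, $A_j^\ast(\lambda)$. I will verify that relation on the Hermite basis $\Phi_\alpha^\lambda$, $\alpha\in\Na^n$, using the explicit actions of $A_j(\lambda)$ and $A_j(\lambda)^\ast$ recorded just before the lemma.

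\emph{Step 1 (the annihilation operator).} Since $H(\lambda)\Phi_\alpha^\lambda=(2|\alpha|+n)|\lambda|\Phi_\alpha^\lambda$, we have $e^{-tH(\lambda)}\Phi_\alpha^\lambda=e^{-t(2|\alpha|+n)|\lambda|}\Phi_\alpha^\lambda$. Using $A_j(\lambda)\Phi_\alpha^\lambda=\sqrt{2\alpha_j|\lambda|}\,\Phi_{\alpha-e_j}^\lambda$, I compute
$$ e^{-tH(\lambda)}A_j(\lambda)\Phi_\alpha^\lambda=\sqrt{2\alpha_j|\lambda|}\,e^{-t(2|\alpha|-2+n)|\lambda|}\Phi_{\alpha-e_j}^\lambda = e^{2t|\lambda|}\,A_j(\lambda)e^{-tH(\lambda)}\Phi_\alpha^\lambda .$$
When $\alpha_j=0$ both sides vanish. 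Hence $e^{-tH(\lambda)}A_j(\lambda)=e^{2t|\lambda|}A_j(\lambda)e^{-tH(\lambda)}$ on finite linear combinations of Hermite functions. This is just the commutation rule $H(\lambda)A_j(\lambda)=A_j(\lambda)(H(\lambda)-2|\lambda|)$ exponentiated.

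\emph{Step 2 (the creation operator).} In the same way, $A_j(\lambda)^\ast\Phi_\alpha^\lambda=\sqrt{(2\alpha_j+2)|\lambda|}\,\Phi_{\alpha+e_j}^\lambda$ raises the eigenvalue by $2|\lambda|$. This gives $e^{-tH(\lambda)}A_j(\lambda)^\ast=e^{-2t|\lambda|}A_j(\lambda)^\ast e^{-tH(\lambda)}$.

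\emph{Step 3 (the derivations).} By definition, $\partial_{A}(T)=TA-AT$. Substituting the relation from Step 1 gives
$$\partial_{A_j(\lambda)}e^{-tH(\lambda)}=(e^{2t|\lambda|}-1)A_j(\lambda)e^{-tH(\lambda)},$$
and the relation from Step 2 gives the formula for $A_j(\lambda)^\ast$ in the same way.

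The only point needing care is functional-analytic: the derivation must be a genuine element of $B(L^2(\R^n))$. The operator $A_j(\lambda)e^{-tH(\lambda)}$ is bounded, since on $\Phi_\alpha^\lambda$ it acts with coefficients $\sqrt{2\alpha_j|\lambda|}\,e^{-t(2|\alpha|+n)|\lambda|}$, which are uniformly bounded in $\alpha$; the same holds for $A_j(\lambda)^\ast e^{-tH(\lambda)}$. The identities above hold on the dense span of the Hermite functions. Therefore $e^{-tH(\lambda)}A_j(\lambda)$, initially defined on that dense domain, extends to a bounded operator equal to $e^{2t|\lambda|}A_j(\lambda)e^{-tH(\lambda)}$, and the stated formulas hold in $B(L^2(\R^n))$.
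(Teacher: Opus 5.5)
Your proof is correct and is essentially the paper's argument. The paper also just checks the identities on the Hermite basis $\Phi_\alpha^\lambda$ using the ladder actions of $A_j(\lambda)$ and $A_j(\lambda)^\ast$, which amounts to your intertwining relations $e^{-tH(\lambda)}A_j(\lambda)=e^{2t|\lambda|}A_j(\lambda)e^{-tH(\lambda)}$ and $e^{-tH(\lambda)}A_j(\lambda)^\ast=e^{-2t|\lambda|}A_j(\lambda)^\ast e^{-tH(\lambda)}$; your boundedness remark supplies the functional-analytic detail the paper leaves implicit.
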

\begin{proof} The proof follows by direct calculation using the action of the creation and annihilation operators on the Hermite functions given in the previous paragraph.
\end{proof}


\section{Operator analogues of Hermite functions} 

\subsection{Deforming the Fock space} In this subsection we show that there exits a very simple relation between the Fock space $ \mathcal{F}(\C^{2n})$ and the twisted Fock space $ \Fs$ which plays a major role in defining the operator analogues of Hermite functions. We are indebted to Venku Naidu (personal communication) for bringing this to our attention.

\medskip 
For nonzero $ \lambda \in \R $, let $ \delta_\lambda $ stand for the dilation $ \delta_\lambda \zeta = (\frac{\lambda}{\sinh \lambda})^{1/2} \zeta$ for $ \zeta \in \C^{2n}.$ We also define $ c_\lambda = \cosh \lambda $ and $ s_\lambda = \sinh \lambda.$
Let $ \sigma_\lambda : \C^{2n} \rightarrow \C^{2n} $ stand for  the linear transformation  defined by 
$$  \sigma_\lambda(z,w) =  \delta_\lambda(c_{\lambda/2}z-i s_{\lambda/2}w, c_{\lambda/2} w+i s_{\lambda/2} z).$$ 
Then we have the following result which shows that $\Fs $ can be considered as a deformation of $ \mathcal{F}(\C^{2n}).$

\begin{prop} 
The map $ T_\lambda $ defined on $ \mathcal{F}(\C^{2n}) $ by 
$ T_\lambda F(\zeta) = a_{n,\lambda}\, F( \sigma_\lambda \zeta)$ where $ a_{n,\lambda} = \pi^{n/2} \sqrt{c_{n,\lambda}} $, is an isometric isomorphism  from $ \mathcal{F}(\C^{2n}) $ onto $ \Fs.$
\end{prop}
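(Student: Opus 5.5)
The plan is a direct change of variables: $\sigma_\lambda$ should carry the weight $w_\lambda$ onto the Gaussian weight of $\mathcal{F}(\C^{2n})$, up to a constant. Write $c=c_{\lambda/2}$, $s=s_{\lambda/2}$ and $\zeta=(z,w)$. First I would compute $|\sigma_\lambda\zeta|^2$. Expanding,
$$|cz-isw|^2+|cw+isz|^2=(c^2+s^2)|\zeta|^2-4cs\,\Im(z\cdot\bar w)=\cosh\lambda\,|\zeta|^2-2\sinh\lambda\,\Im(z\cdot \bar w).$$
This uses $2\Re\big(cz\cdot\overline{(-isw)}\big)=2\Re\big(cw\cdot\overline{isz}\big)=-2cs\,\Im(z\cdot\bar w)$. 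Multiplying by the dilation factor $\lambda/\sinh\lambda$ gives
$$\tfrac12|\sigma_\lambda\zeta|^2=\tfrac12\lambda(\coth\lambda)|\zeta|^2-\lambda\,\Im(z\cdot\bar w).$$
Hence $e^{-\frac12|\sigma_\lambda\zeta|^2}=e^{\lambda\Im(z\cdot\bar w)}e^{-\frac12\lambda(\coth\lambda)|\zeta|^2}$, and so $w_\lambda(\zeta)=4^n c_{n,\lambda}\,e^{-\frac12|\sigma_\lambda\zeta|^2}$. This is the key identity.

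Next I would compute the Jacobian. The map $\sigma_\lambda$ is complex linear on $\C^{2n}$, given by $(\lambda/\sinh\lambda)^{1/2}\begin{pmatrix} c & -is\\ is & c\end{pmatrix}\otimes I_n$. Its complex determinant is $(\lambda/\sinh\lambda)^{n}(c^2-s^2)^n=(\lambda/\sinh\lambda)^n$. The real Jacobian of $\sigma_\lambda$ on $\R^{4n}$ is therefore $(\lambda/\sinh\lambda)^{2n}$, and in particular $\sigma_\lambda$ is invertible. Substituting $\eta=\sigma_\lambda\zeta$ then gives, for every entire $F$,
$$\int_{\C^{2n}}|F(\sigma_\lambda\zeta)|^2 w_\lambda(\zeta)\,d\zeta = 4^n c_{n,\lambda}\Big(\frac{\sinh\lambda}{\lambda}\Big)^{2n}(4\pi)^{n}\int_{\C^{2n}}|F(\eta)|^2\,d\nu_{2n}(\eta).$$
Thus $F\mapsto F\circ\sigma_\lambda$ is a constant multiple of an isometry, and $T_\lambda$ is an isometry once the constant $a_{n,\lambda}$ cancels this factor.

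The only delicate point is bookkeeping of the normalisations, and I expect this to be the main obstacle. The paper fixes $c_{n,\lambda}$ through the isometry of $B_\lambda$ (see the Remark preceding this subsection), and the Lebesgue measure conventions on $\C^{2n}$ also enter. I would pin the constant down by testing on $F\equiv 1$. Its norm in $\mathcal{F}(\C^{2n})$ is known, and $\|a_{n,\lambda}\|$ in $\Fs$ equals $a_{n,\lambda}(\int w_\lambda)^{1/2}$, which the identity above evaluates. Matching these two norms determines $a_{n,\lambda}$, and I would check that it equals $\pi^{n/2}\sqrt{c_{n,\lambda}}$ under the paper's conventions. If the numbers do not match on the first pass, I would recheck the normalisation of $d\zeta$ against the Remark, since by the computation above the squared ratio of norms is independent of $F$.

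Finally, surjectivity. Composition with the invertible complex-linear map $\sigma_\lambda$ preserves entireness, so $T_\lambda$ maps $\mathcal{F}(\C^{2n})$ into $\Fs$. Conversely, given $G\in\Fs$, set $F=a_{n,\lambda}^{-1}G\circ\sigma_\lambda^{-1}$. This $F$ is entire, and the same change of variables run backwards shows $F\in\mathcal{F}(\C^{2n})$. Since $T_\lambda F=G$, the map $T_\lambda$ is an isometric isomorphism onto $\Fs$.
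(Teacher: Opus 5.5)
Your proposal is correct and is essentially the paper's proof: a linear change of variables $\eta=\sigma_\lambda\zeta$ with real Jacobian $(\lambda/\sinh\lambda)^{2n}$, combined with the weight identity $4^{-n}c_{n,\lambda}^{-1}w_\lambda(\zeta)=e^{-\frac12|\sigma_\lambda\zeta|^2}$, which you verify explicitly where the paper only calls it easily verifiable. The constant check you left open closes at once with the explicit value $c_{n,\lambda}=(4\pi)^{-n}(\lambda/\sinh\lambda)^n$ from Section 2.3, since then $4^n c_{n,\lambda}(4\pi)^n(\sinh\lambda/\lambda)^{2n}=4^n(\sinh\lambda/\lambda)^n=(\pi^n c_{n,\lambda})^{-1}=a_{n,\lambda}^{-2}$.
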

\begin{proof} 
For any $ F \in \mathcal{F}(\C^{2n}) $, consider the integral 
$$ \int_{\C^{2n}} |F(\zeta)|^2\, w_0(\zeta) \, d\zeta.$$ 
Since the determinant of the $4n \times 4n$ real matrix associated to the linear transformation  
$$ (z,w) \rightarrow (c_{\lambda/2}z-i s_{\lambda/2}w, c_{\lambda/2} w+i s_{\lambda/2} z)$$ 
is unity, we can make a change of variables in the last integral to get
$$ \int_{\C^{2n}} |F(\zeta)|^2\, w_0(\zeta) \, d\zeta = \left( \frac{\lambda}{\sinh \lambda} \right)^{2n}\, \int_{\C^{2n}} |F(\sigma_\lambda \zeta)|^2 \, w_0(\sigma_\lambda \zeta) \, d\zeta. $$ 
In view of the easily verifiable relation $ c_{n,0}^{-1}\, w_0(\sigma_\lambda \zeta) =  4^{-n}c_{n,\lambda}^{-1}\, w_\lambda(\zeta),$ the above identity implies that whenever $ F \in \mathcal{F}(\C^{2n}),$ we have 
$$ \int_{\C^{2n}} |T_\lambda F(\zeta)|^2\, w_\lambda(\zeta) \, d\zeta = \int_{\C^{2n}} |F(\zeta)|^2\, w_0(\zeta) \, d\zeta $$
As the map is clearly invertible, the proposition stands proved.
\end{proof}

\begin{rem} \label{reproduce} 
By direct calculation we can also verify that
$$ e^{\frac{1}{2} (\sigma_\lambda \zeta^\prime)\cdot \overline{(\sigma_\lambda \zeta)}} =  e^{\frac{1}{2}\lambda (\coth \lambda) ( \zeta^\prime \cdot \bar{\zeta}) } e^{-i \frac{\lambda}{2} [ \zeta^\prime, \bar{\zeta}]} .$$
Therefore, for any $ F \in \mathcal{F}(\C^{2n}),$ using the fact that $ d_{n,0} \, e^{\frac{1}{2} \zeta^\prime \cdot \bar{\zeta}}$ is the reproducing kernel, we have
$$ F(\sigma_\lambda \zeta) = (4\pi)^{-n} \, \int_{\C^{2n}} F(\zeta^\prime) \, e^{\frac{1}{2} (\sigma_\lambda \zeta) \cdot \bar{\zeta^\prime}} \, w_0(\zeta^\prime) d\zeta^\prime.$$
As before we make the same change of variables to rewrite the above as
$$ F(\sigma_\lambda \zeta) = (4\pi)^{-n}\, \Big(\frac{\lambda}{\sinh \lambda}\Big)^{2n} \, \int_{\C^{2n}} F(\sigma_\lambda \zeta^\prime) \,e^{\frac{1}{2} \sigma_\lambda\zeta \cdot \overline{\sigma_\lambda \zeta^\prime}}\, w_0(\sigma_\lambda \zeta^\prime) \, d \zeta^\prime.$$
In view of the above calculation this gives the reproducing formula
$$ T_\lambda F(\zeta) =  \int_{\C^{2n}} T_\lambda F(\zeta^\prime) \, \overline{K_\zeta^\lambda(\zeta^\prime)} \, w_\lambda(\zeta^\prime) \, d\zeta^\prime .$$
\end{rem}

\begin{rem} The relations described above between the Fock and twisted Fock spaces have some interesting consequences. For example,
consider the following Toeplitz operator $ T_g $ defined on $ \Fs$:
$$ T_gF(\zeta) = \int_{\C^{2n}}  g(\zeta^\prime)\, F(\zeta^\prime)\, e^{\frac{1}{2}\lambda \coth \lambda (  \zeta \cdot \bar{\zeta^\prime}) } e^{-i \frac{\lambda}{2} [\zeta, \bar{\zeta^\prime}]} \,w_\lambda(\zeta^\prime)\, d\zeta^\prime.$$
If we let $ F_0(\zeta) = F(\sigma_\lambda^{-1}\zeta) $ and $ g_0(\zeta) = g(\sigma_\lambda^{-1} \zeta),$ then it can be checked that
$$ T_{g_0}F_0(\sigma_\lambda \zeta) = \int_{\C^{2n}} g_0(\zeta^\prime)\, F_0(\zeta^\prime) \, e^{\frac{1}{2} (\sigma_\lambda \zeta) \cdot \bar{\zeta^\prime}} \, w_0(\zeta^\prime) \, d\zeta^\prime  =C_\lambda\,  T_gF(\zeta).$$
Thus the Toeplitz operator $ T_g $ is bounded on $ \Fs $ if and only if $ T_{g_0} $ is bounded on $ \mathcal{F}(\C^{2n})$, and the study of Toeplitz operators on $ \Fs $ can be reduced to the study of such operators on $ \mathcal{F}(\C^{2n}).$
\end{rem}

\subsection{Operator analogues of Hermite functions}  For every multi-index $ \mu \in \mathbb N^{2n} $ let 
$$   \zeta_\mu(z,w) =  c_\mu\, (z,w)^\mu, \, \,\, c_\mu = \pi^{-n/2} (2^{|\mu|} \mu!)^{-1/2}.$$ Then it is known that $ \zeta_\mu, \mu \in \mathbb N^{2n} $ forms an orthonormal basis for $ \mathcal{F}(\C^{2n}).$ Therefore, $ T_\lambda \zeta_\mu, \, \mu \in \mathbb N^{2n} $ forms an orthonormal system in $ \Fs.$ Consequently, there exist functions $ \Psi_\mu^\lambda \in L^2(\R^{2n})$ such that $ B_\lambda \Psi_\mu^\lambda(z,w) = a_{n,\lambda}\, \zeta_\mu(\sigma_\lambda(z,w)).$  Recalling the definition of $ a_{n,\lambda} $ this translates into the equation
$$ \zeta_\mu (\sigma_\lambda(z,w)) = \pi^{-n/2} \sqrt{c_{n,\lambda}}\, \, p_1^\lambda(z,w)^{-1}  \Psi_\mu^\lambda \ast_\lambda p_{1/2}^\lambda(z,w).$$
By taking the Weyl transform on both sides of  the equation
$$  \delta_\lambda^{|\mu|} \, c_\mu\,  (c_{\lambda/2}x-i s_{\lambda/2}u, c_{\lambda/2} u+i s_{\lambda/2} x)^\mu \, p_1^\lambda(x,u) = \pi^{-n/2} \sqrt{c_{n,\lambda}}\, \, \Psi_\mu^\lambda \ast_\lambda p_{1/2}^\lambda(x,u).$$
 we obtain the relation
$$ \pi^{n/2} c_{n,\lambda}^{-1/2}\,c_\mu\, \delta_\lambda^{|\mu|}  \pi_\lambda(  (x,u)_\lambda^\mu \, p_1^\lambda(x,u)) = \pi_\lambda( \Psi_\mu^\lambda) \, e^{-\frac{1}{2}H(\lambda)}.$$
In the above, we have written $ (x,u)_\lambda$ as a short hand for $(c_{\lambda/2}x-i s_{\lambda/2}u, c_{\lambda/2} u+i s_{\lambda/2} x).$\\

We now make use of the following relations stated in Proposition 2.1.  For suitable $ f \in L^2(\R^{2n}) $  we have
$$  \pi_\lambda( \lambda x_jf) = i \partial_{P_j} \pi_\lambda(f),\,\,\,\, \pi_\lambda(  u_jf) =\partial_{Q_j} \pi_\lambda(f).$$
Without loss of generality we assume $ \lambda > 0 $ from now on and suppress the dependence on $ \lambda $ for the sake of simplicity of notation. For $ 1 \leq j \leq n, $ define 
$$ D_j =  i (c_{\lambda/2}\partial_{P_j}- \lambda\, s_{\lambda/2}\partial_{Q_j}) \,\,\,  D_{j+n} = (\lambda c_{\lambda/2} \partial_{Q_j}- s_{\lambda/2} \partial_{P_j})$$ and let $ D^\mu = D_1^{\mu_1} D_2^{\mu_2} \cdots D_{2n}^{\mu_{2n}}.$ Since $ \pi_\lambda(p_t^\lambda) = e^{-tH(\lambda)} $ it is clear that  $ \pi_\lambda( \Psi_\mu^\lambda) $ satisfies the equation
$$  \pi_\lambda( \Psi_\mu^\lambda) \, e^{-\frac{1}{2}H(\lambda)} = c_\mu^\lambda\, (\lambda  s_\lambda)^{-|\mu|/2} \, D^\mu e^{-H(\lambda)},\,\,\, c_\mu^\lambda = \pi^{n/2} c_{n,\lambda}^{-1/2}\,c_\mu\,.$$
The  similarity between the above equation and the definition of the Hermite functions  suggest that the operators $ S_\mu^\lambda = \pi_\lambda( \Psi_\mu^\lambda) $ can be considered as the operator analogues of Hermite functions. For the lack of a better name, we would like to call them Quantum Hermite functions. The following theorem further strengthens  this analogy.

\begin{thm} Let $ \mathcal{S}_2 $ be the Hilbert space of Hilbert-Schmidt operators on $ L^2(\R^n) $ equipped with the inner product $ (S, T) = \tr (ST^\ast).$ Then the family of operators $ S_\mu^\lambda, \mu \in \mathbb N^{2n} $ forms as orthonormal basis for $ \mathcal{S}_2.$
\end{thm}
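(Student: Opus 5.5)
The plan is to obtain the theorem by composing three unitary maps, so that orthonormality and completeness both reduce to facts about the monomials $\zeta_\mu$ in the classical Fock space. Recall that $S_\mu^\lambda = \pi_\lambda(\Psi_\mu^\lambda)$, where $\Psi_\mu^\lambda \in L^2(\R^{2n})$ is defined by $B_\lambda \Psi_\mu^\lambda = T_\lambda \zeta_\mu$. Equivalently,
$$ S_\mu^\lambda = \pi_\lambda \circ B_\lambda^{-1} \circ T_\lambda (\zeta_\mu). $$
So it suffices to check two things: that each map in this composition is unitary (up to a fixed scalar normalisation), and that $\{\zeta_\mu : \mu \in \mathbb N^{2n}\}$ is an orthonormal basis of $\mathcal F(\C^{2n})$.

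\textbf{Step 1.} The Proposition in Section 3.1 gives that $T_\lambda : \mathcal F(\C^{2n}) \to \Fs$ is an isometric isomorphism, hence unitary. Since $\zeta_\mu$, $\mu\in\mathbb N^{2n}$, is an orthonormal basis of $\mathcal F(\C^{2n})$ (the $2n$-dimensional case of the fact recalled in Section 2.1), the family $T_\lambda\zeta_\mu$ is an orthonormal basis of $\Fs$, not merely an orthonormal system.

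\textbf{Step 2.} By Section 2.3, $B_\lambda : L^2(\R^{2n}) \to \Fs$ is unitary. Therefore $\Psi_\mu^\lambda = B_\lambda^{-1} T_\lambda \zeta_\mu$ is well defined and unique, and the family $\{\Psi_\mu^\lambda\}$ is an orthonormal basis of $L^2(\R^{2n})$.

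\textbf{Step 3.} Apply the Weyl transform. By the Plancherel theorem for $\pi_\lambda$ (equivalently, the inversion formula \eqref{weyl-inversion}), we have
$$ \tr\big(\pi_\lambda(f)\pi_\lambda(g)^\ast\big) = (2\pi)^n |\lambda|^{-n} (f,g) $$
for $f,g \in L^2(\R^{2n})$, and $\pi_\lambda$ maps $L^2(\R^{2n})$ onto $\s_2$. Surjectivity of $\pi_\lambda$ immediately gives completeness of $\{S_\mu^\lambda\}$, since a bounded bijection carries a spanning set to a spanning set.

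The main obstacle is the normalisation in Step 3. Orthonormality in $\s_2$ requires that the scalar $(2\pi)^n|\lambda|^{-n}$ is absorbed somewhere, either in the convention for $\pi_\lambda$ or in the constants $a_{n,\lambda}$ and $c_{n,\lambda}$ entering the definition of $\Psi_\mu^\lambda$. I would handle this in one of two ways. The first option is to track $c_{n,\lambda}$ through the definition of $B_\lambda$ and the isometry normalisation of Section 2.3. The second option is to compute $\|S_0^\lambda\|_{\s_2}$ directly. By the displayed relation preceding the theorem, $S_0^\lambda$ is a scalar multiple of $e^{-\frac12 H(\lambda)}$. Its Hilbert–Schmidt norm is then computable from the eigenvalues $(2|\alpha|+n)\lambda$:
$$ \tr\big(e^{-H(\lambda)}\big) = (2\sinh\lambda)^{-n}. $$
Combined with the value of $c_{n,\lambda}$, this should give $\|S_0^\lambda\|_{\s_2}=1$. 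Once the common scale factor is fixed this way, the isometry up to that factor shows $(S_\mu^\lambda,S_\nu^\lambda)=\delta_{\mu\nu}$ for all $\mu,\nu$, since all inner products are multiplied by the same constant.
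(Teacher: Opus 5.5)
Your overall route is the paper's: $S_\mu^\lambda=\pi_\lambda\circ B_\lambda^{-1}\circ T_\lambda(\zeta_\mu)$ with $T_\lambda$ and $B_\lambda$ unitary. Hence $\{\Psi_\mu^\lambda\}$ is an orthonormal basis of $L^2(\R^{2n})$, and completeness of $\{S_\mu^\lambda\}$ follows from surjectivity of $\pi_\lambda$. You are also right that the Weyl transform as literally defined, $\int f(\xi)\pi_\lambda(\xi)\,d\xi$, only satisfies $\|\pi_\lambda(f)\|_{\s_2}^2=(2\pi)^n|\lambda|^{-n}\|f\|_2^2$. The paper's proof simply asserts that $\pi_\lambda$ is unitary and never confronts this.

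Your plan breaks at the claim that this factor is absorbed by $a_{n,\lambda}$ or $c_{n,\lambda}$. Those constants are exactly what make $T_\lambda$ and $B_\lambda$ isometries, so they cannot also cancel a defect of $\pi_\lambda$. Indeed, $\Psi_0^\lambda=c_{n,\lambda}^{-1/2}p_{1/2}^\lambda=D_\lambda\Phi_0$ already has $L^2$-norm $1$. Now carry out your second option. The displayed relation before the theorem, with $c_0=\pi^{-n/2}$, gives $S_0^\lambda=c_{n,\lambda}^{-1/2}e^{-\frac12H(\lambda)}$. Therefore
\[
\|S_0^\lambda\|_{\s_2}^2=c_{n,\lambda}^{-1}\tr\big(e^{-H(\lambda)}\big)=(4\pi)^n\lambda^{-n}(\sinh\lambda)^n(2\sinh\lambda)^{-n}=(2\pi/\lambda)^n,
\]
which is the Plancherel factor again, not $1$. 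So with the conventions $\pi_\lambda(f)=\int f\,\pi_\lambda$ and $\pi_\lambda(p_t^\lambda)=e^{-tH(\lambda)}$, your argument proves only that the $S_\mu^\lambda$ form an orthogonal basis of common norm $(2\pi/|\lambda|)^{n/2}$.

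To obtain the theorem as stated, you must take the alternative you mentioned but did not pursue: put the constant into the convention for $\pi_\lambda$. That is, take $\pi_\lambda(f)=(2\pi)^{-n/2}|\lambda|^{n/2}\int f(\xi)\pi_\lambda(\xi)\,d\xi$. This is the normalization under which the inversion formula \eqref{weyl-inversion} holds as written, and the one the paper tacitly uses when it calls $\pi_\lambda$ unitary. With it, Step 3 is immediate and your proof coincides with the paper's. The only cost is that identities such as $\pi_\lambda(p_t^\lambda)=e^{-tH(\lambda)}$, and the explicit formula for $S_\mu^\lambda e^{-\frac12H(\lambda)}$, pick up the factor $(2\pi)^{-n/2}|\lambda|^{n/2}$.
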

\begin{proof}  We have proved that  $ T_\lambda : \mathcal{F}(\C^{2n}) \rightarrow \Fs $ is an onto isometry and so  it follows that
$$ \int_{\C^{2n}} T_\lambda F(\zeta)\, \overline{T_\lambda G(\zeta)}\, w_\lambda(\zeta)\, d\zeta = \int_{\C^{2n}}  F(\zeta)\, \overline{ G(\zeta)}\, w_0(\zeta)\, d\zeta.$$
Since $ \zeta_\mu,\, \mu \in \mathbb N^{2n} $ is an orthonormal basis for $ \mathcal{F}(\C^{2n}),$ it is clear that $ T_\lambda \zeta_\mu, \mu \in \mathbb N^{2n} $ forms an orthonormal basis for $ \Fs.$ 
Further, $ B_\lambda : L^2(\R^{2n}) \rightarrow  \Fs $ and $ \pi_\lambda : L^2(\R^{2n}) \rightarrow \mathcal{S}_2 $ are unitary. Hence $ \Psi_\mu^\lambda,\, \mu \in \mathbb N^{2n} $ is an orthonormal basis for $ L^2(\R^{2n}) $ and hence $ S_\mu^\lambda,\, \mu \in \mathbb N^{2n} $ turns out to be an orthonormal basis for $\mathcal{S}_2 .$
\end{proof} 

We proceed to bring out further analogies between Hermite functions and their operator analogues .  
Recall that from the equation $ \Phi_\mu(\xi) e^{-\frac{1}{2}|\xi|^2} = (-1)^{|\mu|}\,c_\mu\, \partial_\xi^\mu e^{-|\xi|^2} $ which defines the normalised Hermite functions $ \Phi_\mu $  on $ \R^{2n},$  we get  the relation 
$$  ( -\frac{\partial}{\partial \xi_j}+\xi_j)\Phi_\mu(\xi) = \sqrt{2\mu_j+2}\, \Phi_{\mu+e_j}(\xi) .$$
We have an analogue of this for the operators $ S_\mu^\lambda.$  Note that as  $ [P_j,Q_j] = I,$ it follows that $ D_j $ is a commuting family of derivations. Applying the derivation $ D_j $ to the defining relation
 $ S_\mu^\lambda e^{-\frac{1}{2}H(\lambda)} = c^\lambda_\mu\, (\lambda \sinh \lambda)^{-|\mu|/2}  \, D^\mu e^{-H(\lambda)}$ we obtain 
$$  D_j S_\mu^\lambda\, e^{-\frac{1}{2}H(\lambda)} + S_\mu^\lambda\, D_j  e^{-\frac{1}{2}H(\lambda)}= \sqrt{\lambda \sinh \lambda}\, \sqrt{2\mu_j+2}\, \,S_{\mu+e_j}^\lambda e^{-\frac{1}{2}H(\lambda)}.$$ 
With  $ M_j= M_j(\lambda) = (D_j  e^{-\frac{1}{2}H(\lambda)}) e^{\frac{1}{2}H(\lambda)},$ we define $ \mathcal{A}_j T = D_j T+ T M_j.$ Then we have 
\begin{equation}\label{create} \mathcal{A}_j S_\mu^\lambda\, = \sqrt{\lambda \sinh \lambda}\, \sqrt{2\mu_j+2}\, \,S_{\mu+e_j}^\lambda .
\end{equation}
We can write down the operators $ M_j $ explicitly by calculating their actions on the Hermite functions $ \Phi_\alpha^\lambda$ on $ \R^n.$ \\

Recall the annihilation and creation operators  $ A_j = \frac{\partial}{\partial \xi_j}+ \lambda \xi_j $ and $ A_j^\ast =-\frac{\partial}{\partial \xi_j}+ \lambda \xi_j .$ Expressing $ P_j $ and $ Q_j $ in terms of $ A_j $ and $A_j^\ast $ we see that
$$  D_j =  \frac{i}{2} (e^{-\lambda/2}\partial_{A_j}- \, e^{\lambda/2}\partial_{A_j^\ast}),\,\,\,\,  D_{j+n} = \frac{1}{2}(e^{\lambda/2}\partial_{A_j^\ast} +\, e^{-\lambda/2}\partial_{A_j}).$$ 
The calculations done in Lemma \ref{lem.2.4} shows that 
$$  \frac{1}{2} e^{-\lambda/2}\, \partial_{A_j}e^{-\frac{1}{2}H(\lambda)}  =  s_{\lambda/2} \, A_j  \,e^{-\frac{1}{2}H(\lambda)},\,\,\, \frac{1}{2} e^{\lambda/2} \,\partial_{A_j^\ast} \,e^{-\frac{1}{2}H(\lambda)} = 
-s_{\lambda/2} \, A_j^\ast  \,e^{-\frac{1}{2}H(\lambda)}.$$
In view of these, it follows that for $ 1 \leq j \leq n,$
$$  D_j e^{-\frac{1}{2}H(\lambda)} = 2i \, s_{\lambda/2} \, \lambda \, Q_j e^{-\frac{1}{2}H(\lambda)},\,\,\, D_{j+n} e^{-\frac{1}{2}H(\lambda)} = 2\,   s_{\lambda/2}\,  P_je^{-\frac{1}{2}H(\lambda)}.$$
This proves that $ M_j = 2i \, s_{\lambda/2}\,  \lambda\,  Q_j $ and $ M_{j+n} = 2\,   s_{\lambda/2}\, P_j.$  Thus we have the following result.

\begin{prop}  For $ 1 \leq j \leq n,$  consider the following operators defined on $ \mathcal{S}_2:$
$$ \mathcal{A}_j T = D_j T+ 2i \, s_{\lambda/2}\,  \lambda\, T Q_j,\,\,\, \mathcal{A}_{j+n} T = D_{j+n} T+ 2\, s_{\lambda/2} \,T P_j.$$
Let $ \mathcal{A}_j ^\ast$ stand for the formal adjoint of $  \mathcal{A}_j $ on $ \mathcal{S}_2.$ Then we have
$$ \mathcal{A}_j S_\mu^\lambda \, = \sqrt{\lambda \sinh \lambda}\, \sqrt{2\mu_j+2}\, S_{\mu+e_j}^\lambda, \,\,\,  \mathcal{A}_j^\ast S_\mu^\lambda \, = \sqrt{\lambda \sinh \lambda}\, \sqrt{2\mu_j}\, S_{\mu-e_j}^\lambda .$$ 
Consequently, $ S_\mu^\lambda $ are eigenvectors of the operator $ \mathcal{H} = \frac{1}{2} \sum_{j=1}^{2n} \left(\mathcal{A}_j\mathcal{A}_j^\ast  +\mathcal{A}_j^\ast \mathcal{A}_j\right)$
  with eigenvalues $ (\lambda \sinh \lambda) \, (2|\mu|+2n).$
\end{prop}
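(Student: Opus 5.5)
The first identity, $\mathcal{A}_j S_\mu^\lambda = \sqrt{\lambda\sinh\lambda}\,\sqrt{2\mu_j+2}\,S_{\mu+e_j}^\lambda$, is already contained in the discussion preceding the proposition. We only need to justify it rigorously. We apply the derivation $D_j$ to the defining relation $S_\mu^\lambda e^{-\frac12 H(\lambda)} = c_\mu^\lambda(\lambda s_\lambda)^{-|\mu|/2} D^\mu e^{-H(\lambda)}$ and use the Leibniz rule. Since the $D_j$ commute (they are linear combinations of $\partial_{A_k}$, $\partial_{A_k^\ast}$, which commute by the remark in Section 2.4), we have $D_jD^\mu = D^{\mu+e_j}$. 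The constant ratio is
\[ c^\lambda_{\mu+e_j}/c^\lambda_\mu = (2\mu_j+2)^{-1/2}, \]
which together with the factor $(\lambda s_\lambda)^{-1/2}$ produces $\sqrt{\lambda s_\lambda}\sqrt{2\mu_j+2}$. We then substitute $D_j e^{-\frac12 H(\lambda)} = M_j e^{-\frac12 H(\lambda)}$ with the explicit $M_j$ computed via Lemma \ref{lem.2.4} and right-multiply by $e^{\frac12 H(\lambda)}$. This last step is legitimate on the finite span of Hermite functions $\Phi_\alpha^\lambda$, since all operators involved map this span to itself. Because both sides are finite combinations that agree there, the identity holds as operators.

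For the adjoint identity I would avoid computing $\mathcal{A}_j^\ast$ explicitly. Instead I would use orthonormality from the preceding theorem. For all $\nu$,
\[ (\mathcal{A}_j^\ast S_\mu^\lambda, S_\nu^\lambda) = (S_\mu^\lambda, \mathcal{A}_j S_\nu^\lambda) = \sqrt{\lambda s_\lambda}\sqrt{2\nu_j+2}\,\delta_{\mu,\nu+e_j}. \]
This is nonzero only when $\nu = \mu - e_j$, where it equals $\sqrt{\lambda s_\lambda}\sqrt{2\mu_j}$. Since $\{S_\nu^\lambda\}$ is an orthonormal basis of $\mathcal{S}_2$, it follows that $\mathcal{A}_j^\ast S_\mu^\lambda = \sqrt{\lambda s_\lambda}\sqrt{2\mu_j}\,S_{\mu-e_j}^\lambda$, with the value $0$ when $\mu_j=0$.

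Here "formal adjoint" should be understood on the dense domain given by the finite span of the $S_\nu^\lambda$. The main point to check is that this span lies in the domain of $\mathcal{A}_j$ as an operator on $\mathcal{S}_2$. The first identity shows that $\mathcal{A}_j$ maps the span into itself, and the adjoint is then defined by the pairing above, so this is consistent. The expected obstacle is precisely this domain issue: $\mathcal{A}_j$ is unbounded, and $\mathcal{A}_j^\ast$ could a priori differ from the naive expression $-D_j^\ast T + T M_j^\ast$. Working only on the span of basis elements sidesteps this, since there $\mathcal{A}_j^\ast$ acts as a weighted shift.

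Finally, we compute
\[ \mathcal{A}_j\mathcal{A}_j^\ast S_\mu^\lambda = \lambda s_\lambda (2\mu_j) S_\mu^\lambda, \qquad \mathcal{A}_j^\ast\mathcal{A}_j S_\mu^\lambda = \lambda s_\lambda(2\mu_j+2)S_\mu^\lambda. \]
Averaging the two and summing over $j=1,\dots,2n$ gives $\mathcal{H}S_\mu^\lambda = \lambda s_\lambda\sum_j(2\mu_j+1)S_\mu^\lambda = (\lambda\sinh\lambda)(2|\mu|+2n)S_\mu^\lambda$.
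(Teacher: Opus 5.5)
Your proposal is correct and follows the paper's proof. The creation identity comes from applying $D_j$ to the defining relation for $S_\mu^\lambda e^{-\frac{1}{2}H(\lambda)}$, as in \eqref{create}. The annihilation identity comes from expanding $\mathcal{A}_j^\ast S_\mu^\lambda$ in the orthonormal basis using $(\mathcal{A}_j^\ast S_\mu^\lambda, S_\nu^\lambda) = (S_\mu^\lambda, \mathcal{A}_j S_\nu^\lambda)$. The eigenvalue of $\mathcal{H}$ then follows by composing the two. Your extra remarks about working on the finite spans of the $\Phi_\alpha^\lambda$ and of the $S_\nu^\lambda$ only make explicit the domain issues that the paper leaves implicit.
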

\begin{proof} The claim about $ \mathcal{A}_j$ follows from \eqref{create}. To prove the same for $ \mathcal{A}_j ^\ast $ consider the expansion of $ \mathcal{A}_j ^\ast S_\mu^\lambda $ in terms of  $ S_{\mu^\prime}^\lambda:$
$$ \mathcal{A}_j ^\ast S_\mu^\lambda = \sum_{\mu^\prime \in \mathbb N^{2n}}  ( \mathcal{A}_j ^\ast S_\mu^\lambda, S_{\mu^\prime}^\lambda)\, S_{\mu^\prime}^\lambda = \sum_{\mu^\prime \in \mathbb N^{2n}}  (S_\mu^\lambda, \mathcal{A}_j S_{\mu^\prime}^\lambda)\, S_{\mu^\prime}^\lambda.$$
The orthonormality of the system $ S_{\mu^\prime}^\lambda, \mu^\prime \in \mathbb N^{2n} $ together with the result for $ \mathcal{A}_j $ proves our claim on $ \mathcal{A}_j^\ast.$ The claim on $ \mathcal{H} $ is a consequence of the result on $\mathcal{A}_j$ and its adjoint.
\end{proof}  

\begin{rem} From the result of Lemma \ref{lem.2.4} we can easily calculate $ S_{e_j}^\lambda,$ where $ e_j $ are the coordinate vectors in $ \R^{2n}.$ Indeed, for $ 1 \leq j \leq n,$ we have
$$ D_j e^{-H(\lambda)} = i \sinh \lambda \left(e^{\lambda/2} A_j + e^{-\lambda/2} A_j^\ast \right) e^{-H(\lambda)},$$
$$D_{j+n}e^{-H(\lambda)} =  \sinh \lambda \left(e^{\lambda/2} A_j - e^{-\lambda/2} A_j^\ast \right) e^{-H(\lambda)}.$$
Thus $ S_{e_j}^\lambda $ are of the form $ P(A_j, A_j^\ast)\, e^{-\frac{1}{2}H(\lambda)} $ where $ P $ is a first degree polynomial in two variables. We can also say that $ S_{e_j}^\lambda $ is a linear combination of  operators of the form $ R_j = A_j e^{-\frac{1}{2}H(\lambda)} $ and $ R_j ^\ast= A_j^\ast e^{-\frac{1}{2}H(\lambda)} .$ Since $ D_j, 1 \leq j \leq 2n $ is a commuting family of derivations we can easily show by iteration that for any $ \mu \in \mathbb N^{2n},$ there exist polynomials $ P_\mu(x,u) $ in $2n$ variables such that $ S_\mu^\lambda = P_\mu( A, A^\ast) e^{-\frac{1}{2}H(\lambda)}$ where $ A = (A_j) $ and $ A^\ast = (A_j^\ast).$
\end{rem}

It can be shown that  $ S_\mu^\lambda = P_\mu^\lambda(A,A^\ast) e^{-\frac{1}{2}H(\lambda)} $  are pseudo-differential operators of order zero and hence  bounded on $ L^p(\R^n) $ for all  $ 1 < p < \infty .$ There is yet another way of seeing this. We can  use a transference theorem of Mauceri \cite{GM}. According to this theorem if  $ M \in B(L^2(\R^n))$  is such that the Weyl multiplier $ \widetilde{T}_M $ defined by $ \pi_\lambda(\widetilde{T}_M f) = \pi_\lambda(f)M $ is bounded on $ L^p(\R^{2n})$ then $ M $ is bounded on $ L^p(\R^n).$   Since $ \pi_\lambda(\Psi_\mu^\lambda) = S_\mu^\lambda,$ it is clear that $ f  \rightarrow f \ast_\lambda \Psi_\mu^\lambda $ is  a right Weyl multiplier. Moreover, this operator is bounded on $ L^p(\R^{2n})$ for all $ 1 \leq p \leq \infty.$ Hence we have the following result.

\begin{prop}  For any $ \mu \in \Na^{2n}, \,S_\mu^\lambda $ is bounded on $ L^p(\R^n)$ for all $ 1 < p < \infty.$ Moreover,  $S_\mu^\lambda $ belongs to the Schatten-von Neumann class $ \mathcal{S}_p$ for any $ 1\leq p < \infty.$
\end{prop}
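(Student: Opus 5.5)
The plan has two parts: first show that $\Psi_\mu^\lambda$ is a nice function (a dilated Hermite function, in particular Schwartz), then handle boundedness on $L^p$ through Mauceri's transference theorem and the Schatten class claim through a factorisation of $S_\mu^\lambda$.

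\textbf{Step 1: identify $\Psi_\mu^\lambda$.} The identity announced in the introduction,
$$\Psi_\mu^\lambda(x,u) = \big((\lambda/2)\coth(\lambda/2)\big)^{n/2}\,\Phi_\mu\big(\sqrt{(\lambda/2)\coth(\lambda/2)}\,(x,u)\big),$$
can be checked by computing $B_\lambda$ of this dilated Hermite function. By the definition, $B_\lambda$ amounts to twisted convolution with $p_{1/2}^\lambda$ followed by division by $p_1^\lambda$. I would not compute each $\Psi_\mu^\lambda$ separately. Instead I would sum against the generating function (\ref{gen-id}) on $\R^{2n}$, so that the whole computation reduces to a single Gaussian integral. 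The target is the generating function of $a_{n,\lambda}\,\zeta_\mu(\sigma_\lambda\zeta)$. Matching the two exponential quadratic forms, using the relation in Remark \ref{reproduce}, identifies the dilation parameter. This Gaussian bookkeeping is routine but messy.

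For the proof of the proposition itself, this exact identification can be avoided. It is enough to know that $\Psi_\mu^\lambda$ is a Schwartz function, which I would deduce from the relation
$$\pi_\lambda(\Psi_\mu^\lambda)\,e^{-\frac{1}{2}H(\lambda)} = \text{const}\cdot\pi_\lambda\big((x,u)_\lambda^\mu\,p_1^\lambda\big).$$
Invert it by writing $\Psi_\mu^\lambda \ast_\lambda p_{1/2}^\lambda$ as a polynomial times a Gaussian. Now invert twisted convolution by $p_{1/2}^\lambda$ on the Hermite side: the operator $e^{\frac{1}{2}H(\lambda)}$ applied to $\pi_\lambda$ of a polynomial times $p_1^\lambda$ is still of the form $\pi_\lambda$ of a polynomial times a Gaussian. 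This uses Proposition 2.1 and the fact that $p_1^\lambda$ has "enough decay" relative to $p_{1/2}^\lambda$, since $\coth\lambda$ and $\coth(\lambda/2)$ give compatible Gaussians. In either route, $\Psi_\mu^\lambda \in \mathcal{S}(\R^{2n}) \subset L^1(\R^{2n})$.

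\textbf{Step 2: boundedness on $L^p(\R^n)$.} Consider the right Weyl multiplier $f \mapsto f \ast_\lambda \Psi_\mu^\lambda$. By $\pi_\lambda(f\ast_\lambda g)=\pi_\lambda(f)\pi_\lambda(g)$ we get $\pi_\lambda(f\ast_\lambda \Psi_\mu^\lambda) = \pi_\lambda(f)S_\mu^\lambda$. Because $|e^{i\frac{\lambda}{2}[\xi,\eta]}|=1$, the twisted convolution is dominated pointwise by $|f|\ast|\Psi_\mu^\lambda|$. Young's inequality then gives boundedness on $L^p(\R^{2n})$ for $1\le p\le\infty$, with norm at most $\|\Psi_\mu^\lambda\|_1$. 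Mauceri's transference theorem \cite{GM} then yields that $S_\mu^\lambda$ is bounded on $L^p(\R^n)$ for $1<p<\infty$. The step I expect to be the main obstacle is checking the hypotheses of the transference theorem precisely, including its normalisation of $\pi_\lambda$ and the side of the multiplication.

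\textbf{Step 3: Schatten class.} I would factor
$$S_\mu^\lambda = \big(S_\mu^\lambda e^{\frac{1}{4}H(\lambda)}\big)\, e^{-\frac{1}{4}H(\lambda)}.$$
The operator $e^{-tH(\lambda)}$ lies in every $\mathcal{S}_p$, since its singular values $e^{-t(2|\alpha|+n)\lambda}$ are $p$-summable for every $p>0$. By the Remark preceding the proposition, $S_\mu^\lambda = P_\mu(A,A^\ast)e^{-\frac{1}{2}H(\lambda)}$. Therefore $S_\mu^\lambda e^{\frac{1}{4}H(\lambda)} = P_\mu(A,A^\ast)e^{-\frac{1}{4}H(\lambda)}$. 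This operator is bounded because each $A_j, A_j^\ast$ raises or lowers the Hermite index with coefficient $O(|\alpha|^{1/2})$, and $|\alpha|^{k/2}e^{-\frac{1}{4}(2|\alpha|+n)\lambda}$ is bounded. Since $\mathcal{S}_p$ is an ideal in $B(L^2(\R^n))$, it follows that $S_\mu^\lambda\in\mathcal{S}_p$ for all $1\le p<\infty$.
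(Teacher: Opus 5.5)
Your proposal is correct. The $L^p$ part is the paper's own argument: Mauceri's transference theorem applied to the right Weyl multiplier $f\mapsto f\ast_\lambda\Psi_\mu^\lambda$. You usefully make explicit what the paper calls clear, namely that $\Psi_\mu^\lambda\in L^1(\R^{2n})$, and that $|f\ast_\lambda g|\le|f|\ast|g|$ together with Young's inequality gives boundedness on every $L^p(\R^{2n})$.

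For $\Psi_\mu^\lambda\in L^1$, rely either on the identification $\Psi_\mu^\lambda=D_\lambda\Phi_\mu$, or on $S_\mu^\lambda=P_\mu(A,A^\ast)e^{-\frac12H(\lambda)}$ combined with $\pi_\lambda(Z_j^\lambda f)=-iA_j\pi_\lambda(f)$ and $\pi_\lambda(\overline{Z}_j^\lambda f)=-iA_j^\ast\pi_\lambda(f)$. The latter shows that $\Psi_\mu^\lambda$ is a polynomial times $p_{1/2}^\lambda$. Your heuristic that $p_1^\lambda$ has ``enough decay'' relative to $p_{1/2}^\lambda$ is not the mechanism: spatially $p_{1/2}^\lambda$ decays faster, since $\coth(\lambda/2)>\coth\lambda$.

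The Schatten part takes a genuinely different route. The paper bounds the diagonal entries $(S_\mu^\lambda\Phi_\alpha^\lambda,\Phi_\alpha^\lambda)$ in the Hermite basis and sums them to conclude trace class. It then invokes interpolation and duality, which really amounts to $\mathcal{S}_1\subset\mathcal{S}_p$.

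You instead factor $S_\mu^\lambda=\bigl(P_\mu(A,A^\ast)e^{-\frac14H(\lambda)}\bigr)e^{-\frac14H(\lambda)}$ and bound the first factor as a finite sum of weighted shifts. You then use that $e^{-\frac14H(\lambda)}$ lies in every $\mathcal{S}_p$, together with the ideal property. This gives all $p>0$ at once.

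Your route is also more robust. Summable diagonal entries in a single basis do not imply trace class for a non-positive operator. The paper's estimate therefore has to be upgraded to $\sum_\alpha\|S_\mu^\lambda\Phi_\alpha^\lambda\|<\infty$ to be conclusive; the same Hermite computation does supply this. Your factorisation needs no such repair.
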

\begin{proof} Since $ S_\mu^\lambda \in \mathcal{S}_2,$ we only need to show that it is of trace class. For, once we know this, we can use interpolation and duality to prove the rest. Using the Hermite basis $ \Phi_\alpha^\lambda, \alpha \in \Na^n,$  we see that $ |( S_\mu^\lambda \Phi_\alpha^\lambda, \Phi_\alpha^\lambda)| = O( (2|\alpha|+n)^{|\mu|/2}) e^{-\frac{1}{2}(2|\alpha+n)|\lambda|}.$ This proves that $ S_\mu^\lambda \in \mathcal{S}_1.$
\end{proof}

\begin{rem} The operators $ P_\mu = P_\mu(A,A^\ast) $ are the operator analogues of the (normalised) Hermite polynomials defined by the relation $ \Phi_\alpha(x) = H_\alpha(x) e^{-\frac{1}{2}|x|^2}.$ Recall that $ H_\alpha, \alpha \in \mathbb N^n,$ forms an orthonormal basis for  $ L^2(\R^n, d\gamma)$ where $  d\gamma(x) = e^{-|x|^2}\,dx $ is the Gaussian measure. Let $ \mathcal{S}_2(\gamma) $ stand for the subspace of  linear operators $ T $ on $ L^2(\R^n) $ such that $ T e^{-\frac{1}{2}H(\lambda)} \in \mathcal{S}_2.$ We equip $ \mathcal{S}_2(\gamma) $ with the inner product 
$$ (T,S)_\gamma =  (T e^{-\frac{1}{2}H(\lambda)}, S e^{-\frac{1}{2}H(\lambda)})= \tr ( e^{-H(\lambda)}  S^\ast T) $$ 
which turns it into a Hilbert space. It is then clear that $ (P_\mu, P_\nu)_\gamma = (S_\mu^\lambda, S_\nu^\lambda) $
and hence $ P_\mu, \mu \in \mathbb N^{2n},$ forms an orthonormal basis for $ \mathcal{S}_2(\gamma).$
\end{rem}

\begin{rem} Let $ L^2_\gamma(\R^{2n}) $ stand for the space of all tempered distributions $ f $ on $ \R^{2n}$ such that  $ f \ast_\lambda p_{1/2}^\lambda \in L^2(\R^{2n}).$ Then it follows that  $ \pi_\lambda : L^2_\gamma(\R^{2n}) \rightarrow  \mathcal{S}_2(\gamma) $ sets up a one-to-one correspondence. Consequently, for any $ f \in L^2_\gamma(\R^{2n}),$ we have the expansion
$$ \pi_\lambda(f) = \sum_{\mu \in \mathbb N^{2n}} (\pi_\lambda(f), P_\mu)\, P_\mu $$
where the series converges in $ \mathcal{S}_2(\gamma).$
\end{rem}

\subsection{Deformed Hermite functions} We defined the operators $ S_\mu^\lambda $ as the Weyl transform $ \pi_\lambda(\Psi_\mu^\lambda)$ and so it would be interesting see how these functions
look like. Recall that $ \Psi_\mu^\lambda $ are defined by the equation 
$$ \Psi_\mu^\lambda \ast_\lambda p_{1/2}^\lambda(x,u) = \pi^{n/2} c_{n,\lambda}^{-1/2}\,\zeta_{\mu}(\sigma_\lambda(x,u))\, p_1^\lambda(x,u).$$
This is the exact analogue of an equation satisfied by the Hermite functions.  Indeed, let $ q_t $ stand for the heat kernel associated to the Laplacian on $ \R^n$ explicitly given by 
$$ q_t(x) = (4\pi t)^{-n/2} \, e^{-\frac{1}{4t}|x|^2} .$$ 
By taking the Fourier transform of the  defining relation for the Hermite functions 
$$   \Phi_\alpha(x) e^{-\frac{1}{2}|x|^2} = (-1)^{|\alpha|}\, c_\alpha\, \partial_\alpha e^{-|x|^2}$$ 
and noting that $ \Phi_\alpha $ are eigenfunctions of the Fourier transform with eigenvalues $ (-i)^{|\alpha|} $ we get the equation
$$ \Phi_\alpha \ast q_{1/2}(\xi) = (2\pi)^{n/2} c_\alpha\, (-\xi)^\alpha q_1(\xi) .$$
Let us try to compute some of the functions $ \Psi_\mu^\lambda $ for small values of $ |\mu|.$\\

For $ \mu = 0 $ we have $ \Psi_0^\lambda(x,u) =  \pi^{n/2} c_{n,\lambda}^{-1/2}\ p_{1/2}^\lambda(x,u) $ because then
$$ \Psi_0^\lambda \ast_\lambda p_{1/2}^\lambda(x,u) = \pi^{n/2} c_{n,\lambda}^{-1/2}\,p_{1/2}^\lambda \ast_\lambda p_{1/2}^\lambda(x,u) = \pi^{n/2}c_{n,\lambda}^{-1/2}\,\, p_1^\lambda(x,u).$$
The first order differential operators $ Z_j^\lambda $ and $ \bar{Z}_j^\lambda $ given in \eqref{Z-j-action} are such that
$$  \pi_\lambda(Z_j^\lambda p_{1/2}^\lambda) = -i A_j e^{-\frac{1}{2}H(\lambda)}, \,\,\pi_\lambda(\bar{Z}_j^\lambda p_{1/2}^\lambda) =-i A_j ^\ast e^{-\frac{1}{2}H(\lambda)}.$$
Since $ \pi_\lambda(\Psi_{e_j}^\lambda) = S_{e_j}^\lambda $ which is explicitly given in the above remark, we see that for $ 1 \leq j \leq n,$
$$ \Psi_{e_j}^\lambda(x,u) = (\sinh \lambda) \left(e^{\lambda/2} Z_j^\lambda + e^{-\lambda/2} \bar{Z}_j^\ast \right) p_{1/2}^\lambda(x,u).$$
And a similar formula holds for $ \Psi_{e_{j+n}}^\lambda(x,u).$ By iteration we can easily prove that there exist polynomials  $ P_\mu $ such that
$ \Psi_\mu^\lambda(x,u) = P_\mu(x,u)\, p_{1/2}^\lambda(x,u).$ It is clear that this family of functions $ \Psi_\mu^\lambda, \mu \in \mathbb N^{2n} $ forms an orthonormal basis for $ L^2(\R^{2n}).$
It turns out that the functions $ \Psi_\mu^\lambda $ are none other than Hermite functions suitably rescaled (or deformed).\\

Let $ \Phi_\mu$ be the standard Hermite functions on $ \R^{2n}$ so that $ B \Phi_\mu = \zeta_\mu.$ Then by definition we have $  (B_\lambda^\ast \circ T_\lambda \circ B) \Phi_\mu = \Psi_\mu^\lambda.$ It turns out that the operator $ B_\lambda^\ast \circ T_\lambda \circ B$ is a constant multiple of a dilation operator.

\begin{prop}\label{tlambda-dlambda} For $ \lambda \neq 0, $ let  $ D_\lambda:L^2(\R^{2n}) \rightarrow L^2(\R^{2n})$ be  defined by $ D_\lambda f(x,u) = (c_{\lambda/2}\delta_\lambda)^{n} f (c_{\lambda/2}\delta_\lambda (x,u)).$ $($Note that $(c_{\lambda/2}\delta_\lambda)^{2} = (\lambda/2)(\coth \lambda/2)).$ Then $ B_\lambda^\ast \circ T_\lambda \circ B = D_\lambda$ so that  $ \Psi_{\mu}^\lambda = D_\lambda \Phi_\mu.$ The functions $ \Psi_\mu^\lambda $ are eigenfunctions of the deformed Hermite operator $ H_\lambda = -\Delta + (\lambda/2\, \coth \lambda/2)^2 |\xi|^2$ on $ \R^{2n} $ with eigenvalues $ (2|\mu|+2n) (\lambda/2\, \coth \lambda/2).$
\end{prop}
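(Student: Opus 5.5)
The plan is to show $B_\lambda \circ D_\lambda = T_\lambda \circ B$ on $L^2(\R^{2n})$. Since every operator involved is unitary (or a constant multiple of one), it is enough to check this on a total family of functions. The most convenient family is the one produced by the generating function identity \eqref{gen-id} on $\R^{2n}$. Concretely, for a parameter $v\in\C^{2n}$ put
$$e_v(\xi)=\sum_{\mu}\overline{\zeta_\mu(v)}\,\Phi_\mu(\xi)=\pi^{-n}e^{\frac14\bar v^2}e^{-\frac12(\xi-\bar v)^2}.$$
Then $B e_v(\zeta)=\sum_\mu \overline{\zeta_\mu(v)}\zeta_\mu(\zeta)=\pi^{-n}e^{\frac12 \zeta\cdot\bar v}$, which is the Fock reproducing kernel. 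Consequently
$$T_\lambda B e_v(\zeta)=a_{n,\lambda}\pi^{-n}e^{\frac12(\sigma_\lambda\zeta)\cdot\bar v}.$$
The task therefore reduces to computing $B_\lambda(D_\lambda e_v)$ and checking that it equals this exponential. Once that holds for all $v$, comparing the coefficients of $\overline{\zeta_\mu(v)}$ gives $B_\lambda D_\lambda\Phi_\mu=T_\lambda\zeta_\mu=B_\lambda\Psi_\mu^\lambda$. Injectivity of $B_\lambda$ then yields $\Psi_\mu^\lambda=D_\lambda\Phi_\mu$. Since both $\{\Phi_\mu\}$ and $\{\Psi_\mu^\lambda\}$ are orthonormal bases, the operator identity $B_\lambda^\ast T_\lambda B=D_\lambda$ follows. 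It is reassuring that $D_\lambda$ is unitary: with $s=c_{\lambda/2}\delta_\lambda$, the factor $s^n$ is exactly $s^{2n/2}$ in dimension $2n$.

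For the computation I would use the formula $B_\lambda f=c_{n,\lambda}\,p_1^\lambda(z,w)^{-1}\, f\ast_\lambda p_{1/2}^\lambda(z,w)$. The function $D_\lambda e_v$ is a Gaussian $e^{-\frac{s^2}{2}|\xi|^2+s\,\xi\cdot\bar v}$ times constants. The kernel $p_{1/2}^\lambda$ is a Gaussian with coefficient $\frac{\lambda}{4}\coth(\lambda/2)=\frac{s^2}{2}$, and this matching of coefficients is exactly why the dilation factor $s^2=(\lambda/2)\coth(\lambda/2)$ appears. The twisted convolution
$$\int e_{v}^{s}(\xi-\eta)\,p_{1/2}^\lambda(\eta)\,e^{i\frac{\lambda}{2}[\xi,\eta]}\,d\eta$$
is then a Gaussian integral in $\eta$. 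It has quadratic part $s^2|\eta|^2$ and a linear part involving $\xi$, $\bar v$ and the symplectic term. Completing the square, and using $\frac{\lambda^2}{4}\cdot\frac{1}{s^2}$-type identities such as $s^2-\frac{\lambda^2}{4s^2}=\frac{\lambda}{2}\cdot\frac{\lambda}{2}\cdot\frac{\coth^2(\lambda/2)-\tanh^2(\lambda/2)}{\ldots}$, which simplify via $\coth(\lambda/2)+\tanh(\lambda/2)=2\coth\lambda$, should reproduce the factor $p_1^\lambda(\xi)$ multiplied by an exponential that is linear in $\xi$ and $\bar v$. After analytic continuation in $\xi\to\zeta=(z,w)$ and division by $p_1^\lambda$, I expect the result to be $\exp\big(\frac12\,\delta_\lambda(c_{\lambda/2}z-is_{\lambda/2}w,\,c_{\lambda/2}w+is_{\lambda/2}z)\cdot\bar v\big)$, up to a constant. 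That constant is fixed by the case $v=0$, equivalently $\mu=0$, where $\Psi_0^\lambda=\pi^{n/2}c_{n,\lambda}^{-1/2}p_{1/2}^\lambda$ is already known. I would check directly that this agrees with $D_\lambda\Phi_0$; the two Gaussians have the same width, and normalization in $L^2$ then forces the constants to match.

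The main obstacle is this Gaussian bookkeeping. In particular, the symplectic term $i\frac{\lambda}{2}[\xi,\eta]$ mixes the $x$ and $u$ variables, and one must see it produce precisely the rotation-like matrix in $\sigma_\lambda$ together with the factor $\delta_\lambda$. If the direct computation becomes unwieldy, an alternative is to verify the intertwining on generators instead. I would check that $D_\lambda$ conjugates the creation operators $-\partial_{\xi_j}+\xi_j$ (which raise $\Phi_\mu$) into operators that $B_\lambda$ transports to multiplication by the $j$-th coordinate of $\sigma_\lambda\zeta$ on $\Fs$. Combined with the $\mu=0$ case, induction on $|\mu|$ then gives the result.

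The eigenfunction statement is routine. $\Phi_\mu$ satisfies $(-\Delta+|\xi|^2)\Phi_\mu=(2|\mu|+2n)\Phi_\mu$ on $\R^{2n}$, and substituting $\xi\mapsto s\xi$ gives $(-\Delta+s^4|\xi|^2)D_\lambda\Phi_\mu=s^2(2|\mu|+2n)D_\lambda\Phi_\mu$ with $s^2=(\lambda/2)\coth(\lambda/2)$.
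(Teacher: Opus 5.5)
Your proof is correct. It reaches the result by a somewhat different route from the paper's.

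\textbf{How the two routes differ.} The paper works with an arbitrary $f$. It writes $(T_\lambda\circ B)f(z,w)$ as the Bargmann integral with kernel evaluated at $(Z,W)=\sigma_\lambda(z,w)$, and substitutes $(x,u)\to c_{\lambda/2}\delta_\lambda(x,u)$ so that $D_\lambda f$ appears. It then shows by Gaussian algebra that the new kernel is $c_{n,\lambda}\,p_1^\lambda(z,w)^{-1}$ times the twisted-convolution kernel of $p_{1/2}^\lambda$. So $T_\lambda\circ B=B_\lambda\circ D_\lambda$ holds as an identity of integral operators, with the constant tracked by hand.

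You instead test the intertwining on the total family of coherent states $e_v$. This reduces everything to one twisted convolution of two Gaussians, and you fix the constant by normalizing at $v=0$. The two computations are the same Gaussian identity seen from opposite sides. The paper's version needs no totality or coefficient-comparison step. Yours avoids all constant bookkeeping, which is a real advantage given how the paper's constants are handled (see the last point below). The eigenfunction part is the same scaling argument in both.

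\textbf{Checking the step you left as an expectation.} I carried out the computation; a few details are worth recording.
\begin{itemize}
\item After completing the square, the quadratic coefficient that appears is $\frac{s^2}{4}+\frac{\lambda^2}{16s^2}$. This is a sum, not the difference in your displayed identity. It equals $\frac{\lambda}{4}\coth\lambda$ by $\coth(\lambda/2)+\tanh(\lambda/2)=2\coth\lambda$.
\item Write $\xi=(x,u)$ and $\bar v=(a,b)$. The linear part is $\frac{s}{2}(a\cdot x+b\cdot u)-\frac{i\lambda}{4s}(a\cdot u-b\cdot x)$. Since $\frac{s}{2}=\frac12\delta_\lambda\cosh(\lambda/2)$ and $\frac{\lambda}{4s}=\frac12\delta_\lambda\sinh(\lambda/2)$, this is exactly $\frac12\,\sigma_\lambda(\xi)\cdot\bar v$.
\item The Gaussian integral produces a factor $e^{\frac14\bar v^2}$, which cancels the $e^{-\frac14\bar v^2}$ in $D_\lambda e_v$. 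This cancellation is what makes your constant independent of $v$, and your $v=0$ normalization depends on that.
\end{itemize}

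\textbf{The normalization at $v=0$.} Use only that $\Psi_0^\lambda$ is a positive multiple of $p_{1/2}^\lambda$ with norm one, together with positivity of $D_\lambda\Phi_0$, as you essentially propose. Do not use the displayed constant. The defining relation with $\zeta_0=\pi^{-n/2}$ gives $\Psi_0^\lambda=c_{n,\lambda}^{-1/2}p_{1/2}^\lambda$. Indeed $\|p_{1/2}^\lambda\|_2^2=c_{n,\lambda}$, so the factor $\pi^{n/2}$ in the paper's formula for $\Psi_0^\lambda$ is a slip. A direct comparison using that formula would show a spurious mismatch.
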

\begin{proof} Recalling the definition of the Bargmann transform on $ L^2(\R^{2n})$ we have
$$ (T_\lambda \circ B)f(z,w) =  \pi^{-n/2} \, \sqrt{c_{n,\lambda}}\, \,e^{\frac{1}{4} (Z^2+W^2)} \int_{\R^{2n}} f(x,u) \,e^{-\frac{1}{2}((x-Z)^2+(u-W)^2)} dx\, du$$
where $ (Z,W) =  \sigma_\lambda(z,w)= \delta_\lambda (c_{\lambda/2}z-i s_{\lambda/2}w, c_{\lambda/2} w+i s_{\lambda/2} z).$ By making the change of variables $ (x,u) \rightarrow c_{\lambda/2}\delta_\lambda (x,u)$ the above integral takes the form
$$ (T_\lambda \circ B)f(z,w) = \pi^{-n/2}  \sqrt{c_{n,\lambda}}\, (c_{\lambda/2}\delta_\lambda)^{n} \,e^{\frac{1}{4} (Z^2+W^2)} \int_{\R^{2n}} D_\lambda f(x,u) \,e^{-\frac{1}{2}((c_{\lambda/2}\delta_\lambda x-Z)^2+(c_{\lambda/2}\delta_\lambda u-W)^2)} dx\, du.$$
A simple calculation  shows that  $ e^{-\frac{1}{2}((c_{\lambda/2}\delta_\lambda x-Z)^2+(c_{\lambda/2}\delta_\lambda u-W)^2)} $ reduces to
$$ e^{-\frac{1}{2}\delta_\lambda^2 c_{\lambda/2}^2 ((x-z)^2+(u-w)^2)} \,  e^{-i c_{\lambda/2} s_{\lambda/2} \delta_\lambda^2 (w(x-z)-z(u-w))}\, e^{\frac{1}{2} \delta_\lambda^2 s_{\lambda/2}^2 (z^2+w^2)}.$$
As $ 2 s_{\lambda/2}c_{\lambda/2} = s_\lambda $ the above integral is nothing but
$$ e^{\frac{1}{2} \delta_\lambda^2 s_{\lambda/2}^2 (z^2+w^2)} \int_{\R^{2n}} D_\lambda f(x,u) \, e^{-\frac{\lambda}{4} (\coth \lambda/2) ((x-z)^2+(u-w)^2)} \,  e^{-i \frac{\lambda}{2} (w \cdot x-z \cdot u)}\,dx\, du.$$
We can also check that 
$$ (Z^2+W^2)+ 2\delta_\lambda^2 s_{\lambda/2}^2 (z^2+w^2) = \frac{\lambda}{\sinh \lambda}( 1+2 s_{\lambda/2}^2) (z^2+w^2)= \lambda (\coth \lambda)(z^2+w^2).$$ 
This proves that
$$ (T_\lambda \circ B)f(z,w) = \pi^{-n/2} (4\pi)^{n}  \sqrt{c_{n,\lambda}}\, (c_{\lambda/2}\delta_\lambda)^{n} \Big(\frac{ \sinh \lambda/2}{\lambda}\Big)^n\, c_{n,\lambda}\,p_1^\lambda(z,w)^{-1} \, D_\lambda f \ast_\lambda p_{1/2}^\lambda(z,w). $$
After simplification the above becomes  $ ( T_\lambda \circ B)f(z,w) =   (B_\lambda \circ D_\lambda)f(z,w)$ proving the result. 
As $ B \Phi_\mu(z,w) = \zeta_\mu(z,w),$ it follows that 
$$ T_\lambda \zeta_\mu(z,w)= 2^{-n}(4\pi)^{n/2} \sqrt{c_{n,\lambda}}\, \zeta_\mu (\sigma_\lambda(z,w)) = c_{n,\lambda}\, \, p_1^\lambda(z,w)^{-1}  D_\lambda \Phi_\mu \ast_\lambda p_{1/2}^\lambda(z,w)$$
which is the defining relation for $ \Psi_\mu^\lambda.$ This proves that $ \Psi_\mu^\lambda = D_\lambda \Phi_\mu.$ Since $ \Phi_\mu $ are eigenfunctions of $ H= -\Delta +|\xi|^2 $ with eigenvalues $ (2|\mu|+2n)$ it follows that $ \Psi_\mu^\lambda = D_\lambda \Phi_\mu $ are eigenfunctions of $ H_\lambda $ as claimed.  
\end{proof}

As we have identified the functions $ \Psi_\mu^\lambda $ as deformed Hermite functions, it is now easy to write down a generating function for the operators $ S_\mu^\lambda.$

\begin{prop}\label{prop-gen-id} The operators $ S_\mu^\lambda $ satisfy the generating function identity
$$\sum_{\mu \in \Na^{2n}}  (-i)^{|\mu|}\,  \zeta_\mu(z,w) S_\mu^\lambda = \pi^{-n/2} c_{n,\lambda}^{-1/2}\,e^{\frac{1}{4} (z^2+w^2)}  G_\lambda(r_\lambda(z,w)) $$
where $ G_\lambda(z,w) = \pi_\lambda(w,-z) e^{-\frac{1}{2}H(\lambda)} \pi_\lambda(-w,z) $ and $  \lambda^2 r_\lambda^2  = (\lambda/2)\,\coth(\lambda/2).$
\end{prop}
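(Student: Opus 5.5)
The idea is to obtain the operator generating function by applying the Weyl transform to a generating function for the functions $\Psi_\mu^\lambda$. By Proposition \ref{tlambda-dlambda} we have $\Psi_\mu^\lambda = D_\lambda \Phi_\mu$ with $S_\mu^\lambda = \pi_\lambda(\Psi_\mu^\lambda)$. So we start from the generating function identity \eqref{gen-id} on $\R^{2n}$, with $w$ replaced by $-i(z,w)$; the factor $(-i)^{|\mu|}$ is produced because $\zeta_\mu(-i\zeta) = (-i)^{|\mu|}\zeta_\mu(\zeta)$. Writing $\xi=(x,u)$ and $\zeta=(z,w)$, this gives
$$ \sum_{\mu} (-i)^{|\mu|}\zeta_\mu(\zeta)\Phi_\mu(\xi) = \pi^{-n}\, e^{-\frac14 \zeta^2}\, e^{-\frac12 (\xi + i\zeta)^2}. $$
We then apply the dilation $D_\lambda$ in the $\xi$ variable. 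Set $a = c_{\lambda/2}\delta_\lambda$, so that $a^2 = (\lambda/2)\coth(\lambda/2)$. The result is
$$ \sum_\mu (-i)^{|\mu|}\zeta_\mu(\zeta)\Psi_\mu^\lambda(\xi) = \pi^{-n} a^{n} e^{-\frac14\zeta^2} e^{-\frac12(a\xi+i\zeta)^2} = \pi^{-n}a^n e^{\frac14 \zeta^2}\, e^{-\frac{a^2}{2}|\xi|^2} e^{-i a\,\xi\cdot\zeta}. $$
The key observation is that $e^{-\frac{a^2}{2}|\xi|^2}$ is a constant multiple of $p_{1/2}^\lambda(\xi)$. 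Indeed, $p_{1/2}^\lambda$ has exponent $-\frac{\lambda}{4}\coth(\lambda/2)|\xi|^2 = -\frac{a^2}{2}|\xi|^2$, and $\pi_\lambda(p_{1/2}^\lambda) = e^{-\frac12 H(\lambda)}$. The series converges in $L^2(\R^{2n})$ for each fixed $\zeta$, since the coefficients $\zeta_\mu(\zeta)$ are square summable (they are values of the reproducing kernel). Because $\pi_\lambda$ is unitary, we may therefore apply $\pi_\lambda$ term by term.

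It remains to compute $\pi_\lambda\big(e^{-ia\,\xi\cdot\zeta}\, p_{1/2}^\lambda\big)$. The modulation by a character must be rewritten as a twisted conjugation. For real $\eta$, the definition \eqref{def:twisted-translation} together with $\pi_\lambda(f\ast_\lambda g)=\pi_\lambda(f)\pi_\lambda(g)$ gives a conjugation identity of the form
$$ \pi_\lambda(\eta)\, \pi_\lambda(g)\, \pi_\lambda(-\eta) = \pi_\lambda\big( e^{i\lambda[\eta,\xi]} g(\xi)\big) $$
(up to sign conventions in the symplectic form). This is a standard computation: write $\pi_\lambda(g)=\int g(\xi)\pi_\lambda(\xi)\,d\xi$ and use $\pi_\lambda(\eta)\pi_\lambda(\xi)\pi_\lambda(-\eta) = e^{i\lambda[\eta,\xi]}\pi_\lambda(\xi)$, which follows from the group law. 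We need $i\lambda[\eta,\xi] = -ia\,\xi\cdot\zeta$. With $[(x,u),(y,v)] = u\cdot y - v\cdot x$, choosing $\eta = \lambda^{-1}a\,(w,-z)$ gives $\lambda[\eta,\xi] = a(-z\cdot x - w\cdot u)$ (up to an overall sign to be checked), which matches $-a\,\xi\cdot\zeta$. So $\eta = r_\lambda(w,-z)$ with $r_\lambda = a/\lambda$, and hence $\lambda^2 r_\lambda^2 = (\lambda/2)\coth(\lambda/2)$, as in the statement. Both sides of the conjugation identity are entire in $\zeta$, because $p_{1/2}^\lambda$ is Gaussian and $\pi_\lambda(\eta)$ extends holomorphically to complex $\eta$ on the span of the Hermite functions. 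By analytic continuation the identity then holds for complex $\zeta$, which yields $G_\lambda(r_\lambda \zeta)$.

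Finally we collect constants. From $p_{1/2}^\lambda = (4\pi)^{-n}\lambda^n (\sinh(\lambda/2))^{-n} e^{-\frac{a^2}{2}|\xi|^2}$ we get $e^{-\frac{a^2}{2}|\xi|^2} = (4\pi)^n\lambda^{-n}(\sinh\lambda/2)^n\, p_{1/2}^\lambda$. Combining this with $\pi^{-n}a^n$ and the relation $a^2 = \lambda c_{\lambda/2}^2/\sinh\lambda$, we must check that the total prefactor equals $\pi^{-n/2}c_{n,\lambda}^{-1/2}$, where $c_{n,\lambda}=(4\pi)^{-n}\lambda^n(\sinh\lambda)^{-n}$. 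The main obstacles are this bookkeeping of constants and the sign conventions in the conjugation identity and the symplectic form, since these determine whether the conjugating element is $(w,-z)$ or $(-w,z)$. I would fix those signs by checking the case $n=1$ at first order in $\zeta$ against the explicit $S_{e_j}^\lambda$ computed in the earlier remark.
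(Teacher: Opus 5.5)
Your proposal is correct and follows the paper's proof: dilate the Hermite generating function \eqref{gen-id} by $D_\lambda$, recognise the Gaussian as a multiple of $p_{1/2}^\lambda$, apply the Weyl transform, and turn the character $e^{-i\lambda r_\lambda \xi\cdot\zeta}$ into conjugation by $\pi_\lambda(r_\lambda(w,-z))$ extended to complex arguments. The differences are cosmetic: you substitute $-i\zeta$ before dilating, and you spell out the $L^2$ convergence and the analytic continuation that the paper leaves implicit; the signs you flagged and the constant $2^n\lambda^{-n/2}(\sinh\lambda)^{n/2}=\pi^{-n/2}c_{n,\lambda}^{-1/2}$ do check out.
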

\begin{proof} From the generating function in   \eqref{gen-id} satisfied by the Hermite functions we obtain
$$  \sum_{\mu \in \Na^{2n}} \Psi_\mu^\lambda(x,u)\, \zeta_\mu(z,w)  = \pi^{-n} (c_{\lambda/2}\delta_\lambda)^n  e^{-\frac{1}{4} ( z^2+w^2)} e^{ c_{\lambda/2}\delta_\lambda(x,u) \cdot (z,w)}\, e^{- \frac{1}{2} (\lambda/2) \coth (\lambda/2) (|x|^2+|u|^2)}.$$
After simplifying the right hand side of the above identity becomes 
$$ 2^n \lambda^{-n/2} (\sinh \lambda)^{n/2}\,e^{-\frac{1}{4} (z^2+w^2)} e^{ c_{\lambda/2}\delta_\lambda(x,u) \cdot (z,w)} \,p_{1/2}^\lambda(x,u).$$
Therefore, the operators $ S_\mu^\lambda $  satisfy the generating function identity
$$\sum_{\mu \in \Na^{2n}}  \zeta_\mu(z,w) S_\mu^\lambda = \pi^{-n/2} c_{n,\lambda}^{-1/2}\,e^{-\frac{1}{4} (z^2+w^2)} \int_{\R^{2n}} e^{ c_{\lambda/2}\delta_\lambda(x,u) \cdot (z,w)} \,p_{1/2}^\lambda(x,u) \pi_\lambda(x,u) dx du.$$
By defining $ \lambda^2\,r_\lambda^2 =  c_{\lambda/2}^2 \, \delta_\lambda^2 = (\lambda/2) \coth (\lambda/2)$ and replacing $ (z,w) $ by $(iz,iw) $ we can rewrite the above as 
$$\sum_{\mu \in \Na^{2n}}  (-i)^{|\mu|}\,  \zeta_\mu(z,w) S_\mu^\lambda = \pi^{-n/2} c_{n,\lambda}^{-1/2}\,e^{\frac{1}{4} (z^2+w^2)} \int_{\R^{2n}} e^{ -i \lambda r_\lambda(x,u) \cdot (z,w)} \,p_{1/2}^\lambda(x,u) \pi_\lambda(x,u) dx du.$$
The operator appearing on the right hand side of the above equation can be written down in a better way. 

Recall that in terms of  the Schr\"odinger representation $ \pi_\lambda $ of the Heisenberg group, we have  $ \pi_\lambda(x,u) = \pi_\lambda(x,u,0),$ and hence an  easy calculation 
shows that
$$ \pi_\lambda(a,b) \pi_\lambda(x,u) \pi_\lambda(-a,-b) = e^{i\lambda (x \cdot b- u \cdot a)} \, \pi_\lambda(x,u).$$
The unitary operators $ \pi_\lambda(a,b) $ can also be defined for complex values of $ a, b $ as unbounded operators. Therefore, from the above identity we obtain
$$ \pi_\lambda(w,-z) \pi_\lambda(x,u) \pi_\lambda(-w, z) = e^{ -i \lambda (x \cdot z+ u \cdot  w)} \, \pi_\lambda(x,u).$$
Consequently, since $ \pi_\lambda(p_{1/2}^\lambda) = e^{-\frac{1}{2}H(\lambda)}$ we have
\begin{equation}\label{glambda} \int_{\R^{2n}} e^{-i\lambda (x\cdot z+u \cdot w)} \,p_{1/2}^\lambda(x,u) \pi_\lambda(x,u) dx du = \pi_\lambda(w,-z) e^{-\frac{1}{2}H(\lambda)} \pi_\lambda(-w,z).
\end{equation}
This suggests that we consider the operator valued function 
$$ G_\lambda(z,w) = \pi_\lambda(w,-z) e^{-\frac{1}{2}H(\lambda)} \pi_\lambda(-w,z) $$
which can be considered as the translate of the operator $ e^{-\frac{1}{2}H(\lambda)}.$ 
In terms of $ G_\lambda $  the generating function identity for the operators $ S_\mu^\lambda $ takes the compact form
$$\sum_{\mu \in \Na^{2n}}  (-i)^{|\mu|}\,  \zeta_\mu(z,w) S_\mu^\lambda = \pi^{-n/2} c_{n,\lambda}^{-1/2}\,e^{\frac{1}{4} (z^2+w^2)}  G_\lambda(r_\lambda(z,w)).$$
This completes the proof of the proposition.
\end{proof}



\subsection{Sobolev spaces and Schwartz class operators} In \cite{KKR} the authors have introduced the operator analogues of Schwartz functions which they call Schwartz operators.  Since a smooth function $ f $ on $ \R^n$  is defined to be Schwartz if $ x^\alpha\, \partial^\beta f $ are bounded for all $ \alpha, \beta \in \mathbb N^n.$  The operator analogue is simply defined by replacing $ x^\alpha $ and $ \partial^\beta $ by $ Q^\alpha $ and $ P^\beta $ where $ Q =(Q_j) $ and $ P=(P_j) $ are the vectors of position and momentum operators. Thus according to \cite{KKR} a bounded linear operator $ T $ acting on  $   L^2(\R^n) $ is said to be Schwartz if $ P^\alpha Q^\beta T P^{\alpha^\prime} Q^{\beta^\prime} $ are all bounded. The class of all Schwartz operators is denoted by $ \mathcal{S}(\mathcal{H}).$ In \cite{KKR} the authors have established several properties  of Schwartz operators.\\

Using the operators $ S_\mu^\lambda $ we can give a simple description of $ \mathcal{S}(L^2(\R^n)) $ and its dual $ \mathcal{S}(L^2(\R^n))^\prime $ consisting of tempered operators. To motivate our definition recall that the Schwartz space $ \mathcal{S}(\R^n) = \cap_{s \geq 0} W_H^s(\R^n),$ the intersection of all the Hermite-Sobolev spaces. Here $ W_H^s( \R^n) $ is the image of $ L^2(\R^n) $ under the fractional power $ H(\lambda)^{-s/2} $ of the Hermite operator. In other words, for $ f \in W_H^s(\R^n), s \geq 0,$ if and only if $ f = H(\lambda)^{-s/2}g $ for some $ g \in L^2(\R^n).$ Equivalently, $ f \in W_H^s(\R^n)$ if and only if 
$$  \|f\|_{(s)}^2 = \, \sum_{\alpha \in \mathbb N^n}  ( (2|\alpha|+n)|\lambda|)^s\, |(f, \Phi_\alpha^\lambda)|^2 < \infty.$$
From this definition it follows that $ f \in \mathcal{S}(\R^n)$ if and only if  for every $ k \in \mathbb N, $ 
$$  | (f, \Phi_\alpha^\lambda)| \leq c_{k} ((2|\alpha|+n)|\lambda|)^{-k} ,\,\,\,  \alpha \in \mathbb N^n.$$
By duality, we also know that  a distribution $ f $  is tempered if and only if 
for some $ k \in \mathbb N, $ we have the estimates
$$  | (f, \Phi_\alpha^\lambda)| \leq c_{k} ((2|\alpha|+n)|\lambda|)^{k} ,\,\,\,  \alpha \in \mathbb N^n.$$
Since  $ S_\mu^\lambda $ are the operator analogues of  $ \Phi_\alpha^\lambda,$ the above motivates the following definitions.\\

For   $ s \geq 0,$ we say that $ T \in W_\mathcal{H}^s $ if and only if 
$$  \|T\|_{(s)}^2 = \, \sum_{\mu \in \mathbb N^{2n}}  ( (2|\mu|+2n)|\lambda|)^s\, |(T, S_\mu^\lambda)|^2 < \infty.$$
When  $ s< 0 $ we define  $W_\mathcal{H}^{-s} $ as the dual of $ W_\mathcal{H}^s.$ Then the space $ \mathcal{S}(L^2(\R^n)) $ defined in \cite{KKR} has the following description.

\begin{thm} We have  the equality $ \mathcal{S}(L^2(\R^n)) = \cap_{s \geq 0} W_\mathcal{H}^s,$ and consequently  $ T \in \mathcal{S}(L^2(\R^n)) $ if and only if
for every $ k \in \mathbb N, $ 
$$  | (T, S_\mu^\lambda)| \leq c_{k} ((2|\mu|+2n)|\lambda|)^{-k} ,\,\,\,  \mu \in \mathbb N^{2n}.$$
If we let $ \mathcal{S}(L^2(\R^n))^\prime $ stand for the dual of  $ \mathcal{S}(L^2(\R^n)),$ then $ \Lambda \in \mathcal{S}(L^2(\R^n))^\prime $ if and only if 
for some $ k \in \mathbb N, $ we have the estimates
$$  | (\Lambda,  S_\mu^\lambda)| \leq c_{k} ((2|\mu|+2n)|\lambda|)^{k} ,\,\,\,  \mu \in \mathbb N^{2n}.$$
\end{thm}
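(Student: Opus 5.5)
The strategy is to transfer everything to $L^2(\R^{2n})$ via the Weyl transform. There the Schwartz space is already characterised by Hermite coefficients, and $S_\mu^\lambda = \pi_\lambda(\Psi_\mu^\lambda) = \pi_\lambda(D_\lambda \Phi_\mu)$ by Proposition \ref{tlambda-dlambda}.

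\emph{Step 1: a function-side description.} Since $\pi_\lambda$ is unitary, $(T,S_\mu^\lambda) = (f, D_\lambda\Phi_\mu)_{L^2(\R^{2n})} = (D_\lambda^{-1} f, \Phi_\mu)$ for $T=\pi_\lambda(f)$. The operator $D_\lambda$ is a unitary dilation that preserves $\mathcal{S}(\R^{2n})$. Hence, by the Hermite coefficient characterisation of $\mathcal{S}(\R^{2n})$ recalled above (applied on $\R^{2n}$ with $\lambda=1$, noting that $2|\mu|+2n$ and $(2|\mu|+2n)|\lambda|$ differ by a fixed constant), the rapid decay of $(T,S_\mu^\lambda)$ is equivalent to $D_\lambda^{-1}f \in \mathcal{S}(\R^{2n})$, i.e.\ to $f\in\mathcal{S}(\R^{2n})$. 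The equality of $\cap_{s\ge0}W^s_{\mathcal H}$ with the decay condition is immediate: summable weighted squares for all $s$ give polynomial decay of each coefficient, and conversely rapid decay makes every weighted sum converge.

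\emph{Step 2: identify Schwartz operators with Weyl transforms of Schwartz functions.} I need $\mathcal{S}(L^2(\R^n)) = \pi_\lambda(\mathcal{S}(\R^{2n}))$. This is the main obstacle. For the inclusion $\supseteq$, use Proposition 2.1 together with the relations $\pi_\lambda(i\widetilde X_j^\lambda f)=\lambda Q_j\pi_\lambda(f)$ and $\pi_\lambda(-\widetilde Y_j^\lambda f)=P_j\pi_\lambda(f)$. Right multiplication by $Q_j,P_j$ can be written as left multiplication plus a commutator, and the commutators are $\pi_\lambda(u_jf)$ and $\pi_\lambda(\lambda x_j f)$ up to constants. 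Iterating, $P^\alpha Q^\beta\pi_\lambda(f)P^{\alpha'}Q^{\beta'} = \pi_\lambda(Lf)$, where $L$ is a differential operator with polynomial coefficients. So $Lf\in\mathcal{S}\subset L^1$, and $\|\pi_\lambda(Lf)\|\le\|Lf\|_1<\infty$.

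For the inclusion $\subseteq$, let $T$ be Schwartz and write $T=\pi_\lambda(f)$; I first need $T\in\mathcal S_2$. For this, write $T = (1+|Q|^2+|P|^2)^{-N}\,(1+|Q|^2+|P|^2)^{N}T$, which is a bounded operator composed with a power of $H(\lambda)^{-1}$. Since $H(\lambda)^{-N}$ lies in $\mathcal S_2$ for large $N$, so does $T$. The same relations now show that $x^\gamma \partial^\delta f$ (via the vector fields $\widetilde X^\lambda,\widetilde Y^\lambda$ and multiplication by $x,u$) corresponds to $P^\alpha Q^\beta T P^{\alpha'} Q^{\beta'}$. Applying the same argument to $(1+|Q|^2+|P|^2)^{M}\,P^\alpha Q^\beta T P^{\alpha'} Q^{\beta'}\,(1+|Q|^2+|P|^2)^{M}$, which is bounded, shows these operators are Hilbert–Schmidt. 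Thus $L f\in L^2(\R^{2n})$ for all polynomial-coefficient operators $L$ generated by $x_j,u_j,\partial_{x_j},\partial_{u_j}$. Note that the twisted fields differ from $\partial$ by polynomial multiples, so they generate the same algebra. By Sobolev embedding, $f\in\mathcal{S}(\R^{2n})$.

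\emph{Step 3: duality.} The dual statement follows from the first part. The map $T\mapsto ((T,S_\mu^\lambda))_\mu$ is a topological isomorphism of $\mathcal{S}(L^2(\R^n))$ onto the space $s$ of rapidly decreasing sequences, which is Fréchet with seminorms $\sup_\mu (2|\mu|+2n)^k|c_\mu|$. The dual of $s$ is the space of polynomially growing sequences, via the pairing $\Lambda(T)=\sum (T,S_\mu^\lambda)\,\Lambda(S_\mu^\lambda)$. Concretely, continuity of $\Lambda$ means $|\Lambda(T)|\le C\|T\|_{(2k)}$ for some $k$, and testing on $T=S_\mu^\lambda$ gives the growth bound. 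Conversely, a growth bound makes the series converge absolutely on $\mathcal S(L^2(\R^n))$ and defines a continuous functional. To be safe, I also need the topology of \cite{KKR} to coincide with that given by the norms $\|\cdot\|_{(s)}$. This should follow from the quantitative form of Step 2: each seminorm $\|P^\alpha Q^\beta T P^{\alpha'}Q^{\beta'}\|$ is dominated by some $\|T\|_{(s)}$, and conversely.
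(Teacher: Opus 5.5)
Your proof is correct. Step 1 is the same as the paper's: $(T,S_\mu^\lambda)=(f,\Psi_\mu^\lambda)$ with $\Psi_\mu^\lambda=D_\lambda\Phi_\mu$, combined with the Hermite-coefficient characterisation of $\mathcal{S}(\R^{2n})$.

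\textbf{Step 2.} This is where you differ. The paper simply quotes \cite{KKR} for $\mathcal{S}(L^2(\R^n))=\pi_\lambda(\mathcal{S}(\R^{2n}))$, while you rederive it from Propositions 2.1 and 2.2. Under $\pi_\lambda$, left and right multiplication by $P_j$ (resp.\ $Q_j$) become, up to constants, $\partial_{u_j}$ (resp.\ $\partial_{x_j}$) plus opposite multiples of $x_j$ (resp.\ $u_j$). Since left and right multiplications commute and $\lambda\neq 0$, the maps $T\mapsto P^\alpha Q^\beta T P^{\alpha'}Q^{\beta'}$ therefore span exactly the polynomial-coefficient differential operators on $\R^{2n}$. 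Together with the factorisation through $(1+H(1))^{-N}\in\mathcal{S}_2$, this makes the argument self-contained.

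Made quantitative, your Step 2 also gives something the paper leaves implicit. It shows that the seminorms $\|P^\alpha Q^\beta TP^{\alpha'}Q^{\beta'}\|$ and the norms $\|T\|_{(s)}$ define the same topology. The duality statement genuinely needs this, and the paper only remarks that the topology ``can be described'' by $\|T\|_{(k)}$, relying on \cite{KKR}.

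\textbf{Step 3.} For the dual, you identify $\mathcal{S}(L^2(\R^n))$ with the sequence space $s$ and use $s'$. The paper instead pulls $\Lambda$ back to the tempered distribution $f\mapsto\Lambda(\pi_\lambda(f))$ and invokes the Hermite characterisation of $\mathcal{S}'(\R^{2n})$. Via $\mathcal{S}(\R^{2n})\cong s$ these are the same argument.

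\textbf{Two small things to tidy up.}
\begin{enumerate}
\item With the paper's convention $P_j=\partial_{\xi_j}$, your weight should be $1+|Q|^2-|P|^2=1+H(1)$, not $1+|Q|^2+|P|^2$.
\item Running the commutation relations backwards for a mere $f\in L^2(\R^{2n})$ needs a brief duality step: pair with $\pi_\lambda(\varphi)$, $\varphi\in\mathcal{S}(\R^{2n})$. Then $Lf$ is first understood distributionally, and the Hilbert--Schmidt bound shows it lies in $L^2$.
\end{enumerate}
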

\begin{proof} In \cite{KKR} the authors have already proved that $ T \in \mathcal{S}(L^2(\R^n))$ if and only if $ T = \pi_\lambda(f) $ where $ f \in \mathcal{S}(\R^{2n}).$  By expanding $ f $ in terms of $ \Psi_\mu^\lambda = D_\lambda \Phi_\mu $  it is easily seen that $ f \in \mathcal{S}(\R^{2n})$ if and only if $ (f, \Psi_\mu^\lambda) $ satisfies the required decay estimates. Since $ (T, S_\mu^\lambda) = (f, \Psi_\mu^\lambda) $ the first part of the theorem follows. We remark that the topology of the space $ \mathcal{S}(\R^{2n}) $ can  be described in terms of the  Hermite-Sobolev norms $ \|f\|_{(k)} $ and hence the topology of $ \mathcal{S}(L^2(\R^n)) $ can also be described in terms of  $ \|T\|_{(k)}.$

Given $ \Lambda \in \mathcal{S}(L^2(\R^n))^\prime $ let us define a  linear functional on $ \mathcal{S}(\R^{2n})$ by the prescription $ \varphi_\Lambda(f) = \Lambda(\pi_\lambda(f)).$ Since $ |\Lambda(T)| \leq C \|T\|_{(m)} $ for some $ m $ and all $ T \in \mathcal{S}(L^2(\R^n)),$ it follows that $ \varphi_\Lambda $ is a tempered distribution. Conversely, to a given tempered distribution $ g $  we can define a functional $ \Lambda_g $  on $ \mathcal{S}(L^2(\R^n))$ by setting $ \Lambda_g(\pi_\lambda(f)) = (g,f).$ As $ g $ is tempered, $\Lambda_g \in  \mathcal{S}(L^2(\R^n))^\prime .$ Thus there is a one to one correspondence between $ \mathcal{S}(L^2(\R^n))^\prime $ and 
$ \mathcal{S}(\R^{2n})^\prime .$ This together with the characterisation of the tempered distributions in terms of their action on Hermite functions complete the proof of the second part of the theorem.
\end{proof}
\subsection{Fourier transform of operators} Recall that in the classical setting the Bargmann transform $ B $ and the Fourier transform on $ L^2(\R^n) $ are related via the unitary operator $ U : \mathcal{F}(\C^n) \rightarrow \mathcal{F}(\C^n) $ defined by $ UF(z) = F(-iz).$ These three operators are related by $ U\circ B = B\circ \mathcal{F} $ and hence the Fourier transform  can also be defined by $ \mathcal{F} = B^\ast \circ U \circ B.$ This motivates us to define  Fourier transform of operators as follows.  The operator $ U : \Fs \rightarrow \Fs $ defined by $ UF(z,w) = F(-iz,-iw) $ is also unitary for any $ \lambda.$ Hence it makes sense to define  the Fourier transform on $ \mathcal{S}_2 $ by following prescription. For any $ T \in \mathcal{S}_2, $ we define $ \widehat{T} $ to be the unique operator in $ \mathcal{S}_2 $ such that $ \mathcal{G}_\lambda(\widehat{T}) =  (U \circ \mathcal{G}_\lambda)(T).$ \\

\begin{thm} The Fourier transform $ \mathcal{F}: \mathcal{S}_2 \rightarrow \mathcal{S}_2, T \rightarrow \widehat{T} $ is a unitary operator. The operators $ S_\mu^\lambda $ are eigenvectors of the Fourier transform: $ \widehat{S}_\mu^\lambda = (-i)^{|\mu|} S_\mu^\lambda.$ Moreover, for any $ T \in \mathcal{S}_2,$ we have
$$ \widehat{T} = \sum_{\mu \in \mathbb N^{2n}}  (-i)^{|\mu|}\, (T, S_\mu^\lambda)\, S_\mu^\lambda.$$
For any $ T \in \mathcal{S}_p, 1\leq p \leq 2$ we have $ \| \widehat{T}\|{p^\prime} \leq \|T\|_{\mathcal{S}_p} $ where $ 1/p + 1/{p^\prime} = 1.$
\end{thm}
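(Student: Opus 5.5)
Unitarity follows from the definition: $\mathcal{G}_\lambda = B_\lambda\circ\pi_\lambda^\ast$ is a composition of unitaries, hence a unitary map $\mathcal{S}_2\to\Fs$. The map $U$ on $\Fs$ is unitary because the weight $w_\lambda$ is invariant under $(z,w)\mapsto(-iz,-iw)$. Indeed, $|(z,w)|^2$ is unchanged, and $\Im(z\cdot\bar w)$ is unchanged since $(-iz)\cdot\overline{(-iw)} = z\cdot\bar w$. The change of variables is a real rotation with Jacobian one, and $U$ is clearly invertible with inverse $F\mapsto F(iz,iw)$. Hence $\mathcal{F}=\mathcal{G}_\lambda^\ast\circ U\circ\mathcal{G}_\lambda$ is unitary on $\mathcal{S}_2$.

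For the eigenvector property, I would compute $\mathcal{G}_\lambda(S_\mu^\lambda)$ directly. Since $S_\mu^\lambda=\pi_\lambda(\Psi_\mu^\lambda)$, we get $\mathcal{G}_\lambda(S_\mu^\lambda)=B_\lambda\Psi_\mu^\lambda=T_\lambda\zeta_\mu = a_{n,\lambda}\,\zeta_\mu\circ\sigma_\lambda$. Because $\sigma_\lambda$ is complex linear, $\sigma_\lambda(-iz,-iw)=-i\,\sigma_\lambda(z,w)$. Since $\zeta_\mu$ is homogeneous of degree $|\mu|$, this gives $U(\zeta_\mu\circ\sigma_\lambda)=(-i)^{|\mu|}\zeta_\mu\circ\sigma_\lambda$. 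Applying $\mathcal{G}_\lambda^\ast$ yields $\widehat{S}_\mu^\lambda=(-i)^{|\mu|}S_\mu^\lambda$. The expansion formula then follows by expanding $T$ in the orthonormal basis $S_\mu^\lambda$ of the previous theorem and using the continuity and linearity of the unitary $\mathcal{F}$.

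For the Hausdorff--Young inequality, I would interpolate between $p=2$ and $p=1$. The case $p=2$ is unitarity, since $\|\widehat T\|_{\mathcal{S}_2}=\|T\|_{\mathcal{S}_2}$. The case $p=1$ requires $\|\widehat T\|_{op}\le\|T\|_{\mathcal{S}_1}$, and this is the main obstacle. Here the plan is to find an explicit formula for $\mathcal{F}$. Using $\mathcal{F}=\mathcal{G}_\lambda^\ast U\mathcal{G}_\lambda$ together with $\mathcal{G}_\lambda=B_\lambda\pi_\lambda^\ast$ and Proposition~\ref{tlambda-dlambda}, the Fourier transform is conjugate to $\pi_\lambda D_\lambda B^\ast U B D_\lambda^{-1}\pi_\lambda^\ast$. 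Since $B^\ast U B$ is the Euclidean Fourier transform on $\R^{2n}$, $\mathcal{F}$ corresponds on the function side to $f\mapsto D_\lambda\,\widehat{(D_\lambda^{-1}f)}$, which is a rescaled Euclidean Fourier transform: $\widehat{f}$ evaluated at $(c_{\lambda/2}\delta_\lambda)^2(x,u)$, with a normalising factor.

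I would then write $T=\pi_\lambda(f)$ and use the inversion formula \eqref{weyl-inversion}, $f(x,u)=(2\pi)^{-n/2}|\lambda|^{n/2}\tr(\pi_\lambda(-x,-u)T)$. This should express $\widehat T$ as
$$\widehat T=c\int_{\R^{2n}} \tr\big(\pi_\lambda(-x,-u)T\big)\, k_\lambda(x,u)\,dx\,du,$$
where $k_\lambda(x,u)$ is an operator-valued kernel of the form $\int e^{-i a_\lambda (x,u)\cdot(y,v)}\pi_\lambda(y,v)\,dy\,dv$. By analogy with \eqref{glambda}, this kernel equals a multiple of a symplectically rotated, rescaled $\pi_\lambda$, so it is essentially unitary. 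The resulting formula is $\widehat T$ equal to a multiple of $\tr(\,\cdot\,T)$ against unitaries.

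From this representation the operator norm is bounded by $C_\lambda\|T\|_{\mathcal{S}_1}$, since $|\tr(UT)|\le\|T\|_{\mathcal{S}_1}$ for unitary $U$. The delicate point is to check that the constant works out to at most one, or else to accept a constant and state the inequality with $C_\lambda$. Once the $p=1$ bound is in hand, complex interpolation between $\mathcal{S}_1\to\mathcal{S}_\infty$ and $\mathcal{S}_2\to\mathcal{S}_2$ (Riesz--Thorin for Schatten classes) gives the claimed estimate for $1\le p\le 2$.
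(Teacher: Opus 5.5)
Your arguments for unitarity, for the expansion of $\widehat{T}$, and for the eigenvector relation are correct. Your eigenvector step is more direct than the paper's. The paper passes to $L^2(\R^{2n})$, quotes from the authors' earlier work that the induced operator is $U_\lambda f=c(\lambda)^n\widehat{f}(c(\lambda)\,\cdot)$, and then uses $\Psi_\mu^\lambda=D_\lambda\Phi_\mu$ together with $\widehat{\Phi}_\mu=(-i)^{|\mu|}\Phi_\mu$. You only need three facts: $\mathcal{G}_\lambda(S_\mu^\lambda)=a_{n,\lambda}\,\zeta_\mu\circ\sigma_\lambda$, complex linearity of $\sigma_\lambda$, and homogeneity of $\zeta_\mu$.

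The gap is in the Hausdorff--Young part, and specifically in your $p=1$ endpoint. Suppose you have $\widehat{T}=c\int_{\R^{2n}}\tr(\pi_\lambda(-x,-u)T)\,k_\lambda(x,u)\,dx\,du$ with $k_\lambda$ a constant multiple of a unitary, as you claim. The pointwise bound $|\tr(\pi_\lambda(-x,-u)T)|\le\|T\|_{\mathcal{S}_1}$ is then integrated over all of $\R^{2n}$, so it gives no finite bound. What you would actually need is
$$\int_{\R^{2n}}|\tr(\pi_\lambda(-x,-u)T)|\,dx\,du\le C\|T\|_{\mathcal{S}_1},$$
that is, $\|f\|_{L^1(\R^{2n})}\le C\|\pi_\lambda(f)\|_{\mathcal{S}_1}$. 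This is false. For the rank one projection $T\psi=(\psi,\varphi)\varphi$, the function $\tr(\pi_\lambda(-x,-u)T)=(\pi_\lambda(-x,-u)\varphi,\varphi)$ is the ambiguity function of $\varphi$, which is not integrable for general $\varphi\in L^2(\R^n)$. You also leave the constant undetermined, and an inequality with $C_\lambda$ is not the statement.

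The missing observation is that you need neither a kernel formula nor interpolation. Schatten norms are nonincreasing in the exponent, so for $1\le p\le 2\le p'$ unitarity of $\mathcal{F}$ gives
$$\|\widehat{T}\|_{\mathcal{S}_{p'}}\le\|\widehat{T}\|_{\mathcal{S}_2}=\|T\|_{\mathcal{S}_2}\le\|T\|_{\mathcal{S}_p}.$$
This is exactly the paper's one-line argument. In particular it gives your $p=1$ endpoint with constant one, after which your interpolation would go through, though it is superfluous.
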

\begin{proof} If we let $ T = \pi_\lambda(f), f \in L^2(\R^{2n}),$ so that $ \mathcal{G}_\lambda(T) = B_\lambda(f) ,$ then $ \mathcal{G}_\lambda(\widehat{T}) = B_\lambda (U_\lambda f)$ where
where $ U_\lambda : L^2(\R^{2n}) \rightarrow L^2(\R^{2n}) $ is characterised by the property
$$ U_\lambda f \ast_\lambda p_{1/2}^\lambda(z,w) =  c_\lambda \, e^{-\frac{\lambda}{2}(z^2+w^2)}\, f \ast_\lambda p_{1/2}^\lambda(-iz,-iw).$$
In \cite{GT} we have shown that $ U_\lambda f(x,u) = c(\lambda)^n \, \widehat{f}(c(\lambda)(x,u)) $ where $ c(\lambda)= (\lambda/2) \coth (\lambda/2).$ Since $ \Phi_\mu $ are eigenfunctions of the Fourier transform and 
$$ \Psi_\mu^\lambda(x,u) = D_\lambda \Phi_\mu(x,u) = \sqrt{c(\lambda)}^n \Phi_\mu(\sqrt{c(\lambda)}(x,u))$$
it follows that $U_\lambda \Psi_\mu^\lambda= (-i)^{|\mu|}\, \Psi_\mu^\lambda. $ This proves that $ S_\mu^\lambda $ are eigenfunctions of $ \mathcal{F}.$ The expansion for $ \widehat{T} $ is immediate. The monotonicity of the Schatten-von Neumann norms proves the $ \mathcal{S}_p -  \mathcal{S}_{p^\prime}$ inequality for the Fourier transform.
\end{proof}

Our definition of the Fourier transform depends on the expansion of $ T $ in terms of $ S_\mu^\lambda .$ It  is preferable to have an integral representation of the Fourier transform at least for some class of operators. Let us review the classical situation in order to motivate our alternate definition. The Hermite functions $ \Phi_\alpha $ on $ \R^n$ are given by the generating function identity 
$$ \sum_{\alpha \in \mathbb N^n} (-i)^{|\alpha|}\, \zeta_\alpha(w)\, \Phi_\alpha(x) = \pi^{-n/2} e^{-\frac{1}{4}w^2} e^{-\frac{1}{2}(x+iw)^2}. $$
Given $ f \in L^2(\R^n) $ let $ F = Bf $ be its Bargmann transform which belongs to the Fock space $ \mathcal{F}(\C^n).$  Since $ (f, \Phi_\alpha) = ( F, \zeta_\alpha) $ we have
$$ \sum_{\alpha \in \mathbb N^n} (-i)^{|\alpha|}\, (f, \Phi_\alpha) \, \Phi_\alpha(x) = \pi^{-n/2}  \int_{\C^n} e^{-\frac{1}{4}\bar{w}^2} e^{-\frac{1}{2}(x-i\bar{w})^2}\, F(w) e^{-\frac{1}{2}|w|^2} \, dw. $$
Since the left hand side is nothing  but the Fourier transform of  $ f \in L^2(\R^n) $ we can take the right hand side as an equivalent definition of the Fourier transform. Note that $ Bf $ makes sense as an entire function for any $ f \in L^1(\R^n) $ and hence we can define the Fourier transform on $ L^1(\R^n) $ by the expression
$$ \widehat{f}(x) = \pi^{-n/2}  \int_{\C^n} e^{-\frac{1}{4}\bar{w}^2} e^{-\frac{1}{2}(x-i\bar{w})^2}\, Bf(w) e^{-\frac{1}{2}|w|^2} \, dw. $$
Writing down the definition of $ Bf $ and simplifying we can verify that the right hand side reduces to the usual definition of  $ \widehat{f}(x).$
In analogy with the above definition of the Fourier transform, we can make use of the generating function identity satisfied by $ S_\mu^\lambda $ to define Fourier transforms of operators.\\

Consider the twisted Gauss-Bargmann transform defined  for $ T \in \mathcal{S}_2 $ by the equation 
$$ \mathcal{G}_\lambda (T)(z,w) = (2\pi)^{-n/2}\, |\lambda|^{n/2}\,  c_{n,\lambda} \,\, p_1^\lambda(z,w)^{-1}\, \tr\left( \pi_\lambda(-z,-w) T e^{-\frac{1}{2}H(\lambda)} \right) $$
which coincides with $ B_\lambda f $ when $ T = \pi_\lambda(f).$  From the generating function identity proved in Proposition \ref{prop-gen-id} we have
\begin{equation}\label{def-g-lambda}
\sum_{\mu \in \Na^{2n}}  (-i)^{|\mu|}\,  \zeta_\mu^\lambda(z,w) S_\mu^\lambda = \pi^{-n/2} c_{n,\lambda}^{-1/2}\,e^{\frac{1}{4} (\lambda/s_\lambda)(z^2+w^2)} \,  \widetilde{G}_\lambda(z,w)
\end{equation}
where we have defined $ \widetilde{G}_\lambda(z,w) =G_\lambda(r_\lambda \sigma_\lambda(z,w)) $ and $ \zeta_\mu^\lambda(z,w)=  T_\lambda\zeta_\mu(z,w) .$  More explicitly, we have 
$$  \widetilde{G}_\lambda(z,w) =G_\lambda( \frac{1}{2} ( \coth (\lambda/2)z- i w, \coth(\lambda/2)w +i z)).$$
Observe that the series in \eqref{def-g-lambda} defining the kernel $ \widetilde{G}_\lambda(z,w) $ converges in $ \mathcal{S}_2 $ and we have
$$\sum_{\mu \in \Na^{2n}}   |  \zeta_\mu^\lambda(z,w)|^2  = K_{(z,w)}^\lambda(z,w) $$
where $ K_\zeta^\lambda(\zeta^\prime)$ is the reproducing kernel for $\Fs.$ Hence from the explicit expression for the reproducing kernel we get  the estimate
\begin{equation}\label{Glambda-esti}
| e^{\frac{1}{4} (\lambda/s_\lambda) (\bar{z}^2+\bar{w}^2)}| \| \widetilde{G}_\lambda(z,w)\|_{\mathcal{S}_2} \leq C_\lambda \, e^{-\frac{\lambda}{2} (u\cdot y- v\cdot x)}\, e^{\frac{1}{4}\lambda (\coth \lambda)(|z|^2+|w|^2)}.
\end{equation}
Thus $ \widetilde{G}_\lambda(z,w) $ is an entire function taking values in $ \mathcal{S}_2$ and it is evident that 
\begin{equation}\label{G-lambda-coeff} 
\pi^{-n/2} c_{n,\lambda}^{-1/2}\,e^{\frac{1}{4} (\lambda/s_\lambda) ({z}^2+{w}^2)} \, \tr \left(\widetilde{G}_\lambda(z,w) (S_\mu^\lambda)^\ast \right) = (-i)^{|\mu|}\, \zeta_\mu^\lambda(z,w).
\end{equation}
Under some additional condition on $ \mathcal{G}_\lambda(T)$ we can now prove the following integral representation for $ \widehat{T}.$

\begin{thm} Assume that $ T \in \mathcal{S}_2 $ is such that $ \mathcal{G}_\lambda(T)(\zeta) \in L^1(\C^{2n}, \sqrt{w_\lambda(\zeta)}\, d\zeta).$ Then
\begin{equation}\label{fourier} \widehat{T}= \pi^{-n/2} c_{n,\lambda}^{-1/2}\,\int_{\C^{2n}} e^{\frac{1}{4} (\lambda/s_\lambda) {\zeta}^2} \,  \mathcal{G}_\lambda(T)(\zeta) \widetilde{G}_\lambda(\zeta)\, w_\lambda(\zeta) \, d\zeta.
\end{equation}
In particular the above representation holds for any Schwartz class operator.
\end{thm}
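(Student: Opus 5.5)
The plan is to check the identity coefficient by coefficient against the orthonormal basis $S_\mu^\lambda$ of $\mathcal{S}_2$. By the previous theorem,
$$ \widehat{T} = \sum_\mu (-i)^{|\mu|} (T,S_\mu^\lambda)\, S_\mu^\lambda .$$
It therefore suffices to show that the right hand side of \eqref{fourier} is a well-defined element of $\mathcal{S}_2$ whose inner product with each $S_\mu^\lambda$ equals $(-i)^{|\mu|}(T,S_\mu^\lambda)$.

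\textbf{Step 1: the integral converges as a Bochner integral in $\mathcal{S}_2$.} Write $\zeta=(z,w)$ with $z=x+iy$ and $w=u+iv$. The estimate \eqref{Glambda-esti} gives a bound on $|e^{\frac14(\lambda/s_\lambda)\zeta^2}|\,\|\widetilde G_\lambda(\zeta)\|_{\mathcal{S}_2}$. The modulus of $e^{\frac14(\lambda/s_\lambda)\zeta^2}$ is the same as that of its conjugate, so the bar in \eqref{Glambda-esti} does not matter. The bound is $C_\lambda e^{-\frac{\lambda}{2}(u\cdot y-v\cdot x)} e^{\frac14\lambda\coth\lambda|\zeta|^2}$. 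Recall that
$$ w_\lambda(\zeta)=4^nc_{n,\lambda}\, e^{-\frac12\lambda\coth\lambda|\zeta|^2}\, e^{\lambda[\Re\zeta,\Im\zeta]} .$$
The bound is therefore a constant multiple of $w_\lambda(\zeta)^{-1/2}$, provided the symplectic term has the matching sign. Checking that sign against the convention $[\xi,\eta]=u\cdot y - v\cdot x$ is a bookkeeping point I expect to need care. Granting it,
$$ \|\text{integrand}\|_{\mathcal{S}_2} \le C\,|\mathcal{G}_\lambda(T)(\zeta)|\,\sqrt{w_\lambda(\zeta)} ,$$
and this is integrable by hypothesis. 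So the integral defines an element $R\in\mathcal{S}_2$.

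\textbf{Step 2: compute the coefficients of $R$.} Because the integral converges in norm, taking $\tr(\,\cdot\,(S_\mu^\lambda)^\ast)$ commutes with it. By \eqref{G-lambda-coeff},
$$ (R,S_\mu^\lambda)=(-i)^{|\mu|}\int_{\C^{2n}} \mathcal{G}_\lambda(T)(\zeta)\,\zeta_\mu^\lambda(\zeta)\, w_\lambda(\zeta)\,d\zeta .$$
Here a conjugation issue appears: to get the $\Fs$ inner product $\langle \mathcal{G}_\lambda(T),\zeta_\mu^\lambda\rangle$ we need $\overline{\zeta_\mu^\lambda}$. I would therefore check that the kernel in \eqref{fourier} is meant with conjugated arguments, as in the classical formula, where $e^{-\frac14\bar w^2}$ appears. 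That is, \eqref{G-lambda-coeff} should be applied at $\bar\zeta$, or equivalently one uses $\overline{\zeta_\mu^\lambda(\zeta)}$. The estimate \eqref{Glambda-esti}, stated with $\bar z,\bar w$, supports this reading. With that interpretation the integral equals $\langle \mathcal{G}_\lambda(T), T_\lambda\zeta_\mu\rangle_{\Fs}$.

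\textbf{Step 3: identify the coefficient.} Write $T=\pi_\lambda(f)$. Then $\mathcal{G}_\lambda(T)=B_\lambda f$ and $T_\lambda\zeta_\mu=B_\lambda\Psi_\mu^\lambda$. Since $B_\lambda$ and $\pi_\lambda$ are unitary,
$$ \langle \mathcal{G}_\lambda(T), T_\lambda\zeta_\mu\rangle=(f,\Psi_\mu^\lambda)=(T,S_\mu^\lambda) .$$
Hence $(R,S_\mu^\lambda)=(-i)^{|\mu|}(T,S_\mu^\lambda)$ for every $\mu$. By completeness of the basis $S_\mu^\lambda$, $R=\widehat T$.

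\textbf{Step 4: Schwartz operators.} Let $T$ be a Schwartz operator. By the characterisation theorem above, $T=\pi_\lambda(f)$ with $f\in\mathcal{S}(\R^{2n})$, and the coefficients $a_\mu=(T,S_\mu^\lambda)$ decay faster than any power of $|\mu|$. Then $\mathcal{G}_\lambda(T)=\sum_\mu a_\mu T_\lambda\zeta_\mu$. Using $T_\lambda\zeta_\mu=a_{n,\lambda}\,\zeta_\mu\circ\sigma_\lambda$ and the change of variables $\zeta\mapsto\sigma_\lambda\zeta$ from the proof that $T_\lambda$ is an isometry, we get $\sqrt{w_\lambda(\zeta)}$ equal to a constant times $e^{-\frac14|\sigma_\lambda\zeta|^2}$. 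So
$$ \int |\mathcal{G}_\lambda(T)|\sqrt{w_\lambda}\,d\zeta \le C\sum_\mu|a_\mu|\int_{\C^{2n}}|\zeta_\mu(\zeta)|\,e^{-\frac14|\zeta|^2}\,d\zeta .$$
Each integral in the sum factorises into one-dimensional Gaussian moments. By Stirling's formula, the ratio $\Gamma(\mu_j/2+\cdot)/\sqrt{\mu_j!}$ grows at most polynomially, up to $2^{|\mu|/2}$-type factors that cancel against the normalisation $2^{-|\mu|/2}$ in $c_\mu$. Hence each integral is $O((1+|\mu|)^N)$ for some fixed $N$, and the rapid decay of $a_\mu$ gives a convergent sum. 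Verifying this polynomial bound on $\|\zeta_\mu\|_{L^1(e^{-|\zeta|^2/4})}$ is the main computational obstacle. If it fails, I would fall back on a cruder bound for the weighted $L^1$ norm: Cauchy--Schwarz against a slightly larger Gaussian weight, combined with the reproducing-kernel bound for entire functions.
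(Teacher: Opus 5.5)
Your Steps 1--3 are the paper's own argument: absolute convergence from \eqref{Glambda-esti} plus the hypothesis, then matching coefficients via \eqref{G-lambda-coeff} and $\langle \mathcal{G}_\lambda(T),\zeta_\mu^\lambda\rangle_{\Fs}=(T,S_\mu^\lambda)$. Both points you left open can be settled.

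\textbf{The sign.} It is right. Since $[\Re\zeta,\Im\zeta]=u\cdot y-v\cdot x$, the bound in \eqref{Glambda-esti} is exactly a constant times $w_\lambda(\zeta)^{-1/2}$; indeed $K_\zeta^\lambda(\zeta)\,w_\lambda(\zeta)$ is constant.

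\textbf{The conjugation.} Your worry is justified, and the paper passes over it. Read literally, $(R,S_\mu^\lambda)$ is the bilinear pairing $\int \mathcal{G}_\lambda(T)\,\zeta_\mu^\lambda\,w_\lambda\,d\zeta$. After the substitution $\eta=\sigma_\lambda\zeta$ this vanishes for $\mu\neq 0$, by averaging over the torus $\eta_j\mapsto e^{i\theta_j}\eta_j$. However, your two proposed fixes are not equivalent. Because $\sigma_\lambda$ has non-real entries, $\overline{\zeta_\mu^\lambda(\zeta)}\neq\zeta_\mu^\lambda(\bar\zeta)$. For example, $\overline{\zeta_{e_1}^\lambda(\bar\zeta)}=\cosh\lambda\,\zeta_{e_1}^\lambda(\zeta)+i\sinh\lambda\,\zeta_{e_{n+1}}^\lambda(\zeta)$, so evaluating \eqref{G-lambda-coeff} at $\bar\zeta$ gives the wrong coefficients. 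The correct kernel is $\sum_\mu(-i)^{|\mu|}\,\overline{\zeta_\mu^\lambda(\zeta)}\,S_\mu^\lambda$. Up to a constant this equals $e^{\frac14(\lambda/s_\lambda)\bar\zeta^2}\,G_\lambda(r_\lambda\overline{\sigma_\lambda\zeta})$, not $\widetilde G_\lambda(\bar\zeta)$. It has the same $\mathcal{S}_2$-norm $K_\zeta^\lambda(\zeta)^{1/2}$, so Step 1 is unaffected and Steps 2--3 go through with it.

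\textbf{Step 4.} This is where you genuinely depart from the paper. The paper uses the pointwise estimate quoted from \cite{RT} for the holomorphic extension of $f\ast_\lambda p_t^\lambda$ with $f\in\mathcal{S}(\R^{2n})$. That estimate gives $|B_\lambda f(\zeta)|\le C_m(1+|\zeta|^2)^{-m}w_\lambda(\zeta)^{-1/2}$ for every $m$, and hence integrability against $\sqrt{w_\lambda}$.

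You instead combine the coefficient characterisation of Schwartz operators with the transport identity $\sqrt{w_\lambda(\zeta)}=C\,e^{-\frac14|\sigma_\lambda\zeta|^2}$. That identity is correct, since $|\sigma_\lambda\zeta|^2=\lambda\coth\lambda\,|\zeta|^2-2\lambda\Im(z\cdot\bar w)$. The moment bound you were unsure about also holds:
$\int_{\C^{2n}}|\zeta_\mu(\eta)|\,e^{-\frac14|\eta|^2}\,d\eta=C\prod_{j=1}^{2n}2^{\mu_j/2}\,\Gamma(\mu_j/2+1)/\sqrt{\mu_j!}=O((1+|\mu|)^{n/2})$,
with each factor growing like $\mu_j^{1/4}$. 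So no fallback is needed. The termwise estimate is legitimate because the expansion of $\mathcal{G}_\lambda(T)$ in $\Fs$ also converges pointwise, via the reproducing kernel.

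Your route stays inside the paper and yields a quantitative criterion. The hypothesis holds whenever $\sum_\mu|(T,S_\mu^\lambda)|(1+|\mu|)^{n/2}<\infty$, for instance for $T\in W_{\mathcal{H}}^s$ with $s>3n$. The paper's route needs the external estimate, but in return gives pointwise decay of $\mathcal{G}_\lambda(T)\sqrt{w_\lambda}$, which is more than the $L^1$ condition requires.
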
 
\begin{proof} From the estimate \eqref{Glambda-esti} and the hypothesis on $ \mathcal{G}_\lambda(T) $ it follows that the integral in \eqref{fourier} converges absolutely. In view of \eqref{G-lambda-coeff} and the fact that 
$$ ( \mathcal{G}_\lambda(T),  \zeta_\mu^\lambda)_{\Fs} =  (T, \mathcal{G}_\lambda^\ast \zeta_\mu^\lambda) = (T, S_\mu^\lambda)_{\mathcal{S}_2}$$
we see that $ (\widehat{T}, S_\mu^\lambda) = (-i)^{|\mu|}\, (T, S_\mu^\lambda)$ which proves the integral representation. For  any $ T \in \mathcal{S}_2 $ we have the estimate
$ |\mathcal{G}_\lambda(T)(\zeta)| \leq C\, w_\lambda(\zeta)^{-1/2} $ which can be improved if $ T = \pi_\lambda(f) $ with $ f \in \mathcal{S}(\R^{2n}).$ In fact it has been proved in \cite{RT}   that for a Schwartz class function one has the estimate
$$ | f \ast_\lambda p_t^\lambda(z,w)| \leq C_{m,\lambda} (1+|z|^2+|w|^2)^{-m} \,e^{-\frac{\lambda}{2} (u\cdot y- v\cdot x)}\, e^{\frac{\lambda}{2} (\coth 2t\lambda)(|y|^2+|v|^2)} $$
for any positive integer $ m.$
(See Theorem 4.4  in \cite{RT};  there is a misprint in the statement: $ \coth 4t $ should read $ \coth 2t $). With these estimates it is easy check that  $ \mathcal{G}_\lambda(T)(\zeta) = B_\lambda f(\zeta) \in L^1(\C^{2n}, \sqrt{w_\lambda(\zeta)}\, d\zeta).$ This completes the proof of the theorem.
\end{proof}

\begin{rem}  Given $ T \in \mathcal{S}_2 $ we can choose a sequence of Schwartz class operators $ T_k $ such that $ T_k \rightarrow T $ in $ \mathcal{S}_2.$  Then $ \widehat{T}_k $ defined by the integral \eqref{fourier} converges to $ \widehat{T} $ in $\mathcal{S}_2.$
\end{rem}

We will now show  that  for the Fourier transfom on $ \mathcal{S}_2 $ we have an analogue of Hardy's theorem.  Recall that in 1933, Hardy \cite{GHH} proved the following theorem in  the one dimensional case.

\begin{thm}   $ f(x) = c \,e^{-\frac{1}{2}|x|^2} $ are the only  functions   satisfying the following estimates:
$$ |f(x)| \leq C e^{-\frac{1}{2}|x|^2},\,\,\,\, |\widehat{f}(\xi)| \leq C e^{-\frac{1}{2}|\xi|^2}.$$
\end{thm}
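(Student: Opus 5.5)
The plan is to prove Hardy's theorem in the one dimensional case (with the normalisation $\widehat{f}(\xi) = (2\pi)^{-1/2}\int f(x) e^{-ix\xi}\,dx$, under which $e^{-\frac{1}{2}x^2}$ is its own Fourier transform) by the classical Phragm\'en--Lindel\"of argument applied to an entire function built from $\widehat f$. First, the bound $|f(x)| \leq C e^{-\frac12 x^2}$ lets us extend $\widehat f$ to an entire function
$$ \widehat f(\zeta) = (2\pi)^{-1/2}\int_{\R} f(x) e^{-ix\zeta}\, dx, \qquad \zeta = \xi+i\eta\in \C . $$
Completing the square in the estimate gives $|\widehat f(\xi+i\eta)| \leq C' e^{\frac12 \eta^2}$. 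So $\widehat f$ is entire of order at most two. On the real line we also have the hypothesis $|\widehat f(\xi)| \leq C e^{-\frac12 \xi^2}$.

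Next, set $g(\zeta) = e^{\frac12 \zeta^2}\widehat f(\zeta)$, which is again entire. Since $\Re \zeta^2 = \xi^2-\eta^2$, we get $|g(\zeta)| \leq C' e^{\frac12\xi^2}$ everywhere, and $|g(\xi)| \leq C$ on the real axis. On the imaginary axis the same two bounds give $|g(i\eta)| \leq C'$. Thus $g$ is bounded on both coordinate axes. In each quadrant it has growth at most $e^{c|\zeta|^2}$, i.e. order two, which is exactly the critical rate for an angle of opening $\pi/2$. So the Phragm\'en--Lindel\"of principle cannot be applied directly, and this is where the work lies.

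The standard way around this, which I would follow, is to rotate slightly. In the first quadrant, take a small $\epsilon>0$ and consider
$$ g_\epsilon(\zeta) = g(\zeta)\exp\left(i\epsilon e^{i\epsilon}\zeta^2\right) $$
on the sector $0\le \arg \zeta \le \pi/2-\epsilon$. On the boundary ray $\arg\zeta = 0$ the extra factor has modulus $e^{-\epsilon\sin\epsilon\, |\zeta|^2}\le 1$. On the ray $\arg\zeta=\pi/2-\epsilon$ it dominates the growth $e^{\frac12\xi^2}$, which is small there since $\xi = |\zeta|\sin\epsilon$; the precise choice of the exponent is to be adjusted so that the product stays bounded. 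The sector now has opening less than $\pi/2$ while $g_\epsilon$ has order two, so Phragm\'en--Lindel\"of gives a uniform bound on $g_\epsilon$ inside the sector. Letting $\epsilon\to0$ shows that $g$ is bounded in the first quadrant. The other quadrants are handled the same way, or by symmetry via $\zeta\mapsto -\zeta,\bar\zeta$. Getting this $\epsilon$-argument to close cleanly is the main obstacle.

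Once $g$ is a bounded entire function, Liouville's theorem makes it constant, so $\widehat f(\xi) = c\, e^{-\frac12\xi^2}$. Fourier inversion then gives $f(x) = c\,e^{-\frac12 x^2}$. Conversely, these functions clearly satisfy both estimates, which completes the characterisation. For the operator analogue that the paper is heading toward, the same structure should transfer via $\Psi^\lambda_\mu = D_\lambda\Phi_\mu$ and $U_\lambda f = c(\lambda)^n\widehat f(c(\lambda)\cdot)$.
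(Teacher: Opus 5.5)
Your proof is correct. It is the classical Phragm\'en--Lindel\"of proof of Hardy's theorem, and it uses the unitary normalisation of $\widehat f$ that the paper also uses implicitly (so that $\mathcal{F}\Phi_\alpha=(-i)^{|\alpha|}\Phi_\alpha$). There is nothing in the paper to compare it with step by step: the paper does not prove this statement, it only recalls it from Hardy's 1933 paper as motivation. The one argument of this kind in the paper is for the operator version, Theorem~\ref{hardy}, and it takes a completely different route. There, Douglas's lemma factors $T$ and $\widehat T$ through $e^{-\frac12 H(\lambda)}$, and then an uncertainty principle on $\mathcal{F}^\lambda(\C^{2n})$ from the authors' earlier work is applied to $\varphi$ and $U\varphi$. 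Your closing remark about transferring the scalar proof via $\Psi^\lambda_\mu=D_\lambda\Phi_\mu$ is not needed for this statement and is not how the paper proceeds. The operator hypotheses are operator inequalities, not pointwise Gaussian bounds on $\pi_\lambda^{-1}(T)$, so it is not clear such a transfer would work.

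The step you flagged as the main obstacle closes with exactly the factor you wrote; no adjustment is needed. For $\zeta=re^{i\theta}$ one has $\Re\bigl(i\epsilon e^{i\epsilon}\zeta^2\bigr)=-\epsilon r^2\sin(2\theta+\epsilon)\le-\epsilon r^2\sin\epsilon$ for $0\le\theta\le\pi/2-\epsilon$. So the factor has modulus at most $1$ on the whole sector, and $g_\epsilon$ still has order at most two there, below the critical order $\pi/(\pi/2-\epsilon)>2$. On the ray $\theta=0$ you get $|g_\epsilon|\le C$. On the ray $\theta=\pi/2-\epsilon$, where $\Re\zeta=r\sin\epsilon$, you get $|g_\epsilon|\le C'e^{r^2\sin\epsilon\,(\frac12\sin\epsilon-\epsilon)}\le C'$. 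Phragm\'en--Lindel\"of then gives $|g_\epsilon|\le\max(C,C')$ on the sector. For fixed $\zeta$ in the open quadrant, $|g(\zeta)|\le\max(C,C')\,e^{\epsilon r^2\sin(2\theta+\epsilon)}\to\max(C,C')$ as $\epsilon\to0$. For the remaining quadrants, the three properties you use are preserved by $g(\zeta)\mapsto g(-\zeta)$ and $g(\zeta)\mapsto\overline{g(\bar\zeta)}$: $g$ is entire, $|g(\zeta)|\le C'e^{\frac12(\Re\zeta)^2}$, and $|g|\le\max(C,C')$ on both axes. So the first-quadrant bound applied to these transforms covers the whole plane, and Liouville and Fourier inversion finish as you say.
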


Here is the exact analogue of Hardy's theorem for the  Fourier transform on operators. 

\begin{thm} \label{hardy} $ T = c \,e^{-\frac{1}{2}H(\lambda)}$  are the only operators in $ \mathcal{S}_2 $  satisfying  the bounds 
$$ T^\ast T \leq C e^{-H(\lambda)},\,\, \widehat{T}^\ast \widehat{T} \leq C e^{-H(\lambda)}.$$ 
\end{thm}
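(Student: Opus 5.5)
Since $T^\ast T \le C e^{-H(\lambda)}$, the operator $R := T e^{\frac12 H(\lambda)}$, defined first on finite linear combinations of the $\Phi_\alpha^\lambda$, satisfies $\|R\varphi\|^2 = (T^\ast T e^{\frac12 H}\varphi, e^{\frac12 H}\varphi) \le C\|\varphi\|^2$. Hence $R$ extends to a bounded operator with $\|R\|\le \sqrt C$, and we get the factorisation $T = R\, e^{-\frac12 H(\lambda)}$. In the same way $\widehat{T} = R' e^{-\frac12 H(\lambda)}$ with $R'$ bounded. I will translate both bounds into growth estimates for $F = \mathcal{G}_\lambda(T)\in\Fs$, transport $F$ to the classical Fock space $\mathcal{F}(\C^{2n})$ via $T_\lambda^{-1}$, and finish with a Liouville-type argument, exactly as in the Fock-space proof of Hardy's theorem.

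\textbf{Step 1: pointwise bound for $\mathcal{G}_\lambda(T)$.} By definition,
$$\mathcal{G}_\lambda(T)(\zeta) = C\, p_1^\lambda(\zeta)^{-1}\tr\big(\pi_\lambda(-\zeta) R\, e^{-H(\lambda)}\big).$$
For complex $\zeta = (z,w)$ I write $\pi_\lambda(-\zeta)$ as a unitary $\pi_\lambda(-\Re\zeta)$ times the operator $\pi_\lambda(-i\Im\zeta)$ up to a scalar phase. I then split $e^{-H} = e^{-\frac12 H}e^{-\frac12 H}$ and use cyclicity of the trace together with $\|AB\|_1\le \|A\|_2\|B\|_2$, so that everything is controlled by $\|\pi_\lambda(-i\Im\zeta) e^{-\frac12 H}\|_{\mathcal{S}_2}$. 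That Hilbert--Schmidt norm is an explicit Gaussian in $\Im\zeta$, computed from \eqref{glambda} or from the kernel of $e^{-\frac12 H}$ (Mehler's formula).

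The upshot I aim for is $|\mathcal{G}_\lambda(T)(\zeta)| \le C\, e^{a|\Im\zeta|^2 + b|\Re \zeta|^2 + \text{symplectic term}}$, where the symplectic term cancels against the factor $e^{\lambda[\Re\zeta,\Im\zeta]}$ in $w_\lambda$. The same argument applied to $\widehat{T}$, using $\mathcal{G}_\lambda(\widehat{T})(\zeta) = \mathcal{G}_\lambda(T)(-i\zeta)$, gives the bound with the roles of $\Re\zeta$ and $\Im\zeta$ interchanged.

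\textbf{Step 2: Liouville argument on the classical Fock space.} Set $F_0 = T_\lambda^{-1}\mathcal{G}_\lambda(T) \in \mathcal{F}(\C^{2n})$, so $F_0(\sigma_\lambda\zeta) = a_{n,\lambda}^{-1}F(\zeta)$. The identity $w_0(\sigma_\lambda\zeta)\propto w_\lambda(\zeta)$ from the deformation proposition converts the two bounds into $|F_0(z)|\le C e^{\frac14 |z|^2}$ together with a sharper, anisotropic pair: one bound small on the real subspace, the other small on the imaginary subspace. The Fourier symmetry $U$ commutes with $\sigma_\lambda$ because $\sigma_\lambda$ is complex linear. Then I consider $G(z) = F_0(z)\,e^{-\frac14 z\cdot z}$ (or the appropriate quadratic correction). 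The two estimates make $G$ bounded on the real and imaginary subspaces and of order two overall. A Phragmén--Lindelöf argument in each complex variable, sector by sector, shows that $G$ is bounded, hence constant by Liouville. Therefore $F_0 = c\, e^{\frac14 z\cdot z}$. This is where the sharp constant $e^{-H(\lambda)}$ matters: any larger Gaussian would leave room for the Hermite-type examples.

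\textbf{Step 3: identification.} Matching with $\mathcal{G}_\lambda(e^{-\frac12 H})$, which corresponds to $\Psi_0^\lambda\propto p_{1/2}^\lambda$ and hence to the constant function in $\Fs$, I identify which $F_0$ corresponds to $T = c\,e^{-\frac12 H}$; alternatively I use the expansion $T=\sum (T,S_\mu^\lambda)S_\mu^\lambda$ and show that all coefficients with $\mu\ne 0$ vanish. Conversely, $e^{-\frac12 H}$ clearly satisfies both bounds, since it is a multiple of $S_0^\lambda$ and therefore equal to its own Fourier transform.

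The main obstacle is getting the constants exactly right in Step 1: the Gaussian exponents have to match precisely so that the Liouville argument in Step 2 closes. A fallback is to prove an intermediate lemma saying that $T^\ast T\le Ce^{-H}$ holds if and only if $|(T\Phi_\alpha^\lambda,\Phi_\beta^\lambda)|$ satisfies suitable bounds. That reduction would turn the problem into a coefficient statement about the $S_\mu^\lambda$, with the Fourier transform acting diagonally on them.
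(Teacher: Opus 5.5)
Your factorisation $T=R\,e^{-\frac12 H(\lambda)}$ with $\|R\|\le\sqrt C$ is correct. It is Douglas' lemma, which is also the paper's first step; the paper then skips pointwise estimates entirely and quotes the uncertainty principle of Theorem~1.4 in \cite{GT} for the pair $\varphi=\mathcal{G}_\lambda(T)$, $U\varphi$.

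The genuine gap is Step~1: the estimate you describe cannot carry any information from the hypothesis. You bound $|\tr(e^{-\frac12 H}\pi_\lambda(-\zeta)\,Re^{-\frac12 H})|$ by $\|e^{-\frac12 H}\pi_\lambda(-\zeta)\|_{\mathcal{S}_2}\|Re^{-\frac12 H}\|_{\mathcal{S}_2}$, which uses only that $T\in\mathcal{S}_2$. Moreover $\|e^{-\frac12 H}\pi_\lambda(-i\eta)\|_{\mathcal{S}_2}^2=\tr(\pi_\lambda(-2i\eta)e^{-H})\propto p_1^\lambda(2i\eta)\propto e^{\lambda\coth\lambda\,|\eta|^2}$. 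So after dividing by $|p_1^\lambda(\zeta)|$ you get exactly $|\mathcal{G}_\lambda(T)(\zeta)|\le C\,w_\lambda(\zeta)^{-1/2}$, the reproducing-kernel bound satisfied by every element of $\Fs$. Both of your bounds therefore hold for every $T\in\mathcal{S}_2$, e.g.\ for every $S_\mu^\lambda$, and no Phragm\'en--Lindel\"of/Liouville argument can follow. This is not a matter of adjusting constants.

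What you need is to use that $R$ is merely bounded, through trace duality. The optimal bound is $\|R\|\,\|e^{-H}\pi_\lambda(-i\eta)\|_{\mathcal{S}_1}$, times the modulus of the phase. This trace norm can be computed: conjugation by $e^{-sH}$ sends $\pi_\lambda(b)$ to $\pi_\lambda$ of a complex point. From this one finds that for $X=e^{-H}\pi_\lambda(-i\eta)$ the operator $X^\ast X$ is a positive multiple of $\pi_\lambda(c)e^{-2H}\pi_\lambda(-c)$ with $c\in\R^{2n}$. Hence $\|X\|_{\mathcal{S}_1}=c_\lambda e^{\frac12\lambda\coth(2\lambda)|\eta|^2}$, an honest gain over $e^{\frac12\lambda\coth\lambda|\eta|^2}$. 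The result is $|\mathcal{G}_\lambda(T)(\zeta)|\le Ce^{Q(\zeta)}$ with $Q(\xi+i\eta)=\frac14\lambda\coth\lambda|\xi|^2+\frac14\lambda\tanh\lambda|\eta|^2-\frac{\lambda}{2}[\xi,\eta]$. This form is positive semidefinite by AM--GM, since $\coth\lambda\tanh\lambda=1$. For $\widehat T$ one gets the bound $Ce^{Q(i\zeta)}$.

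Step~2 is then still misdirected. The null space of $Q$ is $V=\{\Im\zeta=\coth\lambda\,J\Re\zeta\}$ with $J(x,u)=(u,-x)$, and the null space of $Q(i\,\cdot)$ is $iV=\{\Im\zeta=\tanh\lambda\,J\Re\zeta\}$. By contrast, $\sigma_\lambda^{-1}(\R^{2n})$ and $\sigma_\lambda^{-1}(i\R^{2n})$ are the analogous subspaces with $\tanh(\lambda/2)$ and $\coth(\lambda/2)$. Concretely, take $T=\pi_\lambda(a)e^{-\frac12 H}$ with $0\neq a\in\R^{2n}$. It satisfies $T^\ast T=e^{-H}$, yet $\mathcal{G}_\lambda(T)=\mathrm{const}\cdot e^{\ell}$ with $\ell$ complex linear and $\Re\ell$ vanishing on neither $\sigma_\lambda^{-1}(\R^{2n})$ nor $\sigma_\lambda^{-1}(i\R^{2n})$. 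So your $F_0$ is unbounded on both the real and the imaginary subspace: the first hypothesis does not make $F_0$ bounded on either.

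The quadratic correction is also wrong. The extremal $e^{-\frac12 H}$ corresponds to a constant $F_0$, as you note in Step~3, and for it $F_0e^{-\frac14 z\cdot z}$ is unbounded on $i\R^{2n}$. Likewise $c\,e^{\frac14 z\cdot z}\notin\mathcal{F}(\C^{2n})$ unless $c=0$.

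The argument can be repaired:
\begin{enumerate}
\item Take a complex-linear $L$ with $L(\R^{2n})=V$ and $L(i\R^{2n})=iV$. This is possible because $V\cap iV=\{0\}$.
\item Then $G=\mathcal{G}_\lambda(T)\circ L$ satisfies $|G(\xi+i\eta)|\le C\min(e^{q_1(\eta)},e^{q_2(\xi)})$ with $q_1,q_2$ positive definite. This is exactly the Bargmann-side form of the classical Hardy hypotheses.
\item The one-variable Phragm\'en--Lindel\"of argument, applied variable by variable, shows $G$ is constant. Hence $T=c\,e^{-\frac12 H}$, with no quadratic correction.
\end{enumerate}
This yields a self-contained proof at the cost of the trace-norm computation, whereas the paper's proof is short but rests entirely on the cited result of \cite{GT}.
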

\begin{proof} According to a lemma of Douglas \cite{RD} the hypothesis on $ T $ and $ \widehat{T} $ implies that $ T = e^{-\frac{1}{2}H(\lambda)}\, M$  and 
$ \widehat{T} =  e^{-\frac{1}{2}H(\lambda)}\, M^\prime$ where $ M $ and $ M^\prime $ are bounded linear operators. It then follows that $ \varphi =\mathcal{G}_\lambda(T) $ and 
$\psi = \mathcal{G}_\lambda(\widehat{T}) $ both belong to the subspace $ \mathcal{A}^\lambda(\C^{2n}) $ studied in \cite{GT} and define bounded linear operators $ S_\varphi^\lambda $ and 
$ {S}_\psi^\lambda $ on $ \Fs.$  By definition, $ \psi = \mathcal{G}_\lambda(\widehat{T}) =  U \circ \mathcal{G}_\lambda(T)  = U\varphi.$  Then by the uncertainty principle proved in Theorem 1.4 in \cite{GT} we can conclude that $ \varphi $ is a constant and hence $ M = c I.$ This proves the theorem.
\end{proof}

\subsection{Multipliers and Fourier multipliers in the operator setting}In the proof of Theorem \ref{hardy} we have already encountered the operators $ S_\varphi^\lambda$ and $ \widetilde{S}_\psi^\lambda $ acting on the twisted Fock spaces $\Fs.$  It has been proved that the following relations hold:
$$ B_\lambda^\ast \circ S_\varphi^\lambda \circ B_\lambda =  T_M,\,\,\,  \widetilde{S}_\psi^\lambda = U^\ast \circ S_{U\psi}^\lambda \circ U  $$
where $ \varphi = G_\lambda(M), M \in B(L^2(\R^n)) $ and $ T_M $ is the Weyl multiplier operator defined by $ \pi_\lambda(T_Mf) =\pi_\lambda(f) M.$  We refer to \cite{GT} for these relations. Instead of conjugating with $ B_\lambda: L^2(\R^{2n}) \rightarrow \Fs $ we can also conjugate with $ \mathcal{G}_\lambda : \mathcal{S}_2 \rightarrow \Fs $ to get  operators acting  on $ \mathcal{S}_2.$ Consider first the operator 
$  \mathcal{G}_\lambda^\ast \circ S_\varphi^\lambda \circ  \mathcal{G}_\lambda  $ under the assumption that  $ \varphi = G_\lambda(M) $ so that the conjugated operator is bounded on $ \mathcal{S}_2.$
 Using the relation $ \mathcal{G}_\lambda^\ast (S_\varphi^\lambda F) = \mathcal{G}_\lambda^\ast(F)  M$ we can easily check the relation
$$ (\mathcal{G}_\lambda^\ast \circ  S_\varphi^\lambda  \circ \mathcal{G}_\lambda) T = T M.$$
On the other hand, since $ (U \circ \mathcal{G}_\lambda) T = \mathcal{G}_\lambda(\widehat{T}) = (\mathcal{G}_\lambda \circ \mathcal{F})T $ we also have the relation

$$ \mathcal{G}_\lambda^\ast \circ \widetilde{S}_\psi^\lambda  \circ \mathcal{G}_\lambda  = \mathcal{G}_\lambda^\ast \circ U^\ast \circ S_{U\psi}^\lambda \circ U \circ \mathcal{G}_\lambda = \mathcal{F}^\ast \circ \mathcal{G}_\lambda^\ast  \circ S_{U\psi}^\lambda \circ \mathcal{G}_\lambda \circ \mathcal{F}.$$
As a consequence of the above, if we let $ G_\lambda (M) = U \psi ,$ it follows that  the conjugated operator $ \mathcal{G}_\lambda^\ast \circ \widetilde{S}_\psi^\lambda  \circ \mathcal{G}_\lambda $ becomes a Fourier multiplier:
$$  (\mathcal{G}_\lambda^\ast \circ \widetilde{S}_\psi^\lambda  \circ \mathcal{G}_\lambda)T  = \mathcal{F}^\ast ( \widehat{T} M).$$ 
Thus the study of Fourier multipliers on $ \mathcal{S}_2 $ is equivalent to the study of the operators $ \widetilde{S}_\psi^\lambda $ on the twisted Fock spaces $ \Fs.$

\begin{rem}  It is an interesting open problem to find conditions on $ M $ so that the Fourier multiplier operator  $ T \rightarrow \mathcal{F}^\ast ( \widehat{T} M)$ extends to $ \mathcal{S}_p $ as a bounded operator. We plan to address this problem in a subsequent work.
\end{rem}

\subsection{Radial operators and their Fourier transforms}  When $ f \in L^2(\R^n) $ is radial we note that  $ (f, \Phi_\alpha) = 0 $ unless  each $ \alpha_j $ is even. This is due to the fact that the one dimensional Hermite functions $ h_k(x) $ are even (odd) when $ k $ is even (resp. odd). Thus all the nonzero Hermite coefficients in the expansion of $ f$ are of the form $ (f, \Phi_{2\alpha}).$ Moreover,
$$ (f, \Phi_{2\alpha}) = \int_{\R^n} f(y) \Phi_{2\alpha}(y) dy = \int_{\R^n} f(y) \left( \int_{SO(n)} \Phi_{2\alpha}(\sigma y) d\sigma \right) \, dy.$$
Since the inner integral is a radial eigenfunction of the Hermite operator with eigenvalue $ (4|\alpha|+n) $ it follows that it is a constant multiple of the Laguerre function
$$ \psi_k^{n/2-1}(y) = L_k^{n/2-1}(|y|^2)\, e^{-\frac{1}{2}|y|^2} $$
where $ k = |\alpha|.$ Therefore, the Hermite expansion of a radial function takes the form 
$$ f(x) = \sum_{k=0}^\infty R_k(f)\, \Big(\sum_{|\alpha|=k} c_\alpha \Phi_{2 \alpha}(x)\Big),\,\,\, R_k(f) = c_k \,\int_{\R^n} f(y) \psi_k^{n/2-1}(y)\, dy.$$
In analogy with this we say that an operator $ T \in \mathcal{S}_2 $ is radial if the coefficients $ (T, S_\mu^\lambda) = 0 $  unless $ \mu = 2 \nu $  in which case all the coefficients $ (T, S_{2\nu}^\lambda) $ with $ |\nu|=k $  fixed are proportional to each other. Thus  $ (T, S_{2\mu}^\lambda) = c_\mu^\lambda\, R_k^\lambda(T) $ where $ k = |\mu| $ and the expansion of $ T $ takes form
$$ T = \sum_{k=0}^\infty R_k^\lambda(T) \Big( \sum_{|\mu|=k} c_\mu^\lambda \, S_{2\mu}^\lambda \Big).$$
In the case of radial functions on $ \R^n $  it turns out that 
$\sum_{|\alpha|=k} c_\alpha \Phi_{2\alpha}(x) = c_{n,k}\,  \psi_k^{n/2-1}(x) $ and  the constants are related by 
$$ c_{n,k}^2 \,  \int_{\R^n} (\psi_k^{n/2-1}(x))^2\, dx =  \sum_{|\alpha|=k} |c_\alpha|^2  $$

We  are interested in exploring the operator 
$ P_{k,\lambda} =  \sum_{|\mu|=k} c_\mu^\lambda \, S_{2\mu}^\lambda .$ 
Considering the Hermite functions $ \Phi_\mu(x,u) $ on $ \R^{2n} $ we have the following formula 
$$\sum_{|\mu|=k} c_\mu \Psi_{2\mu}^\lambda(x,u) = c_{2n,k}\,  D_\lambda\psi_{k}^{n-1}(x,u) = c_{2n,k}\,  \sqrt{c(\lambda)}^n\, \psi_{k}^{n-1}(\sqrt{c(\lambda)}(x,u))  .$$
Applying the Weyl transform to both sides of the above equation we obtain
$$\sum_{|\mu|=k} c_\mu^\lambda \, S_{2\mu}^\lambda = c_{2n,k}\,  \sqrt{c(\lambda)}^n\, \int_{\R^{2n}} \pi_\lambda(x,u)\, \psi_{k}^{n-1}(\sqrt{c(\lambda)}(x,u))\, dx\, du  .$$
As the function $ \psi_k^{n-1}(x,u) $ is radial, its Weyl transform is a function of the Hermite operator, and hence has an expansion in terms of the spectral projections $ P_k(\lambda) $ associated to $ H(\lambda) $ on $ \R^n.$ This means that there is a function $ m_k $ on $ \mathbb N $ such that
$$\sum_{|\mu|=k} c_\mu^\lambda \, S_{2\mu}^\lambda =  \, \sum_{j=0}^\infty \, m_k((2j+n)\lambda)\, P_j(\lambda).$$
It may be possible to calculate $ m_k $ explicitly so that we have 
$$ P_{k,\lambda} = \sum_{|\mu|=k} c_{\mu}^\lambda \, S_{2\mu}^\lambda = m_k(H(\lambda)).$$

\begin{prop}\label{FT of radial op} An operator $ T \in \mathcal{S}_2 $ is radial if and only of $ T = m(H(\lambda)) $ for some function $ m.$ Moreover, $\widehat{T} $ is also radial whenever $ T $ is radial. 
\end{prop}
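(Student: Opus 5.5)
The plan is to move everything to $L^2(\R^{2n})$ through the Weyl transform and use the identification $\Psi_\mu^\lambda = D_\lambda \Phi_\mu$ from Proposition \ref{tlambda-dlambda}. Write $T = \pi_\lambda(f)$ with $f \in L^2(\R^{2n})$. Since $(T, S_\mu^\lambda) = (f, \Psi_\mu^\lambda) = (D_\lambda^{-1} f, \Phi_\mu)$, with $D_\lambda$ unitary, the coefficient pattern defining a radial operator is exactly the Hermite coefficient pattern of $g = D_\lambda^{-1} f$ on $\R^{2n}$. So the first step is the Euclidean lemma: $g \in L^2(\R^{2n})$ is radial if and only if its Hermite coefficients vanish off $\mu = 2\nu$ and, for each fixed $k = |\nu|$, are proportional to fixed constants $c_\nu$.

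To prove this lemma, average over $SO(2n)$. The averaging operator $\mathcal{R}g = \int_{SO(2n)} g(\sigma\,\cdot)\,d\sigma$ is the orthogonal projection onto radial functions. It commutes with the Hermite operator, so it maps $\Phi_\mu$ to a radial eigenfunction with eigenvalue $2|\mu|+2n$. Such a function is zero when $|\mu|$ is odd, and a multiple $c_\nu'\,\psi_k^{n-1}$ when $\mu = 2\nu$. Hence the radial functions are exactly the closed span of the Laguerre functions $\psi_k^{n-1} = c_{2n,k}^{-1}\sum_{|\nu|=k} c_\nu \Phi_{2\nu}$.

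One subtlety has to be handled here. Some $\Phi_{2\nu}$ are not $SO(2n)$-invariant, yet their coefficient pattern is still of the stated shape. Therefore "proportional" must be read as "proportional with the fixed constants $c_\nu$ of the Laguerre expansion", and I will state this reading explicitly. Because $D_\lambda$ is an isotropic dilation, $g$ is radial if and only if $f$ is. Hence $T$ is radial if and only if $f$ is radial on $\R^{2n}$.

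Next I will show that $f$ is radial if and only if $\pi_\lambda(f) = m(H(\lambda))$. For the forward direction, expand a radial $f$ in the scaled Laguerre functions $\varphi_j^\lambda(x,u) = L_j^{n-1}(\tfrac{\lambda}{2}(|x|^2+|u|^2))\,e^{-\frac{\lambda}{4}(|x|^2+|u|^2)}$. This family is an orthogonal basis for radial $L^2(\R^{2n})$, and its Weyl transforms are constant multiples of the spectral projections $P_j(\lambda)$. This is the standard special Hermite fact, cf. \cite{ST-princeton}, and it is consistent with $\pi_\lambda(p_t^\lambda) = e^{-tH(\lambda)}$. Summing the expansion in $\mathcal{S}_2$ gives $\pi_\lambda(f) = \sum_j m_j P_j(\lambda) = m(H(\lambda))$, with $\sum_j |m_j|^2 \dim P_j < \infty$.

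Conversely, if $T = m(H(\lambda)) = \sum_j m_j P_j(\lambda) \in \mathcal{S}_2$, then $T = \pi_\lambda\bigl(\sum_j m_j\, a_j \varphi_j^\lambda\bigr)$, where the $a_j$ are the normalizing constants. This series converges in $L^2$ by Plancherel for $\pi_\lambda$, and since $\pi_\lambda$ is injective, $f$ equals this radial series.

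For the Fourier transform statement there are two routes. The direct one is from the expansion $\widehat{T} = \sum_\mu (-i)^{|\mu|}(T, S_\mu^\lambda)\, S_\mu^\lambda$. For a radial $T$ only $\mu = 2\nu$ occurs, and then $(-i)^{|2\nu|} = (-1)^k$ is constant on each shell $|\nu| = k$. So the coefficients of $\widehat{T}$ are still supported on even indices and proportional to the same $c_\nu$, which makes $\widehat{T}$ radial. Alternatively, $\widehat{T} = \pi_\lambda(U_\lambda f)$ with $U_\lambda f(x,u) = c(\lambda)^n \widehat{f}(c(\lambda)(x,u))$, and the Euclidean Fourier transform and isotropic dilations both preserve radiality.

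I expect the main obstacle to be the Euclidean lemma in the first step. It requires fixing the precise meaning of "proportional" in the definition of radial operators and then showing that the coefficient pattern holds exactly for radial functions, that is, for the span of the $\psi_k^{n-1}$. The remaining steps are routine bookkeeping once the Laguerre–spectral projection correspondence is quoted.
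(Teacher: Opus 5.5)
Your proposal is correct and follows essentially the same route as the paper. You pass to $f=\pi_\lambda^{\ast}(T)$ via $\Psi_\mu^\lambda=D_\lambda\Phi_\mu$, match the radial coefficient pattern with radial $f$ through $\sum_{|\nu|=k}c_\nu\Phi_{2\nu}=c_{2n,k}\psi_k^{n-1}$, use the correspondence between radial functions and functions of $H(\lambda)$, and get radiality of $\widehat{T}$ from $\widehat{S}_\mu^\lambda=(-i)^{|\mu|}S_\mu^\lambda$.

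You also make two things explicit that the paper leaves implicit. First, ``proportional'' must mean proportional to the fixed Laguerre constants $c_\nu$. Second, you derive the quoted fact from $\pi_\lambda(\varphi_j^\lambda)$ being a multiple of $P_j(\lambda)$.

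There is one small slip in your averaging step: you should also note that $\mathcal{R}\Phi_\mu=0$ when $|\mu|$ is even but some $\mu_j$ is odd. This follows by parity.
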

\begin{proof}  Assuming that  $ T $ is radial we have already shown  that $ T = \pi_\lambda(f) $ where
$$ f(x,u) = \sum_{k=0}^\infty c_{2n,k}\,R_k^\lambda(T)\,  \sqrt{c(\lambda)}^n\, \psi_{k}^{n-1}(\sqrt{c(\lambda)}(x,u))  .$$
It is known that a function $ g $ on $ \R^{2n} $ is radial, if and only $ \pi_\lambda(g) $ is a function of $ H(\lambda).$ Hence it follows that $ T = \pi_\lambda(f) = m(H(\lambda))$ for some function $ m.$
Conversely, if  $ T = m(H(\lambda)),$ so that  $ f = \pi_\lambda^\ast(T) $ is radial, we can expand $ g = D_\lambda^{-1}f $ in terms of $ \psi_k^{n-1} $ leading to 
$$ D_\lambda^{-1}f(x,u) = \sum_{k=0}^\infty  R_k(g)\, \psi_k^{n-1}(x,u) .$$
From the above it follows that the operator $ T $ has  the expansion 
$$ T =  \sum_{k=0}^\infty  c_{2n,k}^{-1}\, R_k(g)\, P_{k,\lambda} = \sum_{k=0}^\infty  \, \sum_{|\mu|=k} c_{2n,k}^{-1}\,R_k(g)\, c_\mu^\lambda \, S_{2\mu}^\lambda.$$
From the above it is now clear that  for any $ \mu, \nu $  with $ |\mu|=|\nu|=k,$ we have $ (T, S_{2\mu}^\lambda) = R_k(g) c_\mu^\lambda $ and  $ (T, S_{2\nu}^\lambda) = R_k(g) c_\nu^\lambda $ so that
$ T $ is radial by our definition.  We have shown that every radial $ T $ has the expansion
$$ T = \sum_{k=0}^\infty R_k^\lambda(T)\, \Big( \sum_{|\mu|=k} c_\mu^\lambda \,  S_{2\mu}^\lambda \Big).$$ 
Since $ S_\mu^\lambda $ are eigenvectors of the Fourier transform with eigenvalues $ (-i)^{|\mu|} $ it is clear that $ \widehat{T} $ is radial whenever $ T $ is radial.
\end{proof}

\begin{rem} Though the Hermite functions $ \Phi_\alpha $ on $ \R^n $ are not radial, there are constants $ c_\alpha $ such that the particular combination $ \sum_{|\alpha|=k} c_\alpha\, \Phi_{2\alpha} $ is radial.
Similarly, the operators $ S_\mu^\lambda $ are not radial for $ \mu \neq 0$ but a suitable combination $ \sum_{|\mu|=k} c_\mu\, S_{2\mu}^\lambda $ is radial.
\end{rem}

The Laguerre functions $\psi_k^{n-1}(x,u) $ form an orthogonal system in $ L^2(\R^{2n}) $ and hence the operators $ P_{k,\lambda} = \pi_\lambda(D_\lambda \psi_k^{n-1}) $ form an orthogonal family of operators in $ \mathcal{S}_2.$ We call them operators of Laguerre type for obvious reasons. Any radial operator $ T $ has an expansion in terms of $ P_{k,\lambda}.$

\begin{rem} Recall the Hecke-Bochner identity for the Fourier transform $ \mathcal{F}_n$ on $ \R^n $ which can be stated as follows. Let  $ f $ be a radial function on $ \R^n $ which can  naturally be treated as radial function on any $ \R^d.$ Let $ P $ be a solid harmonic of degree $ m.$ Then 
$$ \mathcal{F}_n(P f) =  P \, \mathcal{F}_{n+2m}(f).$$
From the work of Geller  \cite{DG} we have an analogue of Hecke-Bochner identity for the Weyl transform acting on $ L^2(\R^{2n}).$  The operator analogues of spherical harmonics turned out to be the Weyl correspondence  of bigraded spherical harmonics, see \cite{DG} and \cite{ST-uncertainty}.  Using  Geller's formula, we can  establish a Hecke-Bochner identity for the Fourier transform on $ \mathcal{S}_2.$ The details of this formula and its connection with certain class one representations of the unitary group $ U(n) $ will be taken up in a forthcoming work. 
\end{rem}



\section*{Acknowledgements}

This work  began in May 2024 when the second author (ST) was visiting IISER, Bhopal. It took shape  slowly  as ST was visiting  Ghent  University and I.I.T, Mumbai and was completed recently in I.I.Sc, Bangalore. ST wishes to thank these institutes for their warm hospitality and  facilities provided. He also wishes to thank Hasan Ali Biswas for checking the manuscript for correctness of constants.


\end{document}